\numberwithin{equation}{section}
\theoremstyle{plain}
\newtheorem{theorem}[subsection]{Theorem}
\newtheorem{proposition}[subsection]{Proposition}
\newtheorem{lemma}[subsection]{Lemma}
\newtheorem{corollary}[subsection]{Corollary}
\theoremstyle{definition}
\newtheorem{definition}[subsection]{Definition}
\newtheorem{remark}[subsection]{Remark}
\newcommand\m{\mathrm{m}}
\newcommand\n{{\mathrm{n}}}
\newcommand\h{\mathrm{h}}
\newcommand\E{\mathbb{E}}
\newcommand\Z{\mathbb{Z}}
\newcommand\R{\mathbb{R}}
\newcommand\C{\mathbb{C}}
\newcommand\N{\mathbb{N}}
\newcommand\eps{\varepsilon}
\renewcommand\mod{{\ \operatorname{mod}\ }}
\begin{document}

\title[Polynomial patterns in the primes]{Polynomial patterns in the primes}

\author{Terence Tao}
\address{UCLA Department of Mathematics, Los Angeles, CA 90095-1596.
}
\email{tao@math.ucla.edu}

\author{Tamar Ziegler}
\address{Einstein Institute of Mathematics,
Edmond J. Safra Campus, Givat Ram 
The Hebrew University of Jerusalem,
Jerusalem, 91904, Israel}
\email{tamarz@math.huji.ac.il}

\thanks{The first author is supported by NSF grant DMS-1266164 and by a Simons Investigator Award. The second author is supported by ISF grant 407/12.}

\subjclass{11B30, 37A15}

\begin{abstract}  Let $P_1,\dots,P_k \colon \Z \to \Z$ be polynomials of degree at most $d$ for some $d \geq 1$, with the degree $d$ coefficients all distinct, and admissible in the sense that for every prime $p$, there exists integers $n,m$ such that $n+P_1(m),\dots,n+P_k(m)$ are all not divisible by $p$.  We show that there exist infinitely many natural numbers $n,m$ such that $n+P_1(m),\dots,n+P_k(m)$ are simultaneously prime, generalizing a previous result of the authors, which was restricted to the special case $P_1(0)=\dots=P_k(0)=0$ (though it allowed for the top degree coefficients to coincide).   Furthermore, we obtain an asymptotic for the number of such prime pairs $n,m$ with $n \leq N$ and $m \leq M$ with $M$ slightly less than $N^{1/d}$.

Our arguments rely on four ingredients.  The first is a (slightly modified) generalized von Neumann theorem of the authors, reducing matters to controlling certain averaged local Gowers norms of (suitable normalizations of) the von Mangoldt function.  The second is a more recent concatenation theorem of the authors, controlling these averaged local Gowers norms by global Gowers norms.  The third ingredient is the work of Green and the authors on linear equations in primes, allowing one to compute these global Gowers norms for the normalized von Mangoldt functions.  Finally, we use the Conlon-Fox-Zhao densification approach to the transference principle to combine the preceding three ingredients together.

In the special case $P_1(0)=\dots=P_k(0)=0$, our methods also give infinitely many $n,m$ with $n+P_1(m),\dots,n+P_k(m)$ in a specified set primes of positive relative density $\delta$, with $m$ bounded by $\log^L n$ for some $L$ independent of the density $\delta$.  This improves slightly on a result from our previous paper, in which $L$ was allowed to depend on $\delta$.
\end{abstract}

\maketitle

\section{Introduction}

In 2008, Green and the first author \cite{gt-primes} established that the primes contain arbitrarily long arithmetic progressions.  Equivalently, one has

\begin{theorem}\label{gt}  Let $k$ be a natural number (i.e. an element of $\N = \{1,2,3,\dots\}$).  Then there exist infinitely many natural numbers $n,m$ such that $n, n+m, \dots,n+(k-1)m$ are all prime.
\end{theorem}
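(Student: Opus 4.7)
The plan is to reduce Theorem \ref{gt} to Szemer\'edi's theorem via a transference principle, so that arithmetic progressions in the primes can be found by treating a suitable normalization of the von Mangoldt function as a bounded function in a sufficiently pseudorandom measure space. Concretely, I would first fix a large prime $N$ and a suitable $w$-trick: replace the von Mangoldt function $\Lambda$ by $\Lambda_{b,W}(n) := \frac{\phi(W)}{W}\Lambda(Wn+b)$, where $W = \prod_{p \leq w} p$ and $b$ is coprime to $W$. This eliminates the local obstructions at small primes and produces a function of average size $\approx 1$, for which one hopes to find progressions of length $k$ by a direct density argument. The task then becomes: show that for each fixed $k$, any bounded nonnegative function $f$ of positive mean, dominated pointwise by a sufficiently pseudorandom majorant $\nu$, contains many $k$-term progressions $n, n+m, \ldots, n+(k-1)m$.

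The second step is to construct the majorant $\nu$. The classical construction is a truncated Selberg sieve majorant for the primes in arithmetic progression $Wn+b$; one shows $\Lambda_{b,W}(n) \leq C \nu(n)$ for some constant $C$ depending only on $k$. The key properties that $\nu$ must satisfy are the \emph{linear forms condition} and the \emph{correlation condition} of Green--Tao, both of which are verified by computing sieve-weighted correlations using standard contour integration / residue calculations for products of local factors. These conditions together imply that $\nu - 1$ has small Gowers $U^{k-1}$ norm in the ultra-high sense needed for the transference principle.

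The third and main step is the transference principle itself: given $0 \leq f \leq \nu$ with $\E_n f(n) \geq \delta > 0$, produce a bounded function $\tilde f$ with $0 \leq \tilde f \leq 1+o(1)$ and $\E_n \tilde f(n) \geq \delta$ such that $\|f - \tilde f\|_{U^{k-1}} = o(1)$. Then a generalized von Neumann theorem (controlling the count of $k$-APs by $U^{k-1}$ norms when one factor is dominated by $\nu$) shows that $f$ and $\tilde f$ have essentially the same $k$-AP count, and Szemer\'edi's theorem provides a positive lower bound for $\tilde f$'s $k$-AP count. Applying this with $f := c \Lambda_{b,W} \mathbf{1}_{[\eps N, N]}$ yields many $k$-APs in the primes, and pigeonholing over $b$ finishes the proof.

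The main obstacle is the construction and verification of pseudorandomness of $\nu$: one must choose the sieve cutoff $R$ carefully (a small power of $N$) and control a large number of correlation integrals uniformly. Equivalently, in the modern formulation, one needs the dense model theorem / Hahn--Banach argument underlying the transference principle, whose delicate point is that the dual of the $U^{k-1}$ seminorm is hard to describe explicitly; one gets around this by a soft functional-analytic argument, producing $\tilde f$ as a suitable projection. Once $\nu$ is built and transference is in place, the Szemer\'edi step is a black box, and the $w$-trick handles the admissibility issues at small primes automatically, so no analogue of the admissibility hypothesis from the abstract is needed for genuine arithmetic progressions.
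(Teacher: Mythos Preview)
Your outline is correct and matches the original Green--Tao argument that the paper cites as \cite{gt-primes}: the $W$-trick, a sieve-theoretic pseudorandom majorant $\nu$, the linear forms and correlation conditions, the transference/dense model step, the generalized von Neumann theorem, and finally Szemer\'edi's theorem as a black box. The paper itself does not reprove Theorem~\ref{gt} but simply attributes it to \cite{gt-primes} and lists exactly these ingredients.

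It is worth noting, though, that the paper also points out a second route: Theorem~\ref{gt} is an immediate corollary of Theorem~\ref{linear-pattern} (take $P_i(\m)=(i-1)\m$ and check $\prod_p\beta_p>0$). That argument replaces the Szemer\'edi black box by the much deeper input of the inverse Gowers theorem \cite{gtz-uk} and the M\"obius--nilsequences orthogonality \cite{gt-mobius}, which together show $\|\Lambda'_{b,W}-1\|_{U^{k-1}}=o(1)$ directly; one then gets an exact asymptotic for the progression count rather than merely a positive lower bound. Your approach buys a softer proof with a weaker conclusion; the asymptotic route buys a precise count at the cost of the full GT--GTZ machinery.
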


Among the ingredients of the proof of \cite{gt-primes} was the deployment of the (global) Gowers uniformity norms introduced in \cite{gow-szem-4}, \cite{gow-szem}, the development of a \emph{generalized von Neumann theorem} controlling certain multiple averages by these norms, and Szemer\'edi's theorem \cite{szem} on arithmetic progressions.

Theorem \ref{gt} was generalized by the authors \cite{tz-pattern} (see also \cite{hoang-wolf} for a subsequent refinement):

\begin{theorem}\label{tz}  Let $P_1,\dots,P_k \in \Z[\m]$ be polynomials in one indeterminate $\m$ such that $P_1(0)=\dots=P_k(0)=0$.  Then there exist infinitely many natural numbers $n,m$ such that $n+P_1(m), n+P_2(m), \dots,n+P_k(m)$ are all prime.
\end{theorem}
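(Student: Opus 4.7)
The plan is to follow the Green--Tao template from Theorem \ref{gt}, replacing the linear pattern $n, n+m, \dots, n+(k-1)m$ by the polynomial pattern $n+P_1(m), \dots, n+P_k(m)$. The first move is the \emph{$W$-trick}: fix $W = \prod_{p \le w} p$ for a slowly growing parameter $w$, pick residues $b$ (with $\gcd(b,W)=1$) and $b'$ appropriately so that the normalized von Mangoldt function $\Lambda_{b,W}(n) := \frac{\phi(W)}{W}\Lambda(Wn+b)$ has mean close to $1$, and rewrite $n+P_i(m)$ after substituting $n \mapsto Wn+b$, $m \mapsto Wm+b'$. Because $P_i(0)=0$ and the $P_i$ have integer coefficients, the shifted polynomials still take integer values and satisfy the relevant admissibility conditions after this substitution; this removes the local obstructions at small primes.

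Second, I would prove a \emph{generalized von Neumann theorem} for polynomial patterns: the average
\[
\E_{n \le N} \E_{m \le M} \prod_{i=1}^k f_i\bigl(n+P_i(m)\bigr)
\]
is controlled, up to negligible error, by a Gowers-type uniformity norm of any one of the factors $f_i$. For the linear case this is van der Corput plus Cauchy--Schwarz; for polynomial patterns of differing degrees one instead runs the \emph{PET induction} of Bergelson--Leibman, repeatedly applying van der Corput / Cauchy--Schwarz in $m$ to reduce the complexity of the polynomial system, until one is left with a multilinear average bounded by a standard Gowers norm $\|f_i\|_{U^{s+1}[N]}$ for some $s$ depending on $P_1,\dots,P_k$. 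The hypothesis $P_i(0)=0$ is what keeps the PET induction clean, since the zero polynomial remains in the system throughout.

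Third, I would apply the \emph{transference principle}. Using a Goldston--Pintz--Y\i ld\i r\i m truncated divisor sum one constructs a pseudorandom majorant $\nu \ge \Lambda_{b,W}$ satisfying the linear forms and correlation conditions of \cite{gt-primes}; the transference principle then replaces $\Lambda_{b,W}$ by a function bounded by a pseudorandom measure. Combining the generalized von Neumann theorem with transference, the problem reduces to a \emph{polynomial Szemer\'edi theorem}: any subset of $[N]$ of positive density contains a pattern $n+P_1(m), \dots, n+P_k(m)$ with $m \ge 1$. This is exactly the Bergelson--Leibman theorem (or its quantitative finitary versions); a Varnavides-type averaging gives the required lower bound on the count, and the main term dominates the transference error, yielding infinitely many prime patterns.

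The main obstacle in carrying this out is the Gowers-uniformity step needed to complete the generalized von Neumann reduction: one must show that $\Lambda_{b,W} - 1$ is small in $U^{s+1}[N]$ for the value of $s$ produced by PET induction. For polynomials of large degree this $s$ can be substantial, so one genuinely needs the full Green--Tao--Ziegler machinery (the inverse theorem for the Gowers norms and Mobius--nilsequence estimates, i.e.\ the work on linear equations in primes) rather than the more elementary $U^2$ or $U^3$ estimates that suffice for short progressions. The remaining delicate point is keeping uniform control of all error terms as the polynomials vary over the $m$-variable, which is where a careful quantitative version of PET induction must be executed.
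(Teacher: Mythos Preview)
Your broad outline---$W$-trick, a PET-induction-based generalized von Neumann theorem, transference to a dense model, and then Bergelson--Leibman---matches the structure of the proof in \cite{tz-pattern} that the paper is summarizing. But two concrete points in your write-up are wrong, and they are exactly the places where the polynomial case genuinely differs from the linear one.

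First, PET induction applied to the pattern $n+P_1(m),\dots,n+P_k(m)$ does \emph{not} terminate in a global Gowers norm $\|f_i\|_{U^{s+1}[N]}$. Each van der Corput step introduces auxiliary shift parameters, and what one ends up with is an \emph{averaged local} box norm of the shape
\[
\E_{\vec h}\, \|f\|_{\Box^{D}_{P_1(\vec h)[-M,M],\dots,P_D(\vec h)[-M,M]}},
\]
where the generators of the box norm depend polynomially on the parameters $\vec h$. This is precisely what the paper emphasizes (``the global Gowers uniformity norms used in \cite{gt-primes} were replaced by more complicated averaged local Gowers uniformity norms''), and handling these norms---constructing a majorant $\nu$ that obeys a \emph{polynomial forms condition} rather than merely the linear-forms and correlation conditions of \cite{gt-primes}, and proving a dense model theorem adapted to them---is the main technical content of \cite{tz-pattern}. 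Your assertion that one lands on a standard $U^{s+1}[N]$ norm glosses over the actual difficulty. (Controlling these averaged local norms by global Gowers norms is possible, but that is the concatenation machinery of the \emph{present} paper, not of \cite{tz-pattern}, and it comes with its own restrictions on $M$.)

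Second, your ``main obstacle'' paragraph is misplaced. For the qualitative statement of Theorem \ref{tz} one does \emph{not} need to show that $\Lambda_{b,W}-1$ is small in any Gowers-type norm, and the proof in \cite{tz-pattern} predates and does not use the Green--Tao--Ziegler inverse theorem. The logic is: transference gives a bounded dense model $f'$ of (a scaling of) $\Lambda_{b,W}$, close in the relevant averaged local norm; the generalized von Neumann theorem transfers the polynomial average from $\Lambda_{b,W}$ to $f'$; and then one invokes Bergelson--Leibman on $f'$ to get a positive lower bound. Gowers uniformity of $\Lambda_{b,W}-1$ is what you would need for an \emph{asymptotic} (as in Theorem \ref{main}), not for mere existence. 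Note also that, as the paper points out, one must use the \emph{multidimensional} Bergelson--Leibman theorem to obtain the required uniformity of the recurrence bound in the $W$ parameter; the one-dimensional version you cite does not immediately suffice.
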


The proof of Theorem \ref{tz} broadly followed the arguments used to prove Theorem \ref{gt}; for instance, a generalized von Neumann theorem continued to play a crucial role.  However, there are some key differences between the two arguments.  Most notably, the global Gowers uniformity norms used in \cite{gt-primes} were replaced by more complicated \emph{averaged local Gowers uniformity norms}, and the Szemer\'edi theorem was replaced with the multidimensional polynomial recurrence theorem of Bergelson and Leibman \cite{bl}.  It was necessary to deploy the multidimensional version of this theorem (despite the one-dimensional nature of Theorem \ref{tz}) in order to obtain some uniformity in the recurrence theorem with respect to a certain technical parameter $W$ that arose in the proof.  In \cite{tz-pattern}, the natural numbers $n,m$ could be chosen so that $m = O(n^{o(1)})$ as $n \to \infty$; in \cite{tz-narrow} this was improved to $m = O(\log^{O(1)} n)$ (with the implied constants depending on $P_1,\dots,P_k$).  In \cite{le-wolf} it was also shown that $m$ could be taken to be one less than a prime.

In a series of papers \cite{gt-linear}, \cite{gt-mobius}, \cite{gtz-uk}, a different generalization of Theorem \ref{gt} was obtained, namely that the arithmetic progression $n,n+r,\dots,n+(k-1)r$ was replaced by a more general pattern, and a more quantitative count of the prime patterns was established.   We give a special case of the main results of that paper as follows.  If $A$ is a finite non-empty set, we use $|A|$ to denote its cardinality, and for any function $f \colon A \to \C$, we write $\E_{a \in A} f(a)$ for $\frac{1}{|A|} \sum_{a \in A} f(a)$.  For any $N$, we let $[N]$ denote the discrete interval
$$[N] \coloneqq  \{ n \in \N: n \leq N \},$$
and let $\Lambda \colon \Z \to \R$ denote the von Mangoldt function (thus $\Lambda(p^j) \coloneqq  \log p$ for all primes $p$ and natural numbers $j$, with $\Lambda(n)=0$ otherwise).  All sums and products over $p$ are understood to be restricted to the primes unless otherwise specified.  We will be using $O()$ and $o()$ asymptotic notation; we review our conventions for this in Section \ref{notation-sec} below.

\begin{theorem}\label{linear-pattern}  Let $P_1,\dots,P_k \in \Z[\m]$ be linear polynomials (thus $P_i(\m) = a_i \m + b_i$ for some integers $a_i,b_i$, for each $i=1,\dots,k$).  Assume that the leading coefficients $a_1,\dots,a_k$ of the polynomials $P_1,\dots,P_k$ are distinct.  Let $N$ be an asymptotic parameter going to infinity, and let $M = M(N)$ be a quantity such that $M = o_{N \to \infty}( N)$ and $M \gg N \log^{-A} N$ for some fixed constant $A$.  Then
$$
\E_{n \in [N]} \E_{m \in [M]} \Lambda(n+P_1(m)) \dots \Lambda(n+P_k(m)) = \prod_p \beta_p + o_{N \to \infty}(1) $$
where for each prime $p$, $\beta_p$ is the local factor
$$ \beta_p \coloneqq  \E_{n \in \Z/p\Z} \E_{m \in \Z/p\Z} \Lambda_p(n+P_1(m)) \dots \Lambda_p(n+P_k(m))$$
and $\Lambda_p \colon \Z/p\Z \to \R$ is the function $\Lambda_p(n) \coloneqq  \frac{p}{p-1} 1_{n \neq 0 \mod  p}$.
\end{theorem}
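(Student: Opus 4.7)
The plan is to follow the three-step strategy developed in \cite{gt-linear, gt-mobius, gtz-uk} for linear patterns in the primes: the \emph{$W$-trick} to isolate local obstructions, a \emph{generalized von Neumann theorem} to reduce the count to a Gowers-norm estimate, and an inverse/M\"obius input to execute that estimate.

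I would start with the $W$-trick: fix $w = w(N) \to \infty$ sufficiently slowly, set $W := \prod_{p \leq w} p$, and decompose the averages over $n \in [N]$, $m \in [M]$ according to residues $n \equiv b$ and $m \equiv c$ modulo $W$. With the normalized von Mangoldt function $\tilde\Lambda_a(n') := (\phi(W)/W)\,\Lambda(Wn'+a)$, each factor $\Lambda(n+P_i(m))$ becomes $\tilde\Lambda_{b+a_i c + b_i}(n' + a_i m')$ (up to the negligible prime-power contribution and a harmless rescaling of the intervals); residues $(b,c)$ for which any $b + a_i c + b_i$ shares a factor with $W$ contribute nothing to the main term.

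Next, since the $a_i$ are distinct, the system $\{n' + a_i m'\}_{i=1}^k$ has finite Cauchy--Schwarz complexity $s \leq k-2$, and the generalized von Neumann theorem of \cite{gt-linear} gives
$$\left|\E_{n',m'} \prod_{i=1}^{k} f_i(n' + a_i m')\right| \ll \min_{i} \|f_i\|_{U^{s+1}} + o(1)$$
for $1$-bounded $f_i$. Coupled with a dense-model decomposition---either the transference/majorant approach of \cite{gt-primes} or, more efficiently, the Conlon--Fox--Zhao densification mentioned above---this reduces the count to (i) the main term with each $\tilde\Lambda_{b+a_i c + b_i}$ replaced by its mean $1$, plus (ii) errors controlled by $\|\tilde\Lambda_a - 1\|_{U^{s+1}[N/W]}$. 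The main term (i), once summed over the allowed residues modulo $W$ via the Chinese remainder theorem, is exactly the truncated Euler product $\prod_{p \leq w}\beta_p$; the distinctness of the $a_i$ ensures $\beta_p = 1 + O_k(1/p^2)$ for large $p$, so sending $w \to \infty$ recovers $\prod_p \beta_p$.

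The main obstacle is the Gowers uniformity bound $\|\tilde\Lambda_a - 1\|_{U^{s+1}[N/W]} = o_{N \to \infty}(1)$, uniformly in $a$ coprime to $W$. This is the deep input, supplied by the inverse theorem of \cite{gtz-uk} combined with the M\"obius--nilsequence orthogonality estimate of \cite{gt-mobius}: the inverse theorem reduces any large Gowers norm to a correlation of $\tilde\Lambda_a - 1$ with a nilsequence, and that correlation is then killed by the M\"obius estimate. The hypothesis $M \gg N\log^{-A} N$ enters only to ensure that the rescaled domain $[M/W]$ is still long enough for these estimates to hold with $o(1)$ error.
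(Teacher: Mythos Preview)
Your proposal is correct and matches the paper's approach: the paper's proof simply cites \cite{gt-linear} (together with \cite{gt-mobius}, \cite{gtz-uk} for the two conjectures assumed there) and \cite[Appendix A]{fgkt} for the extension to $M \gg N\log^{-A}N$, and what you have written is an accurate sketch of the machinery inside those references---the $W$-trick, the generalized von Neumann theorem for the finite-complexity system $\{\n + a_i\m\}$, and the Gowers-uniformity input $\|\tilde\Lambda_a - 1\|_{U^{s+1}} = o(1)$ from the inverse theorem plus M\"obius--nilsequence orthogonality. One small clarification: in the original \cite{gt-linear} the generalized von Neumann theorem is proven directly for functions pointwise bounded by the pseudorandom majorant $\nu+1$ (rather than by $1$), so the dense-model step is used only in establishing the Gowers-norm bound on $\tilde\Lambda_a-1$, not in the von Neumann step itself; your densification-first ordering also works but is a later streamlining.
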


We remark that it is easy to establish the absolute convergence of the product $\prod_p \beta_p$; see \cite{gt-linear}.

\begin{proof}  If $N/M$ is a sufficiently slowly growing function of $N$, this follows immediately from the main theorem of \cite{gt-linear} (specialized to the finite complexity tuple $\n + P_1(\m),\dots,\n+P_k(\m)$ of linear forms), together with the results of \cite{gt-mobius}, \cite{gtz-uk} proving the two conjectures assumed in \cite{gt-linear}.  The extension to the case where $N/M$ is allowed to grow as fast as a power of $\log N$ follows from the same arguments, as discussed in \cite[Appendix A]{fgkt}.
\end{proof}

Specializing this theorem to the case $P_i(\m) \coloneqq  (k-1)\m$, one can compute that $\prod_p \beta_p$ is non-zero, and then it is an easy matter to see that Theorem \ref{linear-pattern} implies Theorem \ref{gt}.  The Hardy-Littlewood prime tuples conjecture \cite{hardy} predicts that the condition that the leading coefficients of the $P_1,\dots,P_k$ are distinct can be relaxed to the condition that the $P_1,\dots,P_k$ themselves are distinct, which would imply (among other things) the twin prime conjecture, but this is unfortunately well beyond the known techniques used to prove this theorem.  The arguments in \cite{gt-linear} also treat the case when $M/N$ is comparable to one, as long as an Archimedean local factor $\beta_\infty$ is inserted on the right-hand side, but to simplify the arguments slightly we work in the local setting $M = o_{N \to\infty}(N)$ to avoid having to consider the Archimedean factor.  The prime tuples conjecture also predicts that the condition $M \gg N \log^{-A} N$ can be replaced with the much weaker condition that $M$ goes to infinity, but again this is beyond the reach of current methods. Even the special case $\E_{n \in [N]} \E_{m \in [M]} \Lambda(n) \Lambda(n+m) = 1 + o_{N \to \infty}(1)$, which essentially measures the error term for the prime number theorem in short intervals on the average, is only known\footnote{If one replaces the von Mangoldt function with the M\"obius function, then there is recent work \cite{mr}, \cite{mrt} obtaining such asymptotics for $H$ growing arbitrarily slowly with $N$.  However, the techniques used rely heavily on small prime divisors, and so do not seem to be directly applicable to problems involving the von Mangoldt function.} for $M \geq N^{1/6+\eps}$ (using the zero-density estimates of Huxley \cite{huxley}), or for $M \geq N^\eps$ assuming the Riemann hypothesis, while to the best of our knowledge the expected asymptotic for $\E_{n \in [N]} \E_{m \in [M]} \Lambda(n) \Lambda(n+m) \Lambda(n+2m)$ is only known for $M \geq N^{5/8+\eps}$ (using the exponential sum estimates of Zhan \cite{zhan}).

The proof of Theorem \ref{linear-pattern} used the same global Gowers uniformity norms that appeared in the proof of Theorem \ref{gt}, as well as a very similar generalized von Neumann theorem.  However, Szemer\'edi's theorem was no longer used, as this result does not hold for arbitrary linear polynomials $P_1,\dots,P_k$ and in any event only provides lower bounds for multiple averages, as opposed to asymptotics.  Instead, by using the results of \cite{gt-mobius}, \cite{gtz-uk} together with some transference arguments, it was shown that a suitable normalization $\Lambda'_{b,W}-1$ of the von Mangoldt function $\Lambda$ was small with respect to the global Gowers uniformity norm, which is sufficient to establish the stated result thanks to the generalized von Neumann theorem.

The first main result of this paper is a higher degree generalization of Theorem \ref{linear-pattern}, which (except for a technical additional condition regarding the lower bound on $M$) is to Theorem \ref{tz} as Theorem \ref{linear-pattern} is to Theorem \ref{gt}.  More precisely, we show

\begin{theorem}[Main theorem]\label{main}  Let $d, r$ be natural numbers, and let $P_1,\dots,P_k \in \Z[\m_1,\dots,\m_r]$ be polynomials of integer coefficients of degree at most $d$.  Furthermore, assume that the degree $d$ components of $P_1,\dots,P_k$ are all distinct (or equivalently, that $P_i-P_j$ has degree exactly $d$ for all $1 \leq i < j \leq k$).  Let $N$ be an asymptotic parameter going to infinity, and let $M = M(N)$ be a quantity such that $M/N^{1/d}$ goes to zero sufficiently slowly as $N \to \infty$ (that is to say, there is a quantity $\omega(N)$ going to zero as $N \to \infty$ depending only on $d,r,k,P_1,\dots,P_k$, and we assume that $M = o_{N \to \infty}( N^{1/d})$ and $M \geq \omega(N) N^{1/d}$).  Then
\begin{equation}\label{lal}
\E_{n \in [N]} \E_{\vec m \in [M]^r} \Lambda(n+P_1(\vec m)) \dots \Lambda(n+P_k(\vec m)) = \prod_p \beta_p + o_{N \to \infty}(1) 
\end{equation}
where $\beta_p$ is the local factor
\begin{equation}\label{betap}
\beta_p \coloneqq  \E_{n \in \Z/p\Z} \E_{\vec m \in (\Z/p\Z)^r} \Lambda_p(n+P_1(\vec m)) \dots \Lambda_p(n+P_k(\vec m))
\end{equation}
and $\Lambda_p \colon \Z/p\Z \to \R$ is the function $\Lambda_p(n) \coloneqq  \frac{p}{p-1} 1_{n \neq 0 \mod p}$.
\end{theorem}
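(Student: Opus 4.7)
The plan is to follow the four-ingredient strategy outlined in the abstract, which is the natural higher-degree analogue of the argument for Theorem~\ref{linear-pattern}. I would first perform a variant of the standard $W$-trick: fix a slowly growing $w$, set $W \coloneqq \prod_{p \leq w} p$, and for each residue class $b$ coprime to $W$ introduce the normalized von Mangoldt function $\Lambda'_{b,W}(n) \coloneqq \frac{\phi(W)}{W} \Lambda(Wn+b)$. Splitting the left-hand side of \eqref{lal} into residues modulo $W$, admissibility guarantees that only tuples $(b_0,\vec b)$ with $b_0 + P_i(\vec b) \not\equiv 0 \pmod p$ for all $p \leq w$ contribute to main order, and summing these contributions against the local Euler factors for $p \leq w$ should produce $\prod_p \beta_p$ up to an acceptable error, provided that for each fixed admissible $(b_0,\vec b)$ one can show
$$ \E_{n \in [N/W]} \E_{\vec m \in [M]^r} \prod_{i=1}^k \Lambda'_{b_i,W}(n+Q_i(\vec m)) = 1 + o_{N\to\infty}(1), $$
where the polynomials $Q_i$ are obtained from the $P_i$ via an affine change of variables.

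Writing $\Lambda'_{b_i,W} = 1 + (\Lambda'_{b_i,W}-1)$ and expanding the product, the main task reduces to showing that every error term involving at least one copy of $\Lambda'_{b,W}-1$ is $o(1)$. The modified generalized von Neumann theorem from \cite{tz-pattern} should bound such an error term by an \emph{averaged local Gowers norm} of $\Lambda'_{b,W}-1$, with the differencing directions dictated by the polynomial derivatives of the $P_i-P_j$; the hypothesis that the degree-$d$ components of the $P_i$ are distinct is what keeps the system ``finite complexity'' at this stage. The novelty compared to \cite{tz-pattern} is that the $P_i$ may have nonzero constant terms, which forces one to track the residues $b_i$ throughout the recurrence, but the underlying Cauchy--Schwarz combinatorics is unchanged.

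Next I would apply the concatenation theorem of the authors to upgrade the averaged local Gowers norm into a single global Gowers uniformity norm $\|\Lambda'_{b,W}-1\|_{U^s[N]}$ for some $s = s(d,r,k)$; concatenation is the natural tool here, since it is explicitly designed to combine multi-parameter low-degree local controls into one high-degree global one. Once this reduction is in place, the Green--Tao--Ziegler work on linear equations in primes \cite{gt-linear,gt-mobius,gtz-uk}, which establishes the inverse conjecture and the M\"obius--nilsequence estimate, immediately yields $\|\Lambda'_{b,W}-1\|_{U^s[N]} = o_{N\to\infty}(1)$ uniformly in $b$ coprime to $W$, closing the Gowers-norm estimate.

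The main technical obstacle, and the reason the transference principle is needed, is that the generalized von Neumann theorem in its naive form requires the functions involved to be bounded, whereas $\Lambda'_{b_i,W}$ is only pointwise majorized by a pseudorandom enveloping sieve $\nu$. To deal with this I would use the Conlon--Fox--Zhao densification approach: replace the $\Lambda'_{b_i,W}$ factors by bounded dense models one at a time, so that each replacement step invokes the von Neumann/concatenation machinery with only one unbounded factor present. The hard part will be verifying that $\nu$ satisfies the requisite linear-forms and correlation conditions relative to the polynomial system $\{n+P_i(\vec m)\}$ when $\vec m$ is constrained to $[M]^r$ with $M$ as small as $N^{1/d-o(1)}$; the precise lower bound $M \geq \omega(N) N^{1/d}$ is presumably dictated by exactly this requirement, namely that the $\vec m$-averaging window be wide enough for the sieve computations to succeed and for the concatenation step to be non-vacuous.
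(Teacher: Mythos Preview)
Your proposal is essentially correct and matches the paper's approach closely: the four ingredients (generalized von Neumann, concatenation, global Gowers uniformity of $\Lambda'_{b,W}-1$, and Conlon--Fox--Zhao densification) are exactly those used in the paper, and the $W$-trick reduction to showing $\E_{n}\E_{\vec m}\prod_i \Lambda'_{b_i,W}(n+Q_i(\vec m)) = 1+o(1)$ is carried out just as you describe (Corollary~\ref{l2-cor} and Section~\ref{w-trick}).

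Two points of emphasis where your ordering and diagnosis differ slightly from the paper's, which you may want to adjust. First, the boundedness constraint does \emph{not} bite at the generalized von Neumann step: Theorem~\ref{l2} already operates with $\nu$-bounded inputs via PET induction and weighted Cauchy--Schwarz. The place where $|f|\leq 1$ is genuinely required is the \emph{concatenation} step (Theorem~\ref{avg-norm}, which rests on Theorem~\ref{besu-box}), and this forces the paper to run transference/densification \emph{before} invoking concatenation rather than after. Concretely, the paper first builds a bounded dense model $f'$ for $\Lambda'_{b,W}$ such that $\|\Lambda'_{b,W}-f'\|$ is small both in the averaged local box norms and in the relevant global $U^{D_*}$ norms (Proposition~\ref{dense-model}, densification Proposition, Corollary~\ref{Coro}); then the global Gowers bound from \cite{gt-linear,gt-mobius,gtz-uk} gives $\|f'-1\|_{U^{D_*}}=o(1)$; and only then is concatenation applied to the bounded function $f'-1$. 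The intermediate step you do not mention, and which is a genuine technical contribution, is the \emph{dual function approximation} Theorem~\ref{dfa}: it shows that averaged local dual functions can be $L^2$-approximated by polynomial combinations of global dual functions, which is what makes the dense-model construction compatible with products of many dual factors without running afoul of the $O(1/\kappa)$ complexity ceiling in the polynomial-forms condition. Second, your explanation of the lower bound $M \geq \omega(N)N^{1/d}$ is slightly off: the sieve (polynomial-forms) estimates in Appendix~\ref{polyapp} already work for $M$ as small as $\log^{1/\kappa}N$. The lower bound is needed specifically so that $M^{d}$ is comparable to $N$, which is what allows Proposition~\ref{l0} to invoke the existing global Gowers uniformity results for $\Lambda'_{b,W}-1$ at scale $N$; on shorter scales no such input is available.
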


Note from \cite[Lemma 9.5]{tz-pattern} gives the asymptotic $\beta_p = 1 + O_{P_1,\dots,P_k}(\frac{1}{p^2})$ for all sufficiently large $p$ (noting that all such $p$ are ``good'' in the sense of \cite[Definition 9.4]{tz-pattern}), so the product $\prod_p \beta_p$ is convergent.

It is likely that the lower bound $M \geq\omega(N) N^{1/d}$ can be relaxed to $M \geq  N^{1/d}\log^{-A} N$ as in Theorem \ref{linear-pattern}, and the upper bound relaxed to $M=O(N)$ at the cost of inserting an Archimedean factor $\beta_\infty$ on the right-hand side as in \cite{gt-linear}, but we do not attempt to establish these extensions here to simplify the exposition.  As will be clear from the method of proof, one can allow for much smaller values of $M$ - in principle, as small as $\log^L N$ for some large $L$ - as soon as one is able to establish some local Gowers uniformity for (a ``$W$-tricked'' modification of) the von Mangoldt function at scale $M^d$.

As with Theorem \ref{linear-pattern}, standard conjectures such as the Bateman-Horn conjecture \cite{bateman} predict that Theorem \ref{main} continues to hold without the requirement that the degree $d$ components of $P_i$ are distinct (so long as the $P_i$ themselves remain distinct), and with $M$ growing arbitrarily slowly with $N$.  Such strengthenings of Theorem \ref{main} remain beyond the methods here.  We also remark that a result similar to Theorem \ref{main}, involving more general polynomial patterns, was established in \cite{cook} under an assumption of sufficiently large ``Birch rank'' on the polynomial pattern, which is a rather different regime to the one considered here in as it tends to require a large number of variables compared to the number and degree of polynomials.  In the recent paper \cite{bien}, some special cases of \eqref{main} were established, in particular the case when $r=2$ and $P_i(\m_1,\m_2) = i (\m_1^2+\m_2^2)$ for $i=1,\dots,k$.

As in \cite{gt-linear}, we have a qualitative corollary of Theorem \ref{main}	:

\begin{corollary}[Qualitative main theorem]\label{qmt} Let $d,r$ be natural numbers, $P_1,\dots,P_k \in \Z[\m_1,\dots,\m_r]$ be polynomials of degree at most $d$.  Assume that the degree $d$ components of $P_1,\dots,P_k$ are all distinct, and suppose that for each prime $p$ there exist $n \in \Z$ and $\vec m \in \Z^r$ such that $n+P_1(\vec m),\dots,n+P_k(\vec m)$ are all not divisible by $p$.  Then there exist infinitely many natural numbers $n,m_1,\dots,m_r$ such that $n+P_1(m_1,\dots,m_r),\dots,n+P_k(m_1,\dots,m_r)$ are simultaneously prime.
\end{corollary}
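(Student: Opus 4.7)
The plan is to derive the qualitative corollary directly from the quantitative Theorem \ref{main} by showing the main term is strictly positive and that genuine primes (rather than prime powers) dominate the sum \eqref{lal}.

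First, I would verify that the admissibility hypothesis is exactly the condition for $\beta_p>0$. Since $\Lambda_p(x) = \frac{p}{p-1} 1_{x \not\equiv 0 \pmod p}$, the integrand in \eqref{betap} is nonnegative, and the expectation is strictly positive if and only if there exist $n \in \Z/p\Z$ and $\vec m \in (\Z/p\Z)^r$ such that $n+P_i(\vec m) \not\equiv 0 \pmod p$ for every $i=1,\dots,k$. Lifting representatives from $\Z/p\Z$ to $\Z$, this is precisely the stated admissibility condition. Combining this with the remark following Theorem \ref{main} that $\beta_p = 1 + O_{P_1,\dots,P_k}(1/p^2)$ for all sufficiently large primes $p$, the infinite product $\prod_p \beta_p$ converges to a strictly positive real number $c = c(P_1,\dots,P_k) > 0$.

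Next I would choose $M = M(N)$ satisfying the hypotheses of Theorem \ref{main} (for instance take $M = \lfloor N^{1/d}/\omega(N)^{-1/2}\rfloor$ so that $M/N^{1/d} \to 0$ slowly). Theorem \ref{main} then yields
\[
\sum_{n \in [N]} \sum_{\vec m \in [M]^r} \Lambda(n+P_1(\vec m)) \cdots \Lambda(n+P_k(\vec m)) \;=\; (c + o(1))\, N M^r
\]
as $N \to \infty$. I would separate the left-hand side into the contribution from tuples $(n, \vec m)$ such that every $n+P_i(\vec m)$ is a genuine prime, and the complementary contribution where at least one $n+P_i(\vec m)$ is a proper prime power $p^j$ with $j \geq 2$. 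For the latter, bound each factor of $\Lambda$ trivially by $O(\log N)$, so the total contribution is at most $O((\log N)^k)$ times the number of tuples $(n, \vec m) \in [N]\times [M]^r$ for which some $n + P_i(\vec m) = O(N)$ equals a proper prime power. Since there are only $O(N^{1/2})$ proper prime powers up to $O(N)$ and each one, for each choice of $i$, pins down $n$ given $\vec m$, this count is $O(k N^{1/2} M^r)$, so the contribution is $O((\log N)^k N^{1/2} M^r) = o(N M^r)$.

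Subtracting this negligible term, the contribution from genuine prime tuples is $(c+o(1)) N M^r$, which is in particular nonzero for all sufficiently large $N$, producing at least one tuple $(n, m_1,\dots,m_r) \in \N \times \N^r$ in the range $[N] \times [M]^r$ for which $n+P_1(\vec m),\dots,n+P_k(\vec m)$ are simultaneously prime. Since the bound is $\gg NM^r$ while any single tuple contributes at most $(\log N)^k = o(NM^r)$, letting $N \to \infty$ forces the number of distinct such tuples to be unbounded, so infinitely many exist. There is no serious obstacle: the only mild subtlety is verifying that the admissibility hypothesis matches $\prod_p \beta_p > 0$, which, as above, is immediate from the explicit form of $\Lambda_p$.
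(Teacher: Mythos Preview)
Your proposal is correct and follows essentially the same route as the paper's own proof sketch: show $\prod_p \beta_p>0$ from admissibility together with the $\beta_p = 1+O(1/p^2)$ asymptotic, apply Theorem~\ref{main}, and discard the negligible prime-power contribution. You have simply fleshed out the details the paper leaves to the reader.
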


\begin{proof}  (Sketch) From hypothesis, the local factors $\beta_p$ are all non-zero, and one can establish the asymptotic $\beta_p = 1 + O_{k,d,r}(\frac{1}{p^2})$ for all sufficiently large primes $p$.  We conclude that the Euler product $\prod_p \beta_p$ is non-zero, and the claim then follows from Theorem \ref{main} since the contribution when one of the $n+P_i(m_1,\dots,m_r)$ is a prime power, rather than a prime, can easily be shown to be negligible.
\end{proof}

The case $r=1, k=2$ of this corollary had been previously established in \cite{le}.
Corollary \ref{qmt} is a partial generalisation of Theorem \ref{tz}, in that it implies the special case of that theorem when $r=1$ and the $\m^d$ coefficients of the polynomials $P_1(\m),\dots,P_k(\m)$ are all distinct.   As mentioned above, the arguments here should eventually extend to allow the $\m^d$ coefficients of $P_i$ to be distinct, so long as the $P_i - P_j$ are non-constant, once one can establish local Gowers norm control on the von Mangoldt function.  In fact, Schinzel's hypothesis H \cite{schinzel} predicts that Corollary \ref{qmt} shold hold even if the some of the $P_i - P_j$ are constant, but this claim (which includes the twin prime conjecture as a special case) is well beyond the methods of this paper.  

We now briefly summarize the method of proof of Theorem \ref{main}.  If one directly applies the methods used to prove Theorem \ref{linear-pattern}, replacing (a variant of) the generalized von Neumann theorem from \cite{gt-primes} with (a variant of) the more complicated generalized von Neumann theorem from \cite{tz-pattern}, one ends up wishing to control various normalized versions $\Lambda'_{b,W}-1$ of the von Mangoldt function in certain averaged local Gowers uniformity norms; furthermore, the transference machinery in \cite{gt-primes}, \cite{tz-pattern} allows one to replace $\Lambda_{b,W}-1$ by a bounded function.  At this point, one would like to apply an inverse theorem for the averaged local Gowers uniformity norms, but a direct application of the known inverse theorems does not lead to a particularly tractable condition to verify on $\Lambda_{b,W}-1$.  To overcome this issue, we use the concatenation theorems recently developed by us in \cite{tz-concat} to control the averaged local Gowers uniform norms arising from the generalised von Neumann theorem by a global Gowers uniform norm.  It is at this juncture that the hypothesis that the $\m^d$ coefficients of the $P_i(\m)$ are all distinct, together with the choice of $M$ as being close to $N^{1/d}$, becomes crucial. 

In some cases, we are able use our methods to partially remove the requirement in Theorem \ref{main} that the degree $d$ components of $P_1,\dots,P_k$ are distinct, although this requires one to understand the distribution (or Gowers uniformity) of primes in short intervals, for which the known unconditional results still fall well short of what is conjecturally true.  As an example of this, we give the conjectural asymptotic for prime triplets of the form $n, n+m, n+P(m)$, with $n \in [N]$ and $m$ slightly smaller than $N^{1/d}$, when $P$ has degree exactly $d$:

\begin{theorem}\label{nn2}  Theorem \ref{main} is true in the case $r=1$, $k=3$, $P_1 = 0$, $P_2 = \m$, and when $P_3$ has degree exactly $d$ for some $2 \leq d \leq 5$.  If one assumes the generalized Riemann hypothesis (GRH), the condition $d \leq 5$ may be removed.
\end{theorem}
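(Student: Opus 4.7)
The plan is to apply the same strategy as in the proof of Theorem \ref{main}—the polynomial generalized von Neumann theorem of \cite{tz-pattern}, the concatenation theorems of \cite{tz-concat}, and the Conlon-Fox-Zhao densification—paying close attention to the scales at which one must estimate (a suitable normalization $\Lambda'_{b,W}-1$ of) the von Mangoldt function. In Theorem \ref{nn2}, the differences $P_i-P_j$ are $\m$ (degree $1$), $P_3(\m)$ (degree $d$), and $P_3(\m)-\m$ (degree $d$). Only two of these three have the top degree $d$, so the hypothesis of Theorem \ref{main} is violated and the result is not a direct corollary; nevertheless, the generalized von Neumann theorem still reduces the average \eqref{lal} to an averaged local Gowers norm of $\Lambda'_{b,W}-1$ along the shifts $\m,\,P_3(\m),\,P_3(\m)-\m$.

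The concatenation theorem of \cite{tz-concat} then converts this averaged local Gowers norm into global Gowers norm conditions at two distinct scales. The contribution of the degree-$d$ shifts $P_3(\m)$ and $P_3(\m)-\m$ yields the familiar global $U^s[N]$ requirement on $\Lambda'_{b,W}-1$, which is supplied by the Green-Tao-Ziegler machinery \cite{gt-mobius,gtz-uk} exactly as in the proof of Theorem \ref{main}. The contribution of the lone degree-$1$ shift $\m$, however, yields a new global $U^{s'}[M]$ condition on $\Lambda'_{b,W}-1$ with $M\approx N^{1/d}$, requiring Gowers uniformity of the (normalized) primes in short intervals of length $M$.

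By the inverse theorem for the Gowers norms \cite{gtz-uk} and the Ratner-type factorization and major/minor arc decompositions of \cite{gt-linear,gt-mobius}, this short-interval Gowers uniformity reduces in turn to zero-density estimates for Dirichlet $L$-functions on short intervals. Huxley's classical zero-density theorem \cite{huxley} supplies such bounds as soon as $M\geq N^{1/6+\eps}$, which is exactly the range $2\leq d\leq 5$. Under GRH, comparable $L$-function bounds hold for all $M\geq N^\eps$, and hence for all $d\geq 2$.

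The main obstacle is precisely this short-interval input: for $d\geq 6$ one needs non-trivial distributional information about primes in intervals shorter than $N^{1/6}$, which is currently beyond unconditional techniques. All other ingredients—the generalized von Neumann theorem, concatenation, inverse theorem, and densification transference—are inherited from the proof of Theorem \ref{main} with only bookkeeping modifications to track the new smaller scale $M$.
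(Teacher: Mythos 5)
There is a genuine gap in your argument. Your step 5, where you claim that the short-interval Gowers uniformity of $\Lambda'_{b,W}-1$ at scale $M\approx N^{1/d}$ ``reduces via the inverse theorem to zero-density estimates,'' is not correct and is precisely the obstruction the paper explicitly acknowledges cannot currently be overcome. Zero-density estimates (Huxley's, or the conditional bounds under GRH) give an $L^2$ \emph{variance} bound
$$\E_{n\in[N]}\left(\E_{h\in[M_0]}\tfrac{\phi(W)}{W}\Lambda'(W(n+h)+b)-1\right)^2 = o(1),$$
which is essentially a first-order statement about prime counts in short intervals; they say nothing about correlations of $\Lambda$ with higher-step nilsequences on such intervals, which is what the inverse theorem for $U^{s'}$ would require you to rule out. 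The paper states this explicitly after Theorem \ref{l1}: ``this requires control on correlations of $\Lambda$ with nilsequences on short intervals, and such control is not currently available in the literature.'' If short-interval Gowers uniformity of the primes were as accessible as your sketch suggests, the paper would not have needed to impose the hypothesis that the degree-$d$ components of the $P_i$ are all distinct in Theorem \ref{main} in the first place.

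The actual proof in the paper avoids ever needing short-interval Gowers uniformity. Since both $P_3-P_1$ and $P_3-P_2$ have degree exactly $d$, the generalized von Neumann theorem (Theorem \ref{l2}, applied with $f_1=\Lambda'_{b_3,W}-1$ and $d_0=d$) controls
$$\E_{n}\E_{m}\,\Lambda'_{b_1,W}(n)\,\Lambda'_{b_2,W}(n+R_2(m))\,(\Lambda'_{b_3,W}(n+R_3(m))-1)$$
by a norm of the form \eqref{sof} at the full scale $M^d\approx N$, and this norm of $\Lambda'_{b_3,W}-1$ is $o(1)$ by Theorem \ref{l1} with $d_0=d$. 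This \emph{eliminates} the degree-$d$ factor outright, leaving only the two-point average $\E_{n}\E_{m}\Lambda'_{b_1,W}(n)\Lambda'_{b_2,W}(n+R_2(m))$, which after factorization reduces precisely to the variance bound displayed above. That is where Huxley (for $d\leq 5$) or GRH (for all $d$) enter — via a second-moment estimate, not via Gowers uniformity in short intervals. Your proposal instead tries to carry all three factors through the concatenation/inverse-theorem machinery simultaneously, and the resulting short-interval $U^{s'}$ requirement on the primes is exactly what cannot be met with existing technology.
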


This theorem will be proven at the end of Section \ref{w-trick}.  The main idea is to use the machinery of proof of Theorem \ref{main} to essentially eliminate the $\Lambda(n+P_3(m))$ factor, leaving only the task of controlling averages roughly of the form $\E_{n \in [N]} \E_{m \in [M]} \Lambda(n) \Lambda(n+m)$, which can be handled by existing results on primes in short intervals, both with and without GRH.  We briefly discuss some other cases that can be handled by this method at the end of that section.

The above results have (somewhat simpler) analogues when the von Mangoldt function $\Lambda$ is replaced by the M\"obius function $\mu$ (or the closely related Liouville function $\lambda$).  In these analogues, the local factors $\prod_p \beta_p$ should be deleted, thus for instance we have
$$\E_{n \in [N]} \E_{m \in [M]^r} \mu(n+P_1(m)) \dots \mu(n+P_k(m)) = o_{N \to \infty}(1) $$
under the hypotheses of Theorem \ref{main}.  The proof of these variant results is in fact significantly simpler, as all the pseudorandom measures $\nu$ that appear in the arguments below can be simply replaced by $1$; also, the ``$W$-trick'' is not needed in this case.  We leave the modifications of the arguments below needed to obtain these variants to the interested reader.

The methods used to establish Corollary \ref{qmt} also give a variant involving sets of primes of positive upper density, improving slightly on our previous results in \cite{tz-narrow}.

\begin{theorem}[Narrow polynomial patterns in subsets of the primes]\label{main-sub}  Let $d$ be a natural number, and let $P_1,\dots,P_k \in \Z[\m]$ be polynomials of integer coefficients of degree at most $d$, such that $P_1(0)=\dots=P_k(0)=0$.  Let $L$ be a sufficiently large quantity depending on $d$.  Let $\delta > 0$, and let ${\mathcal A}$ be a subset of the primes ${\mathcal P}$ such that
$$ \limsup_{N \to \infty} \frac{|{\mathcal A} \cap [1,N]|}{|{\mathcal P} \cap [1,N]} \geq \delta$$
for some $\delta > 0$.  Then one can find infinitely many natural numbers $n,m$ with $n+P_1(m),\dots,n+P_k(m)$ in ${\mathcal A}$ and with $m \leq \log^L n$, where $L$ depends only on $d,k$.
\end{theorem}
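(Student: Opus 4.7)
The plan is to adapt the proof of Theorem \ref{main} (in the case $r=1$, with all $P_i$ vanishing at the origin) to the setting of positive relative density, with $M = \log^L N$ for some fixed $L = L(d,k)$. The decisive feature is that the multidimensional polynomial Szemer\'edi/Bergelson--Leibman theorem used in \cite{tz-pattern}, \cite{tz-narrow} (whose quantitative bounds degrade badly with $\delta$, forcing $L$ to depend on $\delta$) is no longer invoked anywhere. Its role is taken over by the concatenation theorem of \cite{tz-concat}, which controls averaged local Gowers norms by global Gowers norms with constants depending only on the complexity of the pattern, and by the global-Gowers-uniformity of the normalised von Mangoldt function from \cite{gt-mobius}, \cite{gtz-uk}, which is also $\delta$-free.

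In more detail, I would first perform a $W$-trick with $w$ growing sufficiently slowly in $N$, and pigeonhole into a residue class $b \mod W$ so that the normalised indicator of $\mathcal{A}$ becomes a function $f \colon [N] \to \R$ of mean $\gtrsim \delta$, majorised by a Goldston--Pintz--Y{\i}ld{\i}r{\i}m--type pseudorandom measure $\nu$ adapted to the scale $M^d$. Next, apply the generalised von Neumann theorem from \cite{tz-pattern} (in the polynomial, inhomogeneous version used for Theorem \ref{main}) to reduce the problem of obtaining a positive lower bound for $\E_{n \in [N]} \E_{m \in [M]} f(n+P_1(m)) \cdots f(n+P_k(m))$ to showing that $f - \delta$ is small in a suitable averaged local Gowers norm of order $s = s(d,k)$ at scales governed by $M$ and the $P_i$. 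Then invoke the concatenation theorem of \cite{tz-concat} to dominate this averaged local Gowers norm by a fixed global Gowers norm $\|\cdot\|_{U^{s'}[N]}$ for some $s' = s'(d,k)$. Finally, use the Conlon--Fox--Zhao densification transference principle together with the Green--Tao--Ziegler inverse theorem and the equidistribution of the normalised von Mangoldt function on nilsequences to conclude that $f - \delta$ is small in $\|\cdot\|_{U^{s'}[N]}$. The density $\delta$ enters only in the final lower bound for the count, not in the choice of $M$.

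The principal obstacle is to calibrate the $W$-trick and the sieve majorant $\nu$ so as to remain compatible with the narrow window $M = \log^L N$: one needs $w$ slow enough that $\nu$ genuinely behaves pseudorandomly at the very short scale $M^d$ (so that the generalised von Neumann theorem applies), yet fast enough that the Green--Tao--Ziegler equidistribution estimates for $\Lambda$ at the larger scale $N$ can be transferred down through the concatenation mechanism. Once this calibration is carried out, the $\delta$-independence of $L$ follows automatically, because each of the four ingredients (generalised von Neumann, concatenation, inverse theorem, nilsequence equidistribution) has quantitative constants that depend only on $d$, $k$, $s'$ and the coefficients of the $P_i$, with $\delta$ appearing merely as a final multiplicative factor in the count of patterns.
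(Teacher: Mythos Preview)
Your proposal contains a genuine gap at the point where you claim to dispense with the polynomial Szemer\'edi/Bergelson--Leibman theorem and instead conclude that $f-\delta$ is small in a global Gowers norm. Here $f$ is (a $W$-tricked normalisation of) $1_{\mathcal A}\cdot\Lambda$ for an \emph{arbitrary} subset $\mathcal A$ of the primes of relative density $\delta$. There is no reason whatsoever for $f-\delta$ to be Gowers-uniform: already for $\mathcal A$ equal to the primes in a fixed residue class modulo a small prime larger than $w$, the function $f-\delta$ has a large Fourier coefficient and hence large $U^2$-norm. The equidistribution input from \cite{gt-mobius}, \cite{gtz-uk} concerns $\Lambda'_{b,W}-1$, not $1_{\mathcal A}\cdot\Lambda'_{b,W}-\delta$; it cannot detect the structure of $\mathcal A$.

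The paper's route is different and does \emph{not} eliminate Bergelson--Leibman. What it improves is the \emph{dense model} step: Theorem \ref{l1} (first part) produces, for any $0\le f\le \Lambda'_{b,W}$, a bounded model $f'$ with $\E_{\vec h}\|f-f'\|_{\Box^D}=o(1)$ at scale $M=\log^{1/\kappa}N$, where $\kappa$ depends only on $d,r,D$. In \cite{tz-narrow} the analogous statement (Theorem 9 there) required $M=\log^{L(\eps)}N$ with $L$ growing as the target error $\eps\to 0$; since $\eps$ must ultimately be taken small relative to the Bergelson--Leibman lower bound $c(\delta)$, this forced $L$ to depend on $\delta$. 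The concatenation theorem removes this $\eps$-dependence from the dense model construction, so one may fix $L=1/\kappa$ once and for all and then run the remaining arguments of \cite{tz-narrow} (generalised von Neumann to pass from $f$ to $f'$, then polynomial Szemer\'edi applied to the bounded $f'$) exactly as before. In short: concatenation decouples the choice of $L$ from $\eps$, but the positivity of the count for $f'$ still comes from Bergelson--Leibman, and $\delta$ survives only in that final lower bound.
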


Comparing this result with Corollary \ref{qmt}, we see that the set of primes ${\mathcal P}$ has been replaced by a positive density subset ${\mathcal A}$, and that the condition that the $P_i-P_j$ have degree exactly $d$ has been dropped, replaced instead by the hypothesis $P_1(0)=\dots=P_k(0)$; we have also set $r=1$ (as the $r>1$ case follows easily from the $r=1$ case, e.g. by restricting to the diagonal $m_1=\dots=m_r$).  Another key point is the smallness condition on $m$.  The arguments in \cite{tz-pattern} essentially established this theorem with the bound $m = n^{o(1)}$, and the subsequent argument in \cite{tz-narrow} improved this to $m \leq \log^{L(\delta)} n$ where the exponent $L(\delta)$ is permitted to depend on $\delta$ in addition to $d$ and $k$.  Thus the new contribution of Theorem \ref{main-sub} is the removal of the dependence of $L$ on $\delta$.  This follows from the arguments in \cite{tz-narrow}, after replacing \cite[Theorem 9]{tz-narrow} with Theorem \ref{l1} below; see Remark \ref{ms}.

\subsection{Notation}\label{notation-sec}

If $x \in \R/\Z$, we use $\|x\|_{\R/\Z}$ to denote the distance from $x$ to the nearest integer, and $e(x) \coloneqq  e^{2\pi i x}$.

Given two real numbers $A,B$ with $A \leq B$, we use $[A,B]$ to denote the discrete interval $\{ n \in \Z: A \leq n \leq B \}$.  We will often need to identify this interval with a subset of the cyclic group $\Z/N\Z$ for some modulus $N$ (larger than $B-A$), which can of course be done by applying the reduction map $n \mapsto n\ (N)$ from $\Z$ to $\Z/N\Z$.  Similarly for the interval $[M] \coloneqq  \{n \in \N: n \leq M \}$.

Given any finite collection $\m_1,\dots,\m_r$ of indeterminates, we write $\Z[\m_1,\dots,\m_r]$ for the ring of formal polynomials $P$ in these variables with integer coefficients, thus there is a natural number $d$ for which one has
$$ P = \sum_{i_1,\dots,i_r \geq 0: i_1+\dots+i_r \leq d} \alpha_{i_1,\dots,i_r} \m_1^{i_1} \dots \m_r^{i_r}$$
for some integers $\alpha_{i_1,\dots,i_r}$.  The least such $d$ for which one has such a representation is the \emph{degree} of $P$.  Of course, one can evaluate $P(m_1,\dots,m_r)$ for any elements $m_1,\dots,m_r$ of a commutative ring (such as $\Z$ or $\Z/q\Z$) simply by substituting the indeterminates $\m_i$ with their evaluations $m_i$.  We will write indeterminate variables such as $\m,\n$ in Roman font, to distinguish them from elements $m,n$ of a specific ring such as $\Z$ or $\Z/q\Z$.

It will be convenient to work with the notion of a \emph{finite multiset} - an unordered collection $\{a_1,\dots,a_n\}$ of a finite number of objects $a_1,\dots,a_n$, in which repetitions are allowed.  This clearly generalises the notion of a finite set, in which every element occurs with multiplicity one.  If $A = \{a_1,\dots,a_n\}$ is a non-empty finite multiset, and $f \colon A \to \C$ is a function on the elements of $A$, we write
$$ \E_{a \in A} f(a) \coloneqq  \frac{1}{n} \sum_{i=1}^n f(a_i),$$
which of course generalises the notion of an average $\E_{a \in A} f(a)$ over a finite non-empty set $A$; thus for instance $\E_{a \in \{1,2,3,3\}} a = \frac{1+2+3+3}{4}$.  If $A = \{a_1,\dots,a_n\}$ and $B = \{b_1,\dots,b_m\}$ are finite multisets taking values in an additive group $G = (G,+)$, we write
$$ A+B \coloneqq  \{ a_i + b_j: i=1,\dots,n; j=1,\dots,m\}$$
and
$$ A-B \coloneqq  \{ a_i - b_j: i=1,\dots,n; j=1,\dots,m\}$$
for the sum and difference multisets.

Given a finite non-empty multiset $A$ in a domain $X$, we define the \emph{density function} $p_A \colon X \to [0,1]$ to be the quantity
$$p_A(x) \coloneqq  \frac{|\{ a \in A: a = x \}|}{|A|}$$
where the numerator is the multiplicity of $x$ in $A$; thus $\sum_{x \in X} p_A(x)=1$, and $|A|$ the cardinality of $A$ counting multiplicity; more generally, we define
$$ \E_{a \in A} f(a) \coloneqq \sum_{x \in X} f(x) p_A(x)$$
for all $f \colon X \to \R$.  We then define the \emph{total variation distance} $d_{TV}(A,B)$ between two finite non-empty multisets $A,B$ in $X$ to be
$$ d_{TV}(A,B) \coloneqq  \sum_{x \in X} |p_A(x)-p_B(x)|.$$
Thus we have
\begin{equation}\label{say}
 |\E_{a \in A} f(a) - \E_{b \in B} f(b)| \leq d_{TV}(A,B) 
\end{equation}
whenever $f \colon X \to \R$ is bounded in magnitude by one.  Informally, if $d_{TV}(A,A+B)$ is small, one can think of $A$ as having an approximate translation invariance with respect to shifts by $B$.  We observe from the triangle inequality and translation invariance that one has the contraction property
\begin{equation}\label{dtv}
 d_{TV}(A+C, B+C) \leq \E_{c \in C} d_{TV}(A+c,B+c) = d_{TV}(A,B)
\end{equation}
for any finite non-empty multisets $A,B,C$.  In particular, for finite non-empty multisets $A,B,C$ one has
\begin{align*}
d_{TV}(A,A+C) &\leq d_{TV}(A,A+B) + d_{TV}(A+B,A+B+C) + d_{TV}(A+B+C,A+C) \\
&\leq d_{TV}(A,A+B) + d_{TV}(B,B+C) + d_{TV}(A+B,A)
\end{align*}
and thus
\begin{equation}\label{dtv-2}
d_{TV}(A,A+C) \leq 2 d_{TV}(A,A+B) + d_{TV}(B,B+C).
\end{equation}
Informally: if $A$ is approximately $B$-invariant, and $B$ is approximately $C$-invariant, then $A$ is approximately $C$-invariant.

Let $G = (G,+)$ be a finite abelian group, and let $Q_1,\dots,Q_d$ be non-empty finite multisets in $G$ with $d \geq 1$.  Given a function $f \colon G \to \R$, we define the \emph{Gowers box norm} $\|f\|_{\Box^d_{Q_1,\dots,Q_d}(G)} = \|f\|_{\Box^d_{Q_1,\dots,Q_d}}$ to be the non-negative real defined by the formula
$$
\|f\|_{\Box^d_{Q_1,\dots,Q_d}}^{2^d} \coloneqq  \E_{x \in G} \E_{h_1 \in Q_1-Q_1,\dots,h_d \in Q_d-Q_d} \prod_{(\omega_1,\dots,\omega_d) \in \{0,1\}^d} f\left(x + \sum_{i=1}^d \omega_i h_i\right);$$
it is easy to see that the right-hand side is non-negative, so that the $\Box^d_{Q_1,\dots,Q_d}$ norm is well defined.  More generally, given functions $f_\omega \colon G \to \R$ for $\omega \in \{0,1\}^d$, we define the \emph{Gowers inner product}
$$ \langle (f_\omega)_{\omega \in \{0,1\}^d} \rangle_{\Box^d_{Q_1,\dots,Q_d}} \coloneqq 
\E_{x \in G} \E_{h_1 \in Q_1-Q_1,\dots,h_d \in Q_d-Q_d} \prod_{(\omega_1,\dots,\omega_d) \in \{0,1\}^d} f_{\omega_1,\dots,\omega_d}\left(x + \sum_{i=1}^d \omega_i h_i\right) .$$
The Gowers norms can also be defined for complex-valued functions by appropriate insertion of complex conjugation operations, but we will not need to do so here.  We recall the \emph{Cauchy-Schwarz-Gowers inequality}
\begin{equation}\label{csg}
\left| \langle (f_\omega)_{\omega \in \{0,1\}^d} \rangle_{\Box^d_{Q_1,\dots,Q_d}}\right| \leq \prod_{\omega \in \{0,1\}^d} \|f_\omega \|_{\Box^d_{Q_1,\dots,Q_d}}
\end{equation}
(see e.g. \cite[Lemma B.2]{gt-linear}).  Among other things, this implies the monotonicity property
\begin{equation}\label{mono}
\|f\|_{\Box^{d-1}_{Q_1,\dots,Q_{d-1}}} \leq \|f\|_{\Box^{d}_{Q_1,\dots,Q_{d}}}
\end{equation}
for $d > 1$, by setting the $f_\omega$ in \eqref{csg} to be $f$ or $1$ in an appropriate fashion. It also implies the triangle inequality for the $\Box^d_{Q_1,\dots,Q_d}$ norm.

When $Q_1=\dots=Q_d=Q$, we refer to the Gowers box norm $\| \|_{\Box^d_{Q,\dots,Q}(G)}$ as the \emph{Gowers uniformity norm} and abbreviate it as $\| \|_{U^d_Q(G)}$ or simply $\| \|_{U^d_Q}$.  Similarly for the Gowers inner product.  

We observe the identity
$$ 
\|f\|_{\Box^d_{Q_1,\dots,Q_d}}^{2^d} = \E_{x \in G} f(x) {\mathcal D}^d_{Q_1,\dots,Q_d}(f)(x)$$
where the \emph{dual function} ${\mathcal D}^d_{Q_1,\dots,Q_d}(f) \colon G \to \R$ is defined by
$$
{\mathcal D}^d_{Q_1,\dots,Q_d}(f)(x) \coloneqq  \E_{h_1 \in Q_1-Q_1,\dots,h_d \in Q_d-Q_d} \prod_{(\omega_1,\dots,\omega_d) \in \{0,1\}^d \backslash \{0\}^d} f\left(x + \sum_{i=1}^d \omega_i h_i\right).$$
Again, when $Q_1=\dots=Q_d=Q$, we abbreviate ${\mathcal D}^d_{Q,\dots,Q}$ as ${\mathcal D}^d_Q$.  Finally, we recall the $L^p$ norms
$$ \|f\|_{L^p(G)} = \|f\|_{L^p} \coloneqq  \left( \E_{x \in G} |f(x)|^p\right)^{1/p}$$
for $1 \leq p < \infty$, with the usual convention
$$ \|f\|_{L^\infty(G)} = \|f\|_{L^\infty} \coloneqq  \sup_{x \in G} |f(x)|.$$

We now set out the asymptotic notation we will use.  We write $X = O(Y)$, $X \ll Y$, or $Y \gg X$ to denote the estimate $|X| \leq C Y$ for a constant $C$.  Often, we will need the implied $C$ to depend on some parameters, which we will indicate with subscripts, thus for instance $X = O_{d,r}(Y)$ denotes the estimate $|X| \leq C_{d,r} Y$ for some quantity $C_{d,r}$ depending only on $d,r$.  
We will often also be working with an asymptotic parameter such as $N$ going to infinity.  In that setting, some quantities will be held fixed (that is, they will be independent of $N$), while others will be allowed to depend on $N$.  We write $X = o_{N \to \infty}(Y)$, or $X = o(Y)$ for short, if we have $|X| \leq c(N) Y$ for some quantity $c(N)$ depending on $N$ and possibly on other fixed quantities that goes to zero as $N \to \infty$ (holding all fixed quantities constant).

\section{Controlling averaged Gowers norms by global Gowers norms}

A key step in our arguments is establishing that averaged Gowers norms, with shifts depending polynomially on the averaging parameter in a non-degenerate fashion, can be controlled by global Gowers norms.  We begin with a polynomial equidistribution lemma, which roughly speaking asserts that exponential sums with polynomial phases can only be large if the polynomial is ``major arc''.

\begin{lemma}[Polynomial equidistribution]\label{pop}  Let $d,r \geq 1$ be natural numbers, and let $P \in \R[\n_1,\dots,\n_r]$ be a polynomial of degree at most $d$, thus
$$ P(\n_1,\dots,\n_r) = \sum_{i_1,\dots,i_r \geq 0: i_1+\dots+i_r \leq d} \alpha_{i_1,\dots,i_r} \n_1^{i_1} \dots \n_r^{i_r}$$
for some coefficients $\alpha_{i_1,\dots,i_r} \in \R$.
Let $N_1,\dots,N_r \geq 1$ and $0 < \eps \leq 1$ be such that
\begin{equation}\label{jj}
 |\E_{n_1 \in [N_1],\dots,n_r \in [N_r]} e(P(n_1,\dots,n_r))| \geq \eps.
\end{equation}
 Then either one has $N_j \ll_{d,r} \eps^{-O_{d,r}(1)}$ for some $1 \leq j \leq r$, or else there exists a natural number $q \ll_{d,r} \eps^{-O_{d,r}(1)}$ such that
$$ \| q \alpha_{i_1,\dots,i_r} \|_{\R/\Z} \ll_{d,r} \frac{\eps^{-O_{d,r}(1)}}{N_1^{i_1} \dots N_r^{i_r}}$$
for all $i_1,\dots,i_r \geq 0$ with $i_1+\dots+i_r \leq d$.
\end{lemma}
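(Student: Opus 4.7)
I would prove this by induction on the degree $d$, using van der Corput differencing combined with one-variable Weyl/Vinogradov-type results. \textbf{Base case} $d=1$: the polynomial is affine, so $\E_{\vec n} e(P(\vec n)) = e(\alpha_0) \prod_j \E_{n_j \in [N_j]} e(\beta_j n_j)$ with $\beta_j$ the coefficient of $\n_j$; the hypothesis forces each factor to have magnitude $\geq \eps$, and the classical estimate $|\E_{n \in [N]} e(\alpha n)| \ll \min(1, (N \|\alpha\|_{\R/\Z})^{-1})$ gives $\|\beta_j\|_{\R/\Z} \ll \eps^{-1}/N_j$, while the constant term is trivially bounded. One may take $q = 1$.

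\textbf{Inductive step.} Assume $d \geq 2$ and the result in smaller degrees. Choose an index $j$ with $\deg_{\n_j} P \geq 1$. Van der Corput in the $n_j$-variable (with window $H$ of order $\eps^2 N_j$, or concluding directly if $N_j \ll \eps^{-O(1)}$) gives
\begin{equation*}
\eps^2 \ll \E_{|h| \leq H} \left|\E_{\vec n} e(\Delta_h^{(j)} P(\vec n))\right| + O(H/N_j),
\end{equation*}
where $\Delta_h^{(j)} P(\vec n) \coloneqq P(\vec n + h \vec e_j) - P(\vec n)$ has total degree at most $d - 1$. Pigeonholing on $h$ produces a set $\mathcal{H} \subset [-H, H]$ of density $\gtrsim \eps^{O(1)}$ on which $|\E_{\vec n} e(\Delta_h^{(j)} P)| \gtrsim \eps^2$, and the inductive hypothesis equips each such $h$ with a modulus $q_h \ll \eps^{-O(1)}$ making the coefficients of $\Delta_h^{(j)} P$ Diophantine-approximable at the natural scales $N_1^{i_1} \cdots N_r^{i_r}$.

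\textbf{Main obstacle.} The crux is synthesizing a single $q$ from the family $\{q_h\}$. After a further pigeonhole on the $\eps^{-O(1)}$ possible values of $q_h$, pass to a sub-collection of $\mathcal{H}$ still of density $\gtrsim \eps^{O(1)}$ on which $q_h$ equals a common $q_0$. Now the coefficient of each monomial in $\Delta_h^{(j)} P$ is a polynomial in $h$ of degree $\leq d$ whose own coefficients are explicit integer-linear combinations of the $\alpha_{i_1, \dots, i_r}$ of $P$; the statement that this polynomial-in-$h$ lies close to $q_0^{-1} \Z$ for a positive-density set of $h \in [-H, H]$ feeds back into the $r = 1$ case of the present lemma (applied to $q_0$ times the polynomial), forcing each $\alpha_{i_1, \dots, i_r}$ with $i_j \geq 1$ to be approximable modulo $1/(q_0 q_1)$ for some $q_1 \ll \eps^{-O(1)}$. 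The coefficients of $P$ with $i_j = 0$ are invisible to the $n_j$-differencing; I would recover them by repeating the entire argument with a different choice of differencing variable, or equivalently by subtracting the already-controlled portion of $P$ and iterating on the remaining lower-degree polynomial. Taking an LCM of the finitely many $\eps^{-O(1)}$-size moduli produced yields the single $q$ claimed.
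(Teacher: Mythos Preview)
Your approach is correct but genuinely different from the paper's.  The paper inducts on the number of variables $r$, not on the degree $d$: for $r>1$ it freezes $n_r$, views $P$ as a degree-$d$ polynomial in $n_1,\dots,n_{r-1}$ with coefficients that are polynomials in $n_r$, applies the $(r-1)$-variable case to obtain (after pigeonholing on $q$) that each such coefficient-polynomial $\alpha_{i_1,\dots,i_{r-1}}(n_r)$ is close to $q^{-1}\Z$ for many $n_r$, and then invokes the one-variable \emph{recurrence} lemma \cite[Lemma~4.5]{gt-ratner} to extract the original $\alpha_{i_1,\dots,i_r}$.  This avoids van~der~Corput entirely and captures all coefficients in one pass, since even the $i_r=0$ terms appear as constant terms of the $\alpha_{i_1,\dots,i_{r-1}}(\n_r)$.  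Your route---induct on $d$, van~der~Corput in one variable, then recover the coefficients not seen by that differencing by repeating in the other variables and taking an LCM---also works, but is slightly heavier.  Two small imprecisions to flag: first, the step you describe as ``feeding back into the $r=1$ case of the present lemma'' is not literally an exponential-sum hypothesis but a recurrence hypothesis (the polynomial in $h$ is near $q_0^{-1}\Z$ on a dense set); the correct black box is the Vinogradov-type lemma \cite[Lemma~4.5]{gt-ratner}, which is what the paper also uses.  Second, ``iterating on the remaining lower-degree polynomial'' is not right---the coefficients with $i_j=0$ form a polynomial of the \emph{same} degree in \emph{fewer} variables, so it is your first option (repeat the differencing in each $n_j$ and LCM the moduli) that actually closes the argument.
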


\begin{proof}  In the one-dimensional case $r=1$, this follows from standard Weyl sum estimates (see e.g. \cite[Proposition 4.3]{gt-ratner}).  Now suppose inductively that $r > 1$, and that the claim has already been proven for $r-1$.  For brevity, we allow all implied constants in the asymptotic notation here to depend on $r,d$.

From \eqref{jj} we have
$$ |\E_{n_1 \in [N_1],\dots,n_{r-1} \in [N_{r-1}]} e(P(n_1,\dots,n_r))| \gg \eps$$
for at least $\gg \eps N_r$ values of $n_r \in [N_r]$.  On the other hand, we can write
$$ P(n_1,\dots,n_r) = \sum_{i_1,\dots,i_{r-1} \geq 0: i_1+\dots+i_{r-1} \leq d} \alpha_{i_1,\dots,i_{r-1}}(n_r) n_1^{i_1} \dots n_{r-1}^{i_{r-1}}$$
where for each $i_1,\dots,i_{r-1}$, $\alpha_{i_1,\dots,i_{r-1}} \in \Z[\n_r]$ is the polynomial
$$ \alpha_{i_1,\dots,i_{r-1}}(\n_r)\coloneqq   \sum_{i_r \geq 0: i_1+\dots+i_r \leq d} \alpha_{i_1,\dots,i_r} \n_r^{i_r}.$$
Applying the induction hypothesis, we see that for $\gg \eps N_r$ values of $n_r$, one can find $q \ll \eps^{-O(1)}$ such that
\begin{equation}\label{dosh}
 \|  q \alpha_{i_1,\dots,i_{r-1}}(n_r) \|_{\R/\Z} \ll \frac{\eps^{-O(1)}}{N_1^{i_1} \dots N_{r-1}^{i_{r-1}}}
\end{equation}
for all $i_1,\dots,i_{r-1}$.  At present, $q$ can depend on $n_r$, but by the pigeonhole principle we can pass to a set of $\gg \eps^{O(1)} N_r$ values of $n_r$ and make $q$ independent of $n_r$.  Then, for each $i_1,\dots,i_{r-1}$, we have \eqref{dosh} for $\gg \eps^{O(1)} N_r$ values of $n_r$.  Applying \cite[Lemma 4.5]{gt-ratner}, this implies that either $N_r \ll \eps^{-O(1)}$, or else there exists a natural number $k \ll \eps^{-O(1)}$ (possibly depending on $i_1,\dots,i_{r-1}$) such that
$$
 \| k q \alpha_{i_1,\dots,i_{r-1},i_r} \|_{\R/\Z} \ll \frac{\eps^{-O(1)}}{N_1^{i_1} \dots N_{r-1}^{i_{r-1}} N_r^{i_r}}$$
for all $i_r$.  At present $k$ can depend on $i_1,\dots,i_{r-1}$, but by multiplying together all the $k$ associated to the $O(1)$ possible choices of $(i_1,\dots,i_{r-1})$, we may take $k$ independent of $i_1,\dots,i_{r-1}$.  Replacing $q$ by $kq$, we obtain the claim.
\end{proof}

Our next result asserts (roughly speaking) that on the average, a multidimensional progression
$$ Q(\vec h) \coloneqq  P_1(\vec h_1) [-M,M] + \dots + P_k(\vec h_k) [-M,M]$$
(with $\vec h$ of the order of $M$, and $P_1,\dots,P_k$ polynomials of degree exactly $d-1$) will have an approximate global translation symmetry, in the sense that $Q(\vec h)$ is close on total variation to $Q(\vec h) + q [-A^{-2k} M^d, A^{-2k} M^d]$ for some reasonably small $q$ and $A$, if $k$ is large enough depending on $d,r$.  The equidistribution result from Lemma \ref{pop} will play a key role in the proof of this statement.

\begin{theorem}[Approximate global symmetry]\label{mung}  Let $d,r \geq 1$ be natural numbers, and suppose that $k \geq 1$ is a natural number that is sufficiently large depending on $d,r$.  Let $A$ be a quantity that is sufficiently large depending on $d,r,k$, and for each $j=1,\dots,k$, let $P_j \in \Z[\h_1,\dots,\h_r]$ be a polynomial of degree exactly $d-1$ with coefficients that are integers of magnitude at most $A$.  Let $M$ be a quantity that is sufficiently large depending on $d,r,k,A$.   
For any $\vec h = (\vec h_1,\dots, \vec h_k)$ in $(\Z^r)^k$, let $Q(\vec h)$ be the generalised arithmetic progression
$$ Q(\vec h) \coloneqq  P_1(\vec h_1) [-M,M] + \dots + P_k(\vec h_k) [-M,M]$$
(viewed as a multiset in $\Z$) and let $Q_0$ be the arithmetic progression
$$ Q_0 \coloneqq  [-A^{-2k} M^d, A^{-2k} M^d]$$
(also viewed as a multiset in $\Z$).  Then one has
\begin{equation}\label{emm}
\E_{\vec h \in ([M]^r)^k} \inf_{1 \leq q \leq A^k} d_{TV}(Q(\vec h), Q(\vec h) + q Q_0) \ll_{d,r,k} A^{-1}.
\end{equation}
\end{theorem}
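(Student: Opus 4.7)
The plan is to use Fourier analysis on $\Z/N\Z$ (for a large prime $N$) together with the polynomial equidistribution Lemma \ref{pop} and a pigeonhole step that chooses $q$ as a function of $\vec h$. For fixed $\vec h$, write $a_j \coloneqq P_j(\vec h_j)$. Since the multisets involved are supported in a discrete interval of length $O_{d,r,k}(AM^d)$, I would embed everything into $\Z/N\Z$ with $N$ prime and much larger. Cauchy--Schwarz yields
$$
d_{TV}(Q(\vec h), Q(\vec h)+qQ_0) \ll_{d,r,k} (AM^d)^{1/2} \, \|p_{Q(\vec h)} - p_{Q(\vec h)} * p_{qQ_0}\|_{L^2(\Z/N\Z)},
$$
and Plancherel expands the $L^2$ norm squared as
$$
\frac{1}{N} \sum_{\xi \in \Z/N\Z} \prod_{j=1}^k |\Phi_M(a_j \xi/N)|^2 \, |1 - \Phi_{A^{-2k}M^d}(q\xi/N)|^2,
$$
where $\Phi_L(\theta) \coloneqq \frac{1}{2L+1}\sum_{|m|\leq L} e(m\theta)$ obeys $|\Phi_L(\theta)| \ll \min(1, 1/(L\|\theta\|_{\R/\Z}))$ and $|1-\Phi_L(\theta)| \ll (L\|\theta\|_{\R/\Z})^2$ for small argument.

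I would then split into major and minor arcs. Fix a threshold $\eta = A^{-c}$ to be determined, and let $S(\vec h) \subset \Z/N\Z$ be the set of frequencies $\xi$ for which $|\Phi_M(a_j\xi/N)|^2 \geq \eta^{2/k}$ for every $j=1,\dots,k$; on $S(\vec h)$ the Fej\'er-kernel bound forces $\|a_j\xi/N\|_{\R/\Z} \ll A^{c/k}/M$ for each $j$. Since $|a_j| \ll_{d,r} AM^{d-1}$, these $k$ simultaneous Diophantine conditions pin $\xi/N$ into a finite union of small Bohr neighbourhoods of rationals whose denominators can be combined (by a gcd/pigeonhole argument applied to the $a_j$'s) into a single integer $q(\vec h) \in [1, A^k]$ with $\|q(\vec h)\xi/N\|_{\R/\Z} \ll A^{c'}/M^d$ uniformly for $\xi \in S(\vec h)$, giving $|1-\Phi_{A^{-2k}M^d}(q(\vec h)\xi/N)|^2 \ll A^{-2}$ on the major arcs.

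On the minor arc complement of $S(\vec h)$, the key is to first take the expectation in $\vec h$. Independence of $\vec h_1,\dots,\vec h_k$ factorises
$$
\E_{\vec h}\prod_{j=1}^k |\Phi_M(a_j\xi/N)|^2 = \prod_{j=1}^k \E_{\vec h_j} |\Phi_M(P_j(\vec h_j)\xi/N)|^2.
$$
Expanding $|\Phi_M(\theta)|^2 = \sum_{|s|\leq 2M} w(s) e(s\theta)$ with the Fej\'er weight $w$ and applying Lemma \ref{pop} to the polynomial $\vec h_j \mapsto sP_j(\vec h_j) \xi/N$ of degree $d-1$, each factor is polynomially small in a parameter for any minor arc $\xi$; raising to the $k$-th power yields a gain of the form $A^{-\Omega_{d,r}(k)}$, which for $k$ sufficiently large in terms of $d, r$ beats the $(AM^d)^{1/2}$ loss from Cauchy--Schwarz.

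The main obstacle will be the quantitative bookkeeping: balancing the minor arc threshold $\eta$, the denominator budget $A^k$ for $q(\vec h)$, the exponent in Lemma \ref{pop}'s gain, and the integer $k$ so that all the estimates mesh together. This is precisely why the hypotheses demand $k$ large in terms of $d,r$, then $A$ large in terms of $d,r,k$, and finally $M$ large in terms of $d,r,k,A$: these nested smallness conditions leave just enough room to absorb all the relevant exponents.
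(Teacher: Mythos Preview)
Your overall strategy — Fourier analysis, a major/minor arc decomposition, Lemma \ref{pop} for the minor arcs, and a gcd-based choice of $q(\vec h)$ for the major arcs — matches the paper's. But there are two genuine gaps in the execution.

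First, your major/minor arc split is $\vec h$-dependent (you define $S(\vec h)$ via the values $a_j=P_j(\vec h_j)$), yet in the minor arc analysis you write $\E_{\vec h}\prod_j|\Phi_M(a_j\xi/N)|^2=\prod_j\E_{\vec h_j}|\Phi_M(P_j(\vec h_j)\xi/N)|^2$ and invoke Lemma \ref{pop} to say each factor is small ``for any minor arc $\xi$''. That factorisation only holds when you average over \emph{all} $\vec h$, and Lemma \ref{pop} gives smallness of $\E_{\vec h_j}|\Phi_M(P_j(\vec h_j)\xi/N)|^2$ as a property of $\xi$ alone (namely, when $\xi/N$ is not well-approximated by a rational with small denominator). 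These two ``minor arc'' notions do not match. The fix, which is what the paper does, is to make the split $\vec h$-independent from the start: call $\xi$ major arc when $\|q\xi/N\|_{\R/\Z}\le A^{\sqrt{k}}/N$ for some $q\le A^{\sqrt{k}}$. Then on the minor arcs one bounds $|1-\Phi_{A^{-2k}M^d}(q\xi)|\le 2$, factorises the $\vec h$-average cleanly, and the distributional control coming from Lemma~\ref{pop} kicks in for each factor.

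Second, your major arc paragraph hides the key step. On a major arc $\xi$ with rational approximation $a_\xi/q_\xi$, the condition $\|P_j(\vec h_j)\xi/N\|_{\R/\Z}$ small forces $q_\xi$ to divide $P_j(\vec h_j)$ for every $j$; hence $q_\xi$ divides $g(\vec h)\coloneqq\gcd(P_1(\vec h_1),\dots,P_k(\vec h_k))$, and $q=g(\vec h)$ is the natural choice. But this only lies in $[1,A^k]$ when $g(\vec h)\le A^k$. The case $g(\vec h)>A^k$ requires a separate argument: you must show that this event has probability $O_{d,r,k}(A^{-1})$ in $\vec h$, which follows from a divisor-counting bound on $\#\{\vec h_j: q\mid P_j(\vec h_j)\}$ summed over $q>A^k$. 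Your sketch (``gcd/pigeonhole argument'') does not address this, and without it the infimum over $q\le A^k$ in \eqref{emm} is not controlled.

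A minor point: the Cauchy--Schwarz step passing from $d_{TV}$ to an $L^2$ norm with the loss $(AM^d)^{1/2}$ is an unnecessary detour. The paper simply bounds $d_{TV}$ by the $\ell^1$ norm of the Fourier coefficients of $p_{Q(\vec h)}-p_{Q(\vec h)+qQ_0}$, i.e.\ by $\sum_{\xi}|1-D'(q\xi)|\prod_j|D(P_j(\vec h_j)\xi)|$, which avoids squaring and makes the numerology cleaner.
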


For future reference, we observe that the claim also holds when the arithmetic progressions are viewed as subsets of a cyclic group $\Z/N\Z$ rather than $\Z$, since applying reduction modulo $N$ can only decrease the total variation norm.

\begin{proof}   Let $N \coloneqq  A^{2d} M^d$, then the projection of $\Z$ to $\Z/N\Z$ is injective on $Q(\vec h)$ and $Q(\vec h) + q Q_0$.  Thus, we may interpret these progressions instead as lying in the cyclic group $\Z/N\Z$.  
By Fourier expansion in $\Z/N\Z$, the density function $p_{Q(\vec h)}$ of the multiset $Q(\vec h)$ can be written as
$$ p_{Q(\vec h)}(x) = \frac{1}{N} \sum_{\xi \in \Z/N\Z} e(\xi x / N) \E_{a \in Q(\vec h)} e( - a \xi / N) $$
which factorises as
$$ p_{Q(\vec h)}(x) = \frac{1}{N} \sum_{\xi \in \Z/N\Z} e(\xi x / N) \prod_{j=1}^k D( P_j(\vec h_j) \xi )$$
where $D \colon \Z/N\Z \to [-1,1]$ is the Dirichlet kernel
$$ D(\xi) \coloneqq  \E_{m \in [-M,M]} e( - m \xi / N ).$$
Similarly we have
$$ p_{Q(\vec h)+qQ_0}(x) = \frac{1}{N} \sum_{\xi \in \Z/N\Z} e(\xi x / N) D'(q \xi) \prod_{j=1}^k D(P_j(\vec h_j) \xi)$$
where $D'$ is another Dirichlet kernel, defined by the formula
$$ D'(\xi) \coloneqq  \E_{m \in [-A^{-2k} N, A^{-2k} N]} e(m \xi / N).$$
Subtracting and using the triangle inequality, we conclude that
$$
d_{TV}(Q(\vec h), Q(\vec h)+qQ_0) \leq \sum_{\xi \in \Z/N\Z} |D'(q\xi)-1| \prod_{j=1}^k |D(P_j(\vec h_j) \xi)| $$
and so it suffices to show that
\begin{equation}\label{mos}
\E_{\vec h \in ([M]^r)^k} \inf_{1 \leq q \leq A^k} 
\sum_{\xi \in \Z/N\Z} |D'(q \xi)-1| \prod_{j=1}^k |D(P_j(\vec h_j) \xi)| \ll_{d,r,k} A^{-1}.
\end{equation}
Call $\xi \in \Z/N\Z$ \emph{major arc} if one has
$$ \left\| \frac{q \xi}{N}\right \|_{\R/Z} \leq A^{\sqrt{k}}/N $$
for some $1 \leq q \leq A^{\sqrt{k}}$, and \emph{minor arc} otherwise.  We first consider the contribution to \eqref{mos} of the minor arcs.  Here we bound $|D'(q\xi)-1|$ by $2$, and estimate this contribution by
$$ 2 \sum_\xi^* \prod_{j=1}^k \E_{\vec h_j \in [M]^r} |D(P_j(\vec h_j) \xi)|$$
where $\sum_\xi^*$ denotes a summation over minor arc $\xi$.  By several applications of H\"older's inequality, we may bound this by
$$ 2 \prod_{j=1}^k \left(\sum_\xi (\E_{\vec h_j \in [M]^r} |D(P_j(\vec h_j) \xi)|^2)^{k/2}\right)^{1/k}.$$

We have the following distributional control on the expression $\E_{\vec h_j \in [M]^r} |D(P_j(\vec h_j) \xi)|^2$ appearing above:

\begin{proposition}[Distributional bound]  Let the notation and hypotheses be as above, and let $n$ be a natural number.  Let $1 \leq j \leq k$.  Then there are at most $O_{d,r}( (A2^n)^{O_{d,r}(1)} )$ values of minor arc $\xi \in \Z/N\Z$ for which
$$ \E_{\vec h_j \in [M]^r} |D(P_j(\vec h_j) \xi)|^2 \geq 2^{-n}.$$
Furthermore, there are no such minor arc $\xi$ unless
\begin{equation}\label{nak}
 2^{-n} \leq A^{-k^{1/4}}.
\end{equation}
\end{proposition}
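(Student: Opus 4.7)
The plan is to expand
$$
|D(P_j(\vec h_j)\xi)|^2 = \E_{m,m'\in[-M,M]} e\!\left(\tfrac{(m'-m)P_j(\vec h_j)\xi}{N}\right),
$$
so that the hypothesis becomes a lower bound $\geq 2^{-n}$ on an exponential sum in the $r+2$ variables $(\vec h_j,m,m')$ ranging over $[M]^r\times[-M,M]^2$. The phase is a polynomial of total degree exactly $d$, whose nonzero coefficients all have the form $\pm\xi\beta/N$ where $\beta$ runs over the coefficients of $P_j$.

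I would then invoke Lemma \ref{pop} with $\eps = 2^{-n}$. Since $M$ is taken sufficiently large depending on $d,r,k,A$, the degenerate alternative $N_j \ll \eps^{-O_{d,r}(1)}$ in Lemma \ref{pop} is vacuous, and we extract a single modulus $q \ll_{d,r} 2^{nO_{d,r}(1)}$ such that
$$
\left\|\tfrac{q\xi\beta_i}{N}\right\|_{\R/\Z} \ll_{d,r} \tfrac{2^{nO_{d,r}(1)}}{M^{i_1+\cdots+i_r+1}}
$$
for every coefficient $\beta_i$ of $P_j$. Specialising to a top-order coefficient $\beta^*$ of $P_j$ (which exists and satisfies $1 \leq |\beta^*| \leq A$ since $P_j$ has degree exactly $d-1$ with integer coefficients of magnitude at most $A$), and using $N = A^{2d}M^d$, the estimate becomes
$$
\left\|\tfrac{q\beta^*\xi}{N}\right\|_{\R/\Z} \ll_{d,r} \tfrac{2^{nO_{d,r}(1)} A^{2d}}{N}.
$$
Writing $Q := q|\beta^*| \ll_{d,r} A \cdot 2^{nO_{d,r}(1)}$, a routine count (the solution set is a union of at most $\gcd(Q,N)$ short arcs of combined length $\ll_{d,r} (A 2^n)^{O_{d,r}(1)}$) gives $\ll_{d,r} (A 2^n)^{O_{d,r}(1)}$ values of $\xi\in\Z/N\Z$ satisfying this constraint for each fixed $q$; summing over the $\ll_{d,r} 2^{nO_{d,r}(1)}$ admissible values of $q$ yields the first assertion.

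For the second assertion, observe that the same pair $(Q,\xi)$ is already a candidate major-arc witness: $Q \leq A\cdot 2^{nO_{d,r}(1)}$ and $\|Q\xi/N\|_{\R/\Z} \leq 2^{nO_{d,r}(1)}/N$ (absorbing the $A^{2d}$ factor into the $O_{d,r}(1)$). Assuming $2^{-n} > A^{-k^{1/4}}$ forces $2^{nO_{d,r}(1)} \leq A^{O_{d,r}(k^{1/4})}$, and for $k$ large enough depending on $d,r$ this is bounded by $A^{\sqrt{k}}$; so $\xi$ would be major arc, contradicting the minor-arc hypothesis. Hence no such minor arc $\xi$ exists in this regime. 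The main delicacy in the argument is precisely this exponent bookkeeping: the threshold $k^{1/4}$ in \eqref{nak} is calibrated so that the $\eps^{-O_{d,r}(1)}$ losses from Lemma \ref{pop} remain strictly below the major-arc exponent $\sqrt{k}$ once $k$ is large, with everything else being routine once $M$ is large enough to kill the degenerate alternative in Lemma \ref{pop}.
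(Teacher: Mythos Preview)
Your approach is essentially the same as the paper's: expand the square, apply Lemma \ref{pop} to the resulting polynomial exponential sum, specialise to a top-degree coefficient of $P_j$, and count. The execution, however, contains one genuine gap.

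You dismiss the degenerate alternative of Lemma \ref{pop} with the sentence ``Since $M$ is taken sufficiently large depending on $d,r,k,A$, the degenerate alternative $N_j \ll \eps^{-O_{d,r}(1)}$ in Lemma \ref{pop} is vacuous''. This reasoning is incorrect: here $\eps = 2^{-n}$, and $n$ is an \emph{arbitrary} natural number, not bounded in terms of $d,r,k,A$. No matter how large $M$ is chosen relative to those fixed parameters, the inequality $M \ll_{d,r} 2^{n O_{d,r}(1)}$ will hold once $n$ is large enough, so the degenerate branch of Lemma \ref{pop} cannot be excluded uniformly in $n$.

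The paper repairs this with an explicit case split at the outset: if $N \ll (2^n)^{O(1)}$ (equivalently $M \ll (2^n)^{O(1)}$, since $N = A^{2d} M^d$), then the first assertion is trivial because there are only $N \ll (A2^n)^{O_{d,r}(1)}$ residues $\xi$ in total, and \eqref{nak} holds automatically because $2^n$ then exceeds some positive power of $M$, which in turn is large compared to $A^{k^{1/4}}$ by hypothesis. Only after disposing of this regime may one assume the non-degenerate conclusion of Lemma \ref{pop}. Once you insert this case split, the remainder of your argument goes through and coincides with the paper's proof.
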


\begin{proof}  For brevity we allow all implied constants in the asymptotic notation to depend on $d$ and $r$. If we have $N \ll (2^n)^{O(1)}$ then \eqref{nak} is trivial (since $N$ is assumed large depending on $d,r,k,A$), and the first claim is also trivial since there are clearly at most $N$ possible choices for $\xi$.  Thus we may assume that we do not have an estimate of the form $N \ll (2^n)^{O(1)}$.

Assume that $\E_{\vec h_j \in [M]^r} |D(P_j(\vec h_j) \xi)|^2 \geq 2^{-n}$ for some natural number $n$.  

We can expand $\E_{\vec h_j \in [M]^r} |D(P_j(\vec h_j) \xi)|^2$ as
$$ \E_{\vec h_j \in [M]^r} |D(P_j(\vec h_j) \xi)|^2 = \E_{\vec h_j \in [M]^r} \E_{m,m' \in [-M,M]} e\left( \frac{m' P_j(\vec h_j) \xi}{N} - \frac{m P_j(\vec h_j) \xi}{N} \right).$$
Applying Lemma \ref{pop}, we conclude that there is a natural number $q_j \ll (A 2^n)^{O(1)}$ such that
$$ \left\| \frac{q_j \xi}{N} \alpha_{j,i_1,\dots,i_r} \right\|_{\R/\Z} \ll \frac{(A2^n)^{O(1)}}{M^{i_1+\dots+i_r+1}}$$
for all the coefficients $\alpha_{j,i_1,\dots,i_r}$ of $P_j$.  In particular, since $P_j$ has at least one non-zero coefficient
$\alpha_{j,i_1,\dots,i_r}$ with $i_1+\dots+i_r=d-1$, we have
\begin{equation}\label{con}
 \left\| \frac{q_j \xi}{N} \alpha_{j,i_1,\dots,i_r} \right\|_{\R/\Z} \ll \frac{(A2^n)^{O(1)}}{N}
\end{equation}
for that coefficient.   This is inconsistent with the minor arc hypothesis unless \eqref{nak} holds.

For fixed $q_j$, the constraint \eqref{con} restricts $\xi$ to at most $O( (A2^n)^{O(1)} )$ possible values; summing over $q_j$, we conclude the proposition.
\end{proof}

Applying the above proposition for each $n$ and summing, we conclude that
\begin{align*}
 \sum^*_{\xi} \left(\E_{\vec h_j \in [M]^r} |D(P_j(\vec h_j) \xi)|^2\right)^{k/2} &\ll_{k} \sum_{n: 2^{-n} \leq A^{-k^{1/4}}} (A2^n)^{O(1)} 2^{-nk/2} \\
&\ll_{k} A^{-1} 
\end{align*}
if $k$ is sufficiently large depending on $d,r$.  Thus the contribution of the minor arcs to \eqref{mos} is acceptable.

It remains to control the contribution to \eqref{mos} of the major arc $\xi$.  Note that the number of major arc $\xi$ is $O( A^{O(\sqrt{k})} )$.  As a consequence, any choice of $\vec h$ and $\xi$ for which 
$$ |D(P_j(\vec h_j) \xi)| \leq A^{-k} $$
for some $1 \leq j \leq k$ gives a negligible contribution to \eqref{mos}.  Thus we may restrict attention to those $\vec h$ and $\xi$ for which $\xi$ is major arc and
$$ |D(P_j(\vec h_j) \xi)| > A^{-k} $$
for all $j=1,\dots,k$.  Summing the Dirichlet series, this implies that
$$ \left\| \frac{P_j(\vec h_j) \xi}{N} \right\|_{\R/\Z} \ll \frac{A^k}{N} $$
for all $j=1,\dots,k$. On the other hand, as $\xi$ is major arc, we have
\begin{equation}\label{qoad}
 \left\| \frac{\xi}{N} - \frac{a_\xi}{q_\xi} \right\|_{\R/Z} \leq A^{\sqrt{k}}/N 
\end{equation}
for some $1 \leq q_\xi \leq A^{\sqrt{k}}$ and $a$ coprime to $q_\xi$.  We also have $P_j(\vec h_j) \ll A^{O(1)} M^{d-1}$.  As $M$ is large, these estimates are only compatible with each other if $q_\xi$ divides $P_j(\vec h_j)$.  In particular, $q_\xi$ divides the greatest common divisor of $P_1(\vec h_1),\dots,P_k(\vec h_k)$.

Suppose that $\vec h$ is such that the greatest common divisor $(P_1(\vec h_1),\dots,P_k(\vec h_k))$ of $P_1(\vec h_1),\dots,P_k(\vec h_k)$ is at most $A^k$.  Denoting this greatest common divisor by $q$, we see from \eqref{qoad} that
$$ |D'(q\xi) - 1| \ll A^{-k + O(\sqrt{k})} $$
for all $\xi$ that have not already been previously eliminated from consideration.  Thus the contribution of these $\vec h$ to \eqref{mos} is acceptable.  Thus we only need to restrict attention to those $\vec h$ for which $(P_1(\vec h_1),\dots,P_k(\vec h_k)) > A^k$.  For this case, we bound $|D'(q \xi)-1|$ by $2$ and $D(P_j(\vec h_j) \xi)$ by one, and use the fact that there are only $O(A^{O(\sqrt{k})})$ major arc $\xi$, and bound this contribution to \eqref{mos} by
\begin{equation}\label{mos-left}
 \ll A^{O(\sqrt{k})} \E_{\vec h \in ([M]^r)^k} 1_{(P_1(\vec h_1),\dots,P_k(\vec h_k)) > A^k}.
\end{equation}
Observe from the Chinese remainder theorem and the divisor bound that for any $q > A^k$, the constraint $q|P_j(\vec h_j)$ restricts $\vec h_j$ to $O_{\eps}( A^{O(1)} q^{-1+\eps} )$ possible values for any $\eps>0$.  Setting $\eps = 1/2$ (say), this implies that there are $O( A^{O(1)} q^{-1/2} M^r)$ choices of $\vec h_j$ for which $q | P_j(\vec h_j)$.  Thus, the greatest common divisor $(P_1(\vec h_1),\dots,P_k(\vec h_k))$ can equal $q$ for at most $O( A^{O(1)} q^{-1/2} M^r)^k$ choices of $\vec h$.  Thus \eqref{mos-left} is bounded by
$$
\ll A^{O(\sqrt{k})} \sum_{q > A^k} \left(O( A^{O(1)} q^{-1/2})\right)^k $$
which is acceptable since $k$ is sufficiently large depending on $d,r$.
\end{proof}

Next, we recall a key consequence of the concentation theory developed in \cite{tz-concat}.

\begin{theorem}[Qualitative Bessel inequality for box norms]\label{besu-box}  Let $d$ be a positive integer.  For each $1 \leq j \leq d$, let $(Q_{i,j})_{i \in I}$ be a finite family of progressions 
$$Q_{i,j} = a_{i,j,1} [-M_{i,j,1},M_{i,j,1}] + \dots + a_{i,j,r_{i,j}} [-M_{i,j,r_{i,j}}, M_{i,j,r_{i,j}}]$$
with ranks $r_{i,j}$ at most $r$, in a cyclic group $\Z/N\Z$. Let $f$ lie in the unit ball of $L^\infty(\Z/\N\Z)$, and suppose that
$$
\E_{i,j \in I} \|f\|_{\Box^{d^2}_{(\eps Q_{i,k}+\eps Q_{j,l})_{1 \leq k \leq d, 1 \leq l \leq d}}(\Z/N\Z)} \leq \eps $$
for some $\eps > 0$.  Then
$$
\E_{i \in I} \|f\|_{\Box^{d}_{Q_{i,1},\dots,Q_{i,d}}(\Z/N\Z)} \leq c(\eps) 
$$
where $c\colon (0,+\infty) \to (0,+\infty)$ is a function such that $c(\eps) \to 0$ as $\eps \to 0$.  Furthermore, $c$ depends only on $r$ and $d$.
\end{theorem}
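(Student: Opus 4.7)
The plan is to derive this result directly from the qualitative concatenation theory developed in \cite{tz-concat}. The underlying heuristic is the standard concatenation principle: if $f$ has small averaged box norm with respect to shifts coming from \emph{sums} $Q+R$ of progressions drawn from two families, then $f$ has small averaged box norm with respect to shifts drawn from either family alone.

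First, I would reinterpret the hypothesis in a form amenable to concatenation. The $\Box^{d^2}$-norm indexed by pairs $(k,l) \in \{1,\dots,d\}^2$, with shift in slot $(k,l)$ equal to $\eps Q_{i,k} + \eps Q_{j,l}$, can be viewed as a tensor product structure: the index set $\{1,\dots,d\}^2$ factors as $\{1,\dots,d\} \times \{1,\dots,d\}$, with the first factor recording the dependence on $i,k$ and the second recording the dependence on $j,l$. The $\eps$-dilations play no essential role (they only change $M$ by a bounded factor in each progression, which is absorbed into the rank bound $r$ and the implicit dependence of $c$ on $r,d$).

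Next, I would invoke the main qualitative concatenation theorem of \cite{tz-concat} (the qualitative ``Bessel-type'' statement), which asserts in schematic form that control of an averaged product box norm $\E_{i,j} \|f\|_{\Box^{d+d}_{Q_{i,\bullet},R_{j,\bullet}}}$ with shifts drawn from two independent families of $d$ progressions each, under the bound $\eps$, forces $\E_i \|f\|_{\Box^{d}_{Q_{i,1},\dots,Q_{i,d}}} \leq c(\eps)$ with $c(\eps) \to 0$. Applying this with the two families being $(Q_{i,1},\dots,Q_{i,d})_{i \in I}$ and $(Q_{j,1},\dots,Q_{j,d})_{j \in I}$, the sum structure in the shifts $\eps Q_{i,k} + \eps Q_{j,l}$ matches precisely the hypothesis of the concatenation theorem, so the desired conclusion follows.

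The principal obstacle, and the reason the result is stated qualitatively, is the need to iterate the concatenation theorem across all $d$ slots simultaneously while tracking the rank growth that occurs when one passes from individual progressions to sums. Because the bounded-rank hypothesis is stable under sums (the rank only multiplies by a bounded factor), and because qualitative smallness composes under a finite number of applications of $c$, no explicit polynomial tracking is required --- any failure of $\E_i \|f\|_{\Box^{d}_{Q_{i,1},\dots,Q_{i,d}}}$ to tend to zero would, by contradiction through the concatenation inequality, force $\E_{i,j}\|f\|_{\Box^{d^2}_{(\eps Q_{i,k}+\eps Q_{j,l})_{k,l}}}$ to remain bounded away from zero, contradicting the hypothesis. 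The dependence of $c$ on $r$ and $d$ alone (and not on $|I|$ or $N$) is inherited from the corresponding dependence in the concatenation theorem, which is the central content of \cite{tz-concat}.
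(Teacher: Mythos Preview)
Your approach is essentially the same as the paper's: the paper's entire proof is the one-line citation ``See \cite[Theorem 1.28]{tz-concat}'', and you correctly identify that the statement is a direct instance of the qualitative concatenation machinery developed there. Your heuristic discussion is reasonable, though note a minor slip: you describe the concatenation hypothesis as a $\Box^{d+d}$ norm with separate shift families, whereas the actual hypothesis (and the form in \cite{tz-concat}) involves a $\Box^{d^2}$ norm with \emph{sum} shifts $\eps Q_{i,k}+\eps Q_{j,l}$ --- these are not the same structure, so your claim that they ``match precisely'' is not quite right, but since the result is literally \cite[Theorem 1.28]{tz-concat} no derivation is needed.
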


\begin{proof} See \cite[Theorem 1.28]{tz-concat}.
\end{proof}

We combine this theorem with Theorem \ref{mung} to obtain our first result controlling an averaged Gowers norm by global Gowers norms.

\begin{theorem}[Global Gowers norms control averaged Gowers norms]\label{avg-norm}  Let $d,r,D \geq 1$ be natural numbers, let $d_0$ be an integer with $1 \leq d_0 \leq d$, and suppose that $D_* \geq 1$ is a natural number that is sufficiently large depending on $d,r,D$.  Let $\eps > 0$, and let $\delta>0$ be a quantity that is sufficiently small depending on $d,r,D,\eps$.  Let $A$ be sufficiently large depending on $d,r,D,D_*,\eps,\delta$.  For each $j=1,\dots,D$, let $P_j \in \Z[\h_1,\dots,\h_r]$ be a polynomial of degree between $d_0-1$ and $d-1$ inclusive with coefficients that are integers of magnitude at most $A$.  Let $M$ be sufficiently large depending on $d,r,D,D_*,\eps,\delta,A$.  Let $N$ be a quantity larger than $A^{-1} M^d$.  Let $f \colon \Z/N\Z \to \R$ be a function bounded in magnitude by $1$ such that
\begin{equation}\label{cat}
 \|f\|_{U^{D_*}_{q[A^{-2D_*} M^{d_0}]}} \leq \delta
\end{equation}
for all $1 \leq q \leq A^{D_*}$.  Then
\begin{equation}\label{cat-con}
 \E_{\vec h \in [M]^r} \|f\|_{\Box^D_{P_1(\vec h)[-M,M],\dots,P_D(\vec h)[-M,M]}} \leq \eps.
\end{equation}
Here the arithmetic progressions are viewed as multisets in $\Z/N\Z$.
\end{theorem}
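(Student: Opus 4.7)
The plan is to combine Theorem~\ref{besu-box} with Theorem~\ref{mung}: iterate the former to enlarge the rank of the generalized progressions appearing inside the averaged box norm until the latter can be invoked to produce an approximate translation symmetry at scale $M^{d_0}$, which can then be converted into the Gowers uniformity control supplied by the hypothesis \eqref{cat}.

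First I iterate Theorem~\ref{besu-box} a bounded number $T = T(d,r,D)$ of times. Each iteration squares the box norm exponent and (roughly) doubles the number of polynomial progressions appearing as summands inside each progression of the box norm. Starting from the averaged $\Box^D$ norm over $P_1(\vec h)[-M,M],\dots,P_D(\vec h)[-M,M]$, after $T$ iterations we are reduced to bounding
$$
\E_{\vec h^{(1)},\dots,\vec h^{(2^T)} \in [M]^r} \|f\|_{\Box^{D^{2^T}}_{(\sum_{i=1}^{2^T} \alpha_T P_{j_i}(\vec h^{(i)})[-M,M])_{(j_1,\dots,j_{2^T}) \in [D]^{2^T}}}},
$$
where $\alpha_T > 0$ is a small contraction factor from the repeated $\eps$-scalings. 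I choose $T$ so that $2^T \geq KD$ with $K = K(d,r)$ the threshold demanded by Theorem~\ref{mung}; the resulting constraint $D^{2^T} \leq D_*$ is consistent with the hypothesis since $D_*$ is sufficiently large depending on $d,r,D$.

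Next I invoke Theorem~\ref{mung} (applied with $d$ there replaced by $d_0$) on each sum-progression. One may assume at least one $P_{j_0}$ has degree exactly $d_0 - 1$, otherwise $d_0$ can be increased and the argument rerun. For the overwhelming majority of tuples $(j_1,\dots,j_{2^T}) \in [D]^{2^T}$, a pigeonhole/Chernoff estimate forces the index set $S \coloneqq \{i : \deg P_{j_i} = d_0-1\}$ to have cardinality at least $K$; atypical tuples contribute a negligible fraction to the average. For such typical tuples, Theorem~\ref{mung} applied to the subsum over $S$ shows that this subsum is, after averaging over the relevant $\vec h^{(i)}$, approximately translation invariant in total variation under shifts by
$$
qQ_0 \coloneqq q[-A^{-2K}(\alpha_T M)^{d_0},\, A^{-2K}(\alpha_T M)^{d_0}]
$$
for some $1 \leq q \leq A^K$. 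The contraction property \eqref{dtv} then extends this approximate invariance from the subsum to the full sum-progression.

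Finally I transfer this approximate translation invariance into a bound on the averaged box norm by a Gowers uniformity norm at scale $qQ_0$. This proceeds by an inductive swap: I replace each of the $D^{2^T}$ sum-progressions by $qQ_0$ one at a time in the Gowers inner product expression of the box norm, paying for each replacement by a Cauchy--Schwarz--Gowers step \eqref{csg} together with the total-variation estimates \eqref{say} and \eqref{dtv-2}. After all swaps the expression reduces to $\|f\|_{U^{D^{2^T}}_{qQ_0}}$, which by monotonicity \eqref{mono} (since $D^{2^T} \leq D_*$) is at most $\|f\|_{U^{D_*}_{qQ_0}} \leq \delta$ thanks to \eqref{cat}; taking $\delta$ sufficiently small relative to $\eps$ and the accumulated quantitative constants yields the target bound. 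The main obstacle is precisely this final step, which requires controlling the cumulative total-variation error across $D^{2^T}$ swaps together with the negligible but nonzero contribution from the bad tuples identified earlier.
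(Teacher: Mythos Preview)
Your overall architecture matches the paper's: iterate Theorem~\ref{besu-box} $\log_2 k$ times to reach a $\Box^{D^k}$ norm whose directions are sums $\sum_{j=1}^k P_{i_j}(\vec h_j)[-\sigma M,\sigma M]$, then use Theorem~\ref{mung} to get approximate translation invariance, then bound by a global $U^{D^k}$ norm. But there is a genuine gap in your second step.

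The tuples $(j_1,\dots,j_{2^T}) \in [D]^{2^T}$ are not being averaged over; they are the \emph{indices of the directions} of a single $\Box^{D^{2^T}}$ norm. To control that norm you need approximate $qQ_0$-invariance for \emph{every} direction simultaneously, not just for ``most'' tuples. And there are always bad tuples: if some $P_j$ has degree strictly greater than $d_0-1$, then the constant tuple $(j,j,\dots,j)$ has $|S|=0$, so your Chernoff argument cannot dispose of it. Your fallback of ``increasing $d_0$'' does not help either, since the hypothesis \eqref{cat} is stated at scale $M^{d_0}$ and does not automatically transfer to a larger $d_0$. The paper handles this by a different pigeonhole: for \emph{each} fixed tuple $\vec i$, it finds some $d_1 = d_1(\vec i) \in [d_0,d]$ such that at least $k/d$ of the $P_{i_1},\dots,P_{i_k}$ have degree exactly $d_1-1$, applies Theorem~\ref{mung} at scale $M^{d_1}$ to get invariance under $q_{\vec i}[-A^{-2k}M^{d_1},A^{-2k}M^{d_1}]$, and then uses \eqref{dtv-2} to pass this invariance down to the common smaller scale $q[A^{-2D_*}M^{d_0}]$ with $q \coloneqq \prod_{\vec i} q_{\vec i}$. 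This works for every $\vec i$ without exception.

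A second, smaller issue: your ``inductive swap'' that replaces each $Q_{\vec i}(\vec h)$ by $qQ_0$ is not what the total-variation bound from Theorem~\ref{mung} gives you. What you have is that $d_{TV}(Q_{\vec i}(\vec h), Q_{\vec i}(\vec h)+qQ_0)$ is small, which says the direction is approximately \emph{invariant} under $qQ_0$-shifts, not that it is \emph{close} to $qQ_0$. The paper exploits this correctly: it uses \eqref{say} to replace each direction $Q_{\vec i}(\vec h)$ by $Q_{\vec i}(\vec h)+qQ_0$, then expands the resulting box norm as an average over shifts coming from the $Q_{\vec i}$-parts times a $U^{D^k}_{qQ_0}$ inner product, and applies \eqref{csg} once to bound by $\|f\|_{U^{D^k}_{q[A^{-2D_*}M^{d_0}]}}$.
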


A key point here is that $\delta$ does not depend on $A$.

\begin{proof}  Let $k$ be a power of two that is sufficiently large depending on $d,r,D$; we assume $D_*$ sufficiently large depending on $k$.  Let $\sigma > 0$ be a quantity that is sufficiently small depending on $d,r,k,D,\eps$; we assume $\delta$ sufficiently small depending on $d,r,k,D,\sigma$.  We will show that
\begin{equation}\label{frog}
 \E_{\vec h_1,\dots,\vec h_k \in [M]^r} \|f\|_{\Box^{D^k}_{(\sum_{j=1}^k P_{i_j}(\vec h_j)[-\sigma M,\sigma M])_{i_1,\dots,i_k \in [D]}}} \ll \sigma;
\end{equation}
the claim will then follow from $\log_2 k$ applications of Theorem \ref{besu-box}.

We abbreviate $\vec i \coloneqq  (i_1,\dots,i_k)$, $\vec h \coloneqq  (\vec h_1,\dots, \vec h_k)$, and 
\begin{equation}\label{qih}
Q_{\vec i}(\vec h) \coloneqq  \sum_{j=1}^k P_{i_j}(\vec h_j)[-\sigma M,\sigma M].
\end{equation}
The left-hand side of \eqref{frog} can then be written as
$$ \E_{\vec h \in [M_0]^{kr}} \|f\|_{\Box^{D^k}_{(Q_{\vec i}(\vec h))_{\vec i \in [D]^k}}}.$$
By H\"older's inequality, we may upper bound this by
$$ \left(\E_{\vec h \in [M_0]^{kr}} \|f\|_{\Box^{D^k}_{(Q_{\vec i}(\vec h))_{\vec i \in [D]^k}}}^{2^{D^k}}\right)^{1/2^{D^k}}.$$
It will thus suffice to show that
$$ \|f\|_{\Box^{D^k}_{(Q_{\vec i}(\vec h))_{\vec i \in [D]^k}}}^{2^{D^k}} \ll \sigma^{2^{D^k}}$$
for all but $O( \sigma^{2^{D^k}} M_0^{kr} )$ of the $\vec h$ in $[M_0]^{kr}$.

For each $\vec i \in [D]^k$, the multiset $Q_{\vec i}(\vec h)$ is constructed using the polynomials $P_{i_1},\dots,P_{i_k}$.  By the pigeonhole principle, there exists $d_0 \leq d_1 \leq d$ (depending on $\vec i$) such that at least $k/d$ of the polynomials $P_{i_1},\dots,P_{i_k}$ have degree exactly $d_1-1$.  Let $P_{i_{j_1}},\dots,P_{i_{j_{k'}}}$ denote these polynomials, then we can write
$$ Q_{\vec i}(\vec h) = Q'_{\vec i}(\vec h) + Q''_{\vec i}(\vec h)$$
where
$$ Q'_{\vec i}(\vec h) \coloneqq  \sum_{l=1}^{k'} P_{i_{j_l}}(\vec h_{j_l})[-\sigma M,\sigma M]$$
and
$$ Q''_{\vec i}(\vec h) \coloneqq  \sum_{j \in \{1,\dots,k\} \backslash \{j_1,\dots,j_{k'}\}} P_{i_{j}}(\vec h_{j})[-\sigma M,\sigma M].$$
Applying Theorem \ref{mung} and Markov's inequality with these polynomials $P_{i_1},\dots,P_{i_k}$ (and $d$ replaced by $d_1$), we conclude that
for all but  $O( \sigma^{2^{D^k}} M_0^{kr} )$ of the $\vec h$ in $[M_0]^{kr}$, there exists $1 \leq q_{\vec i} \leq A^k$ (depending on $\vec h$) for each $\vec i \in [D]^k$ such that
$$
d_{TV}(Q'_{\vec i}(\vec h), Q'_{\vec i}(\vec h) + q_{\vec i} [-A^{-2k} N^{d_1/d}, A^{-2k} N^{d_1/d}]) \ll_{d,r,k,D,\sigma} A^{-1}.
$$
Applying \eqref{dtv}, we conclude that
$$
d_{TV}(Q_{\vec i}(\vec h), Q_{\vec i}(\vec h) + q_{\vec i} [-A^{-2k} N^{d_1/d}, A^{-2k} N^{d_1/d}]) \ll_{d,r,k,D,\sigma} A^{-1}.
$$
If $q$ is the product of all the $q_{\vec i}$, then from direct computation
$$ d_{TV}( q_{\vec i} [-A^{-2k} N^{d_1/d}, A^{-2k} N^{d_1/d}], q_{\vec i} [-A^{-2k} N^{d_1/d}, A^{-2k} N^{d_1/d}] + q [A^{-2D_*} N^{d_0/d}] ) \leq A^{-1}$$
and hence by \eqref{dtv-2}
$$ d_{TV}(Q_{\vec i}(\vec h), Q_{\vec i}(\vec h) + q [A^{-2D_*} M^{d_0}]) \ll_{d,r,k,D,\sigma} A^{-1}.$$
From many applications of \eqref{say}, we thus have
$$ 
\|f\|_{\Box^{D^k}_{(Q_{\vec i}(\vec h))_{\vec i \in [D]^k}}}^{2^{D^k}} = \|f\|_{\Box^{D^k}_{(Q_{\vec i}(\vec h)+ q [A^{-2D_*} M^{d_0}])_{\vec i \in [D]^k}}}^{2^{D^k}} + O_{d,r,k,D,\sigma}(A^{-1}) 
$$
so it will suffice to show that
$$  \|f\|_{\Box^{D^k}_{(Q_{\vec i}(\vec h)+ q [A^{-2D_*} M^{d_0}])_{\vec i \in [D]^k}}}^{2^{D^k}} \ll  \sigma^{2^{D^k}}.$$
The left-hand side expands as
$$ \E_{a_{0,\vec i}, a_{1,\vec i} \in Q_{\vec i}(\vec h) \forall \vec i \in [D]^k}
\left\langle T^{\sum_{\vec i \in [D]^k} a_{\omega_{\vec i},\vec i}} f \right\rangle_{U^{D^k}(q [A^{-D_*} M^{d_0}])} $$
where $T^h f(x) \coloneqq  f(x+h)$ is the shift of $f$ by $h$.  By the Cauchy-Schwarz-Gowers inequality \eqref{csg} and the translation-invariance of the Gowers norms, we can bound this by
$$ \| f \|_{U^{D^k}(q [A^{-2D_*} M^{d_0}])}$$
and the desired bound now follows from \eqref{cat} and the monotonicity of the Gowers norms.
\end{proof}

Theorem \ref{avg-norm} is not directly applicable to our applications involving primes, because of the requirement that the function $f$ is bounded in magnitude by one.  In principle, the ``transference principle'' introduced in \cite{gt-primes} should be able to relax this requirement to allow for unbounded $f$ (so long as $f$ is still bounded pointwise by a suitably ``pseudorandom'' majorant), but this turns out to require a fair amount of additional argument.  To begin this task, we present a ``dual'' form of Theorem \ref{avg-norm}, which roughly speaking asserts that dual functions associated to averaged Gowers norms can be approximated by polynomial combinations of dual functions associated with global Gowers norms.

\begin{theorem}[Dual function approximation]\label{dfa}  Let $d,r,D \geq 1$ be natural numbers; let $d_0$ be an integer with $1 \leq d_0 \leq d$.  Suppose that $D_* \geq 1$ is a natural number that is sufficiently large depending on $d,r,D$.  Let $\eps > 0$, and let $K>0$ be a quantity that is sufficiently large depending on $d,r,D,D_*,\eps$.  Let $A$ be sufficiently large depending on $d,r,D,D_*,\eps,K$.  For each $j=1,\dots,D$, let $P_j \in \Z[\h_1,\dots,\h_r]$ be a polynomial of degree between $d_0-1$ and $d-1$ inclusive
 with coefficients that are integers of magnitude at most $A$.  Let $M$ be sufficiently large depending on $d,r,D,D_*,\eps,K,A$, and let $M_0, N$ be such that $A^{-1} M \leq M_0 \leq AM$ and $N \geq A^{-1} M^d$.  Let $f \colon \Z/N\Z \to \R$ be a function bounded in magnitude by $1$, and define the ``averaged dual function''
\begin{equation}\label{stim}
 F \coloneqq 
\E_{\vec h \in [M_0]^r} {\mathcal D}^D_{P_1(\vec h)[-M,M],\dots,P_D(\vec h)[-M,M]}(f).
\end{equation}
Then one can find a function $\tilde F \colon \Z/N\Z \to [-2,2]$ with
\begin{equation}\label{Trick}
 \|F - \tilde F \|_{L^2(\Z/N\Z)} \leq \eps
\end{equation}
such that $\tilde F$ is a linear combination
\begin{equation}\label{fkcf}
 \tilde F = \sum_{i=1}^k c_i F_i
\end{equation}
with $k \leq K$, and for each $i=1,\dots,k$, $c_i$ is a scalar with $|c_i| \leq 1$, and $F_i \colon \Z/N\Z \to \C$ is a function of the form
$$ F_i = \prod_{j=1}^{k_i} {\mathcal D}^{D_*}_{q_{i,j}[A^{-2D_*} M^{d_0}]}( f_{i,j} )$$
with $k_i \leq K$, and for each $j=1,\dots,k$, $q_{i,j}$ is a natural number with $q_{i,j} \leq A^{D_*}$, and $f_{i,j} \colon \Z/N\Z \to [-1,1]$ is a function bounded in magnitude by $1$.  As before, the progressions are viewed as multisets in $\Z/N\Z$.
\end{theorem}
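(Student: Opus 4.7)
The plan is to prove Theorem \ref{dfa} by dualizing Theorem \ref{avg-norm} via an energy-increment / Hahn--Banach argument, in the style of the structure theorems of \cite{gt-primes}. The core dichotomy is as follows: at each stage of the iteration, either the current approximation $\tilde F_t$ already satisfies $\|F-\tilde F_t\|_{L^2}\leq\eps$, or the residual $g_t := F-\tilde F_t$ has significant inner product with some product of basic dual functions $\mathcal{D}^{D_*}_{q[A^{-2D_*}M^{d_0}]}(\cdot)$. In the former case we halt; in the latter, specializing the input of the dual function to $g_t/C$ for an appropriate $L^\infty$-bound $C$ on $g_t$ and using the identity $\langle g_t,\mathcal{D}^{D_*}_Q(g_t/C)\rangle = C^{-(2^{D_*}-1)}\|g_t\|_{U^{D_*}_Q}^{2^{D_*}}$ forces small global Gowers norms for $g_t$. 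Theorem \ref{avg-norm} then delivers small averaged box norms $\E_{\vec h}\|g_t\|_{\Box^D_{P_1(\vec h)[-M,M],\dots,P_D(\vec h)[-M,M]}}$, and the Cauchy--Schwarz--Gowers inequality \eqref{csg} (with $g_t$ in one slot and $f$ in the remaining $2^D-1$ slots) forces $|\langle g_t,F\rangle|$ to be small; combined with orthogonality $\langle g_t,\tilde F_t\rangle=0$, this pins down $\|g_t\|_{L^2}\leq\eps$.

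Concretely, I would let $\mathcal{F}$ denote the family of products $\prod_{j=1}^{k'}\mathcal{D}^{D_*}_{q_j[A^{-2D_*}M^{d_0}]}(f_j)$ with $k'\leq K$, $q_j\leq A^{D_*}$, and $|f_j|\leq 1$; each such function is pointwise bounded by $1$. I would construct $\tilde F$ iteratively: starting with $\tilde F_0 = 0$, at each stage either halt (if $\|F-\tilde F_t\|_{L^2}\leq\eps$) or, having found $F_{t+1}\in\mathcal{F}$ with $|\langle g_t,F_{t+1}\rangle|\geq\tau$ (for $\tau=\tau(\eps)$ chosen below), take $\tilde F_{t+1}$ to be the $L^2$-orthogonal projection of $F$ onto $\mathrm{span}(F_1,\dots,F_{t+1})$ so that $\langle g_{t+1},F_i\rangle=0$ for all $i\leq t+1$ and $\|g_{t+1}\|_{L^2}^2\leq\|g_t\|_{L^2}^2-\tau^2$. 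The iteration terminates in $T\leq\tau^{-2}$ steps. At a terminal stage where no such $F_{t+1}$ exists, applying the failure of the search to $F'=\mathcal{D}^{D_*}_{q[A^{-2D_*}M^{d_0}]}(g_t/C)$ (valid once $C$ is any $L^\infty$-bound on $g_t$, so that $|g_t/C|\leq 1$) yields $\|g_t/C\|_{U^{D_*}_{q[A^{-2D_*}M^{d_0}]}}\leq(\tau/C)^{1/2^{D_*}}$ uniformly in $q\leq A^{D_*}$. Choosing $\tau$ small enough to activate Theorem \ref{avg-norm} applied to $g_t/C$ with target output $\eps''$, one obtains $\E_{\vec h}\|g_t/C\|_{\Box^D_{\dots}}\leq\eps''$, hence $|\langle g_t,F\rangle|\leq C\eps''$ by \eqref{csg}; since $\langle g_t,\tilde F_t\rangle=0$, this collapses to $\|g_t\|_{L^2}^2=\langle g_t,F\rangle\leq\eps^2$ for suitable parameter choices.

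The main technical obstacle is to reconcile this argument---which naturally uses orthogonal projection, potentially producing coefficients exceeding $1$ and unbounded $L^\infty$-norms of $\tilde F_t$ (and hence of $g_t$, preventing the clean choice of $C$ above)---with the bounded-coefficient, pointwise-bounded conclusion of the statement. One resolution is a two-step procedure: first use the energy-increment argument above to establish that $F$ lies within $L^2$-distance $\eps$ of $K\cdot\mathrm{conv}(\pm\mathcal{F})$ for some $K=K(\eps)$, where the $L^\infty$-control on the residual is obtained by truncating the Hahn--Banach witness and absorbing the tail using Chebyshev plus the boundedness of the dual functions; second, invoke Maurey's lemma (or a Carath\'eodory-style sparsification) to re-express any point of $K\cdot\mathrm{conv}(\pm\mathcal{F})$ up to $L^2$-error $\eps$ as an average of $O_{K,\eps}(1)$ elements of $\pm K\mathcal{F}$, yielding a combination with coefficients in $[-1,1]$ and $O_{\eps}(1)$ terms. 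The remaining pointwise bound $|\tilde F|\leq 2$ is enforced by truncating pointwise to $[-2,2]$ (an $L^2$-contraction towards $F$ since $|F|\leq 1$) and absorbing the truncation back into the sum-of-products form, using that $\mathcal{F}$ is closed under pointwise multiplication by $[-1,1]$-valued functions (inserted into the dual function arguments), at the cost of increasing $K$ by factors depending only on $d,r,D,D_*,\eps$.
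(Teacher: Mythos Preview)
Your overall strategy—energy increment driven by Theorem \ref{avg-norm} and the Cauchy--Schwarz--Gowers inequality—is the same as the paper's. You have also correctly identified the crux of the difficulty: maintaining $L^\infty$-control on the residual $g_t = F - \tilde F_t$ throughout the iteration. However, your proposed resolution of this difficulty has a genuine gap.

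The problem with orthogonal projection is exactly as you suspect: the coefficients of $\tilde F_t$ in the basis $F_1,\dots,F_t$ are governed by the inverse Gram matrix of the $F_i$, and there is no a priori bound on its condition number. Consequently your constant $C$ bounding $\|g_t\|_{L^\infty}$ is not controlled in terms of $\tau$ (or $\eps$) alone, so you cannot choose $\tau$ in advance so that $(\tau/C)^{1/2^{D_*}}$ meets the threshold $\delta$ required by Theorem \ref{avg-norm}; crucially, $\delta$ there must be chosen \emph{before} $A$, and hence before the functions $F_i$ are specified. Your suggested fix via Maurey sparsification and pointwise truncation does not escape this circularity, and the closure claim you rely on—``$\mathcal{F}$ is closed under pointwise multiplication by $[-1,1]$-valued functions (inserted into the dual function arguments)''—is false as stated: a dual function $\mathcal{D}^{D_*}_Q(f)(x)$ evaluates $f$ only at shifted points $x+\omega\cdot h$, never at $x$ itself, so multiplying by an arbitrary $g(x)$ cannot be absorbed.

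The paper repairs this in two moves. First, it abandons orthogonal projection in favour of a gradient-descent-style update $\tilde F \mapsto \tilde F - \sigma^{2/3}\mathcal{D}^{D_*}_{q[A^{-2D_*}M^{d_0}]}(g)$, and then immediately restores the pointwise bound $|\tilde F|\leq 2$ by replacing $\tilde F$ with $P(\tilde F)$, where $P$ is a Weierstrass polynomial approximating $t\mapsto\max(\min(t,1),-1)$. The point is that the family $\mathcal{F}$ \emph{is} closed under polynomial compositions: if $\tilde F=\sum_i c_i F_i$ with each $F_i$ a product of dual functions, then each monomial $\tilde F^m$ expands as a linear combination of products of the $F_i$, which are again products of dual functions. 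This closure under \emph{products}, not under arbitrary pointwise multiplication, is what makes the truncation legitimate at the cost only of enlarging $K$. Second, because gradient descent does not give $\langle g_t,\tilde F_t\rangle=0$ for free, the paper adds a separate increment case: whenever $|\langle \tilde F, F-\tilde F\rangle|>\delta$, one rescales $\tilde F\mapsto c\tilde F$ with the optimal scalar $c$ (followed again by Weierstrass truncation), which decreases the energy. At termination one therefore has both small Gowers norms \emph{and} $|\langle \tilde F, F-\tilde F\rangle|\leq\delta$, the latter replacing the exact orthogonality you would have obtained from projection.
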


\begin{proof}  Let $d,r,d_0,D,D_*,\eps,K,A,P_j,N,M_0,M,f,F$ be as above.
For any given $K'$, let ${\mathcal F}_{K'}$ denote the collection of all the functions $\tilde F$ that have a decomposition \eqref{fkcf} with the indicated properties, but with $K$ replaced by $K'$ throughout.  Our task is then to show that there exists $\tilde F \in {\mathcal F}_K$ such that $\|F - \tilde F \|_{L^2(\Z/N\Z)} \leq \eps$.

Let $\delta > 0$ be a sufficiently small quantity depending on $d,r,D,\eps$, and let $\sigma > 0$ be sufficiently small depending on $d,r,D,\eps,\delta$.  We will prove the following \emph{energy decrement} claim: if $K' \geq 0$ is a natural number, and $\tilde F \in {\mathcal F}_{K'}$ is such that either
\begin{equation}\label{footf} 
|\langle \tilde F, F-\tilde F \rangle_{L^2(\Z/N\Z)}| > \delta
\end{equation}
or
\begin{equation}\label{ftf}
 \|F - \tilde F\|_{U^{D_*}_{q[A^{-2D_*} M^{d_0}]}} > \delta
\end{equation}
for some $q \leq A^{D_*}$, then whenever $K''$ is sufficiently large depending on $\delta,\sigma,K'$, and $A$ sufficiently large depending on $D_*,\delta,\sigma,K',K''$, there exists $\tilde F' \in {\mathcal F}_{K''}$ such that
$$
\|F - \tilde F' \|_{L^2(\Z/N\Z)}^2 \leq \|F - \tilde F \|_{L^2(\Z/N\Z)}^2  - \sigma.$$
Applying this energy decrement claim iteratively starting from $K'=0$ and $\tilde F=0$ (which implies in particular that $\|F - \tilde F \|_{L^2(\Z/N\Z)}^2 \leq 1$), we obtain after at most $1/\sigma$ iterations that for $K$ large enough, there exists $\tilde F \in {\mathcal F}_K$ such that
\begin{equation}\label{ort}
|\langle \tilde F, F-\tilde F \rangle_{L^2(\Z/N\Z)}| \leq \delta
\end{equation}
and
$$ \|F - \tilde F\|_{U^{D_*}_{q[A^{-2D_*} M^{d_0}]}} \leq \delta
$$
for all $q \leq A^{D_*}$.  Applying Theorem \ref{avg-norm} (with $f$ set equal to $\frac{1}{3} (F - \tilde F)$, and adjusting $\eps$ and $\delta$ slightly), we conclude that
\begin{equation}\label{mod}
 \E_{\vec h \in [M_0]^r} \|F-\tilde F\|_{\Box^D_{P_1(\vec h)[-M,M],\dots,P_D(\vec h)[-M,M]}} \leq \eps^2/2.
\end{equation}
We can then use \eqref{stim}, the Cauchy-Schwarz-Gowers inequality \eqref{csg}, and \eqref{mod} to bound
\begin{align*}
\langle F, F - \tilde F \rangle_{L^2(\Z/N\Z)}
&= 
\E_{\vec h \in [M_0]^r} \langle {\mathcal D}^D_{P_1(\vec h)[-M,M],\dots,P_D(\vec h)[-M,M]}(f), F - \tilde F \rangle_{L^2(\Z/N\Z)} \\
&\leq
\E_{\vec h \in [M_0]^r} \| F - \tilde F \|_{\Box^D_{P_1(\vec h)[-M,M],\dots,P_D(\vec h)[-M,M]}}\\
&\leq \eps^2/2
\end{align*}
and hence by \eqref{ort} we conclude \eqref{Trick}.

It remains to prove the energy decrement claim. Let $K' \geq 0$, and let $\tilde F \in {\mathcal F}_{K'}$ obey either \eqref{footf} or \eqref{ftf}.  First suppose that \eqref{footf} holds.  From the Cauchy-Schwarz inequality this implies that 
\begin{equation}\label{caf}
\|\tilde F\|_{L^2(\Z/N\Z)} \geq \delta/3
\end{equation}
 (since we can bound the $L^\infty$, and hence the $L^2$, norm of $F - \tilde F$ by $3$).   If we let $c \tilde F$ be the orthogonal projection of $F$ to the one-dimensional space spanned by $\tilde F$, thus
$$ c \coloneqq  \frac{\langle F, \tilde F \rangle_{L^2(\Z/N\Z)}}{\|\tilde F\|_{L^2(\Z/N\Z)}^2}.$$
From \eqref{caf} we see in particular that
\begin{equation}\label{cbig}
|c| \leq \frac{100}{\delta}.
\end{equation}
Also, since
$$ \langle \tilde F, F - \tilde F \rangle_{L^2(\Z/N\Z)} = \langle \tilde F, c \tilde F - \tilde F \rangle_{L^2(\Z/N\Z)}$$
we see that
$$ |c-1| \geq \frac{\delta}{10}.$$
By Pythagoras' theorem, we conclude that
$$ \|F - c\tilde F \|_{L^2(\Z/N\Z)}^2 \leq \|F - \tilde F \|_{L^2(\Z/N\Z)}^2 - \frac{\delta^3}{100}.$$
We would like to take $c \tilde F$ to be the function $\tilde F'$, but there is the technical difficulty that $c \tilde F$ need not take values in $[-2,2]$.  However, from \eqref{cbig} we see that $c \tilde F$ takes values in $[-200/\delta,200/\delta]$.  We now use the Weierstrass approximation theorem to find a polynomial $P$ of degree $O_{\delta,\sigma}(1)$ and coefficients $O_{\delta,\sigma}(1)$ such that
$$ |P(t) - \max(\min(t,-1),1)| \leq \sigma^2 $$
for all $t \in [-200/\delta, 200/\delta]$.  Then $P( c \tilde F )$ takes values in $[-2,2]$ and lies in ${\mathcal F}_{K''}$ for $K''$ large enough depending on $\delta,\sigma,K'$, and
$$ \|P(c\tilde F) - \max(\min(c\tilde F,-1),1) \|_{L^\infty(\Z/N\Z)} \leq \sigma^2 $$
and thus by the triangle inequality
\begin{align*}
 \|F - P(c\tilde F) \|_{L^2(\Z/N\Z)}^2 
&\leq \|F - \max(\min(\tilde F,-1),1) \|_{L^2(\Z/N\Z)}^2 - \sigma \\
&\leq \|F - \tilde F \|_{L^2(\Z/N\Z)}^2 - \sigma 
\end{align*}
where the last line comes from the fact that $F$ takes values in $[-1,1]$ and the map $t \mapsto \max(\min(t,-1),1)$ is a contraction. 
This gives the required claim in the case that \eqref{footf} holds.

Now suppose that \eqref{ftf} holds for some $q \leq A^{D_*}$.
If we write $g \coloneqq  \frac{1}{3} (F-\tilde F)$, then $g$ takes values in $[-1,1]$, and
$$ \|g\|_{U^{D_*}_{q[A^{-2D_*} M^{d_0}]}} > \delta/3.$$
This implies that
$$ \langle g, {\mathcal D}^{D_*}_{q[A^{-2D_*} M^{d_0}]} \rangle_{L^2(\Z/N\Z)} > (\delta/3)^{2^{D_*}}.$$
and thus
$$ \langle F - \tilde F, {\mathcal D}^{D_*}_{q[A^{-2D_*} M^{d_0}]} g \rangle_{L^2(\Z/N\Z)} > (\delta/3)^{2^{D_*}}/3.$$
If we then set 
$$ \tilde F_1 \coloneqq  \tilde F - \sigma^{2/3} {\mathcal D}^{D_*}_{q[A^{-2D_*} M^{d_0}]} g$$
(say) then from the cosine rule we have
$$ \|F - \tilde F_1 \|_{L^2(\Z/N\Z)}^2 \leq \|F - \tilde F \|_{L^2(\Z/N\Z)}^2 - 2\sigma$$
(say).  We would like to take $\tilde F_1$, but again there is the slight problem that $\tilde F_1$ can take values a little bit outside of $[-2,2]$.  However if one sets $\tilde F \coloneqq  P(\tilde F_1)$ where $P$ is the polynomial constructed previously, one obtains the desired claim.
\end{proof}

\section{Averaged Gowers uniformity of a $W$-tricked von Mangoldt function}\label{agu}

In this section we require the following parameters, chosen in the following order.
\begin{itemize}
\item First, we let $d,r,D \geq 1$ be arbitrary natural numbers.  Let $d_0$ be an integer with $1 \leq d_0 \leq d$.
\item Then, we let $D_*$ be a natural number that is sufficiently large depending on $d,r,D$.
\item Then, we choose $\kappa > 0$ to be a real number that is sufficiently small depending on $d,r,D,D_*$.
\item We let $N'$ be a large integer (going to infinity), and let $w = w(N')$ be a sufficiently slowly growing function of $N'$ (the choice of $w$ can depend on $d,r,D,D_*,\kappa$).  We use $o(1)$ to denote any quantity that goes to zero as $N' \to \infty$.  
\item We set
\begin{equation}\label{wap}
W \coloneqq  \prod_{p \leq w} p.
\end{equation}
and $N \coloneqq  \lfloor N'/W\rfloor$.  We also let $b \in [W]$ be a natural number coprime to $W$.
\item We set
$$ A \coloneqq  W^{1/\kappa}$$
and
\begin{equation}\label{rk}
 R \coloneqq  N^\kappa.
\end{equation}
\item We select a quantity $M$ such that
$$ \log^{1/\kappa} N \leq M \leq (A N)^{1/d} $$
\item Finally, for each $1 \leq j \leq J$, we select a polynomial $P_j \in \Z[\h_1,\dots,\h_r]$ of degree between $d_0-1$ and $d-1$ inclusive with coefficients that are integers of magnitude at most $A$.  
\end{itemize}

The reader may wish to keep in mind the hierarchy
$$ d,r,D \ll D_* \ll 1/\kappa \ll W \ll A \ll R \ll N$$
and
$$ A, \log^{1/\kappa} N \ll M \ll N$$
where the quantities $W,A,R,M,N$ go to infinity (at different rates) as $N \to \infty$, while $d,r,D,D_*,\kappa$ do not depend on $N$.  Note that we allow

For each $b \in [W]$ coprime to $W$, define the normalised von Mangoldt functions $\Lambda'_{b,W} \colon \Z/N\Z \to \R$ by the formula
\begin{equation}\label{bawn}
 \Lambda'_{b,W}(x) \coloneqq  \frac{\phi(W)}{W} \Lambda'(Wx+b)
\end{equation}
for $x \in [R,N]$ (where we embed $[R,N]$ into $\Z/N\Z$), with $\Lambda'_{b,W}(x)$ equal to zero for other choices of $x$.  Here $\Lambda'(n)$ is the restriction of the von Mangoldt function to primes, thus $\Lambda'(p) = \log p$ for primes $p$, and $\Lambda'(n)=0$ for non-prime $n$.

The objective of this section is to establish the following bound:

\begin{theorem}[Averaged Gowers uniformity of von Mangoldt]\label{l1}  Let the notation and hypotheses be as above.
If $f \colon \Z/N\Z \to \R$ is any function with the pointwise bound $0 \leq f \leq \Lambda'_{b,W}$, then there exists a function $g \colon \Z/N\Z \to \R$ with the pointwise bounds $0 \leq f' \ll 1$ such that
\begin{equation}\label{hmr}
 \E_{\vec h \in [M]^r} \| f - f' \|_{\Box^D_{P_1(\vec h)[-M,M], \dots, P_D(\vec h)[-M,M]}} = o(1).
\end{equation}
Furthermore, in the special case where $d_0=d$, $M \geq (A^{-1} N)^{1/d}$, and $f = \Lambda'_{b,W}$, we may take $f'=1$.
\end{theorem}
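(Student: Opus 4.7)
\emph{Proof proposal.} The plan is to combine Theorem \ref{dfa} (dual function approximation) with a Conlon--Fox--Zhao style densification/transference argument, using as input (i) the global Gowers uniformity $\|\Lambda'_{b,W}-1\|_{U^{D_*}_{q[A^{-2D_*}M^{d_0}]}} = o(1)$ for all $1 \leq q \leq A^{D_*}$, which is a consequence of the Green--Tao--Ziegler results on linear equations in primes at the relevant scales, together with (ii) the standard linear-forms condition on a pseudorandom majorant $\nu$ dominating $\Lambda'_{b,W}$. I first dispatch the easy case $d_0 = d$, $M \geq (A^{-1}N)^{1/d}$, $f = \Lambda'_{b,W}$ by setting $f' = 1$: here the scale $A^{-2D_*}M^{d_0}$ is comparable to $N$, so the requisite global Gowers norms of $\Lambda'_{b,W}-1$ at scale $q[A^{-2D_*}M^{d_0}]$ are controlled by its standard global Gowers norm on $\Z/N\Z$ and are $o(1)$. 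Then \eqref{hmr} for $\Lambda'_{b,W}-1$ follows from a transferred form of Theorem \ref{avg-norm}: inspecting its proof, the only step requiring $|f|\le 1$ is a Cauchy--Schwarz pairing of $f$ against products of global dual functions of bounded functions, and that pairing can be redone with $f$ bounded by $\nu$ at the cost of $o(1)$ errors coming from the linear-forms condition on $\nu$.

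For the general case I would construct $f'$ by an energy-increment iteration, at each stage $n$ writing $f'_n$ as a polynomial combination (of bounded degree and coefficients, truncated to $[0, O(1)]$ via a Weierstrass polynomial) of products $\prod_j \mathcal{D}^{D_*}_{q_{ij}[A^{-2D_*}M^{d_0}]}(f_{ij})$ of global dual functions of bounded $f_{ij}$. If \eqref{hmr} failed for $f - f'_n$, then by Cauchy--Schwarz--Gowers applied over $\vec h$ and the dual-function identity we would have $\langle f - f'_n, F_n\rangle_{L^2} \gg_{\eps} 1$ for the averaged dual function $F_n = \E_{\vec h} \mathcal{D}^{D}_{P_1(\vec h)[-M,M],\dots,P_D(\vec h)[-M,M]}(g_n)$ of a rescaling $g_n$ of $f - f'_n$. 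Applying Theorem \ref{dfa} to replace $F_n$ by an approximant $\tilde F_n = \sum_i c_i \prod_j \mathcal{D}^{D_*}_{q_{ij}[A^{-2D_*}M^{d_0}]}(f_{ij})$ (controlling the $L^2$ approximation error when paired against $f$ via the linear-forms condition on $\nu$), and noting that each dual factor is automatically $L^\infty$-bounded by $1$, one finds $\tilde F_n$ is bounded by an absolute constant. A small multiple of $\tilde F_n$, truncated to remain in $[0, O(1)]$ via Weierstrass approximation while staying in the polynomial-combination class, is added to $f'_n$ to produce $f'_{n+1}$ with an energy gain $\langle f, f'_{n+1}\rangle_{L^2} \geq \langle f, f'_n\rangle_{L^2} + c_\eps$. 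Since $\langle f, f'_n\rangle_{L^2} \le \|f'_n\|_{L^\infty} \|f\|_{L^1} = O(1)$ (using $\E\Lambda'_{b,W} = 1+o(1)$), the iteration terminates in $O_\eps(1)$ steps, giving the desired bounded $f'$.

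The main obstacle I anticipate is the careful management of the transference through the iteration: inner products of $f$ (bounded only by $\Lambda'_{b,W}$) against the approximants $\tilde F_n$ must be compared, to $o(1)$ error, with the corresponding inner products against a bounded model of $\Lambda'_{b,W}$, uniformly over the $O_\eps(1)$ iteration steps. This demands a linear-forms / correlation condition on the Green--Tao pseudorandom majorant $\nu$ keyed precisely to the scales $q[A^{-2D_*}M^{d_0}]$ at which the dual functions of Theorem \ref{dfa} live, together with careful parameter-chasing through Theorem \ref{dfa} to ensure that the complexity parameters $K$ and $K''$ remain bounded independently of $A$ throughout the iteration.
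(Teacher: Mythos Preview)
Your proposal has the right ingredients in view, but there is a genuine gap in how you deploy Theorem~\ref{dfa}, and your treatment of the ``easy'' case $f=\Lambda'_{b,W}$, $f'=1$ also does not go through as written.

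\textbf{The main gap: Theorem~\ref{dfa} only accepts $1$-bounded inputs.} In your energy-increment step you form $F_n = \E_{\vec h}\, \mathcal{D}^D_{P_1(\vec h)[-M,M],\dots,P_D(\vec h)[-M,M]}(g_n)$ with $g_n$ a rescaling of $f - f'_n$. But $f - f'_n$ is only bounded pointwise by $\nu + O(1)$, and no constant rescaling brings this into $[-1,1]$; hence Theorem~\ref{dfa} does not apply to $F_n$. Extending Theorem~\ref{dfa} to $\nu$-bounded inputs is itself a densification problem of roughly the same depth as the theorem you are trying to prove, and you have not indicated how to do it. The paper sidesteps this by a different architecture: Theorem~\ref{dfa} is used only inside Proposition~\ref{vari}, where one must show $\nu-1$ is orthogonal to \emph{products} of dual functions of \emph{bounded} $g_\omega$'s. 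With that orthogonality in hand, the black-box Dense Model Theorem (applied to the family $\mathcal F$ of such bounded-input dual functions) yields $f'$; only \emph{afterwards} does the Conlon--Fox--Zhao densification upgrade the conclusion so that the $g_\omega$'s may be $\nu$-bounded, which is what one needs to set $g_\omega = f-f'$ and conclude \eqref{hmr}. In your scheme you are, in effect, trying to run densification \emph{before} you have any dense model, and that is where the argument stalls.

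\textbf{The ``easy'' case is not independent.} Your plan to ``transfer'' Theorem~\ref{avg-norm} to $\nu$-bounded $f$ by redoing a single Cauchy--Schwarz step overlooks that the proof of Theorem~\ref{avg-norm} passes through the concatenation result Theorem~\ref{besu-box}, which is stated and proved only for $f$ in the unit ball of $L^\infty$. The paper does not attempt such a transfer. Instead it first establishes the general (first) part of Theorem~\ref{l1} to obtain a bounded $f'$ with $\|f-f'\|_{U^{D_*}_{q[A^{-2D_*}M^{d_0}]}}=o(1)$; then, under the extra hypotheses $d_0=d$ and $M\ge (A^{-1}N)^{1/d}$, Proposition~\ref{l0} gives $\|\Lambda'_{b,W}-1\|_{U^{D_*}_{q[A^{-2D_*}M^{d_0}]}}=o(1)$, whence $\|f'-1\|_{U^{D_*}_{q[A^{-2D_*}M^{d_0}]}}=o(1)$. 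Now $f'-1$ \emph{is} bounded, so Theorem~\ref{avg-norm} applies to it directly, and the triangle inequality with \eqref{xas-1} finishes. In other words, the second claim is deduced from the first, not proved independently.
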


The approximating function $f'$ is known as a \emph{dense model} for $f$ in the literature.  The above theorem can be compared with the bound $\| f-f'\|_{U^D(\Z/N\Z)} = o(1)$ established in \cite{gt-primes}, and the results in \cite{gt-linear} which imply that $f'$ can be taken to equal $1$ when $f = \Lambda'_{b,W}$.  Also, the arguments in \cite{tz-pattern} give a version of the first part of this theorem in the case that $M$ is a very small power of $N$ (in particular, much smaller than $R$), and the subsequent arguments in \cite{tz-narrow} extend this to cover the regime where $M$ grows slower than any power of $N$ but faster than any power of $\log N$.  These restrictions on $M$ arose from a certain ``clearing denominators'' step encountered when dealing with products of many dual functions associated to averaged local Gowers norms; they are circumvented in this paper by application of the concatenation machinery to replace these norms with more traditional Gowers norms that do not require the ``clearing denominators'' method in order to handle products of dual functions.

\begin{remark}\label{ms}  In \cite[Theorem 9]{tz-narrow}, in the notation of the current paper, the slightly weaker bound
$$
 \E_{\vec h \in [M]^r} \| f - f' \|_{\Box^D_{P_1(\vec h)[-M,M], \dots, P_D(\vec h)[-M,M]}} \leq \eps + o(1)
$$
was established for any fixed $\eps > 0$ assuming that $M = \log^L N$ for some $L = L(\eps)$ that was sufficiently large depending on $\eps$.  Theorem \ref{l1} allows one to now take $L = 1/\kappa$ independent of $\eps$.  By \cite[Remark 4]{tz-narrow}, this allows one to also take $L$ independent of $\eps$ in \cite[Theorem 5]{tz-narrow}, which gives Theorem \ref{main-sub}.
\end{remark}

Morally speaking, Theorem \ref{l1} ought to follow from the results in \cite{gt-primes}, \cite{gt-linear} mentioned above after applying Theorem \ref{avg-norm}, but we run into the familiar difficulty that the function $\Lambda'_{b,W}-1$ is not bounded.  In the final part of Theorem \ref{l1}, the conditions on $d_0$ and $M$ should be dropped, but this requires control on correlations of $\Lambda$ with nilsequences on short intervals, and such control is not currently available in the literature.

We now begin the proof of Theorem \ref{l1}. As in \cite{gt-primes}, \cite{tz-pattern}, we envelop $\Lambda'_{b,W}$ (and hence $f$) by a pseudorandom majorant $\nu = \nu_b:\Z/N\Z \to \R^+$, defined as follows.  Let $\chi \colon \R \to \R$ be a fixed smooth even function that vanishes outside of $[-1,1]$, positive at $0$, and obeys the normalisation
$$ \int_0^1 |\chi'(t)|^2\ dt = 1.$$
We allow all implied constants to depend on $\chi$.  We then set
\begin{equation}\label{nu-def}
 \nu(x) = \nu_b(x) \coloneqq  \frac{\phi(W)}{W} \log R \left( \sum_{m|Wx+b} \mu(m) \chi\left(\frac{\log m}{\log R}\right) \right)^2 
\end{equation}
for all $x \in [N]$.  Comparing this with \eqref{bawn} and \eqref{rk} we conclude the pointwise bound
\begin{equation}\label{kap}
 0 \leq f(x) \leq \Lambda'_{b,W}(x) \ll_\kappa \nu(x) 
\end{equation}
for all $x \in \Z/N\Z$, since $\Lambda'_{b,W}(x)$ is only non-vanishing when $x \in [R,N]$ and $Wx+b$ is prime, in which case the only non-zero summand in \eqref{nu-def} comes from the $m=1$ term.

It was observed in \cite{gt-primes} that
\begin{equation}\label{noo}
\E_{x \in \Z/N\Z} \nu(x) = 1+o(1).
\end{equation}
In fact we have many further ``pseudorandomness'' properties of $\nu$; roughly speaking, any ``non-degenerate'' multilinear correlation of $\nu$ with itself (with reasonable bounds on coefficients) will be $1+o(1)$, as long as the complexity of the correlation is small compared to $1/\kappa$.  Here is one specific instance of the principle we will need:

\begin{proposition}[Gowers uniformity of $\nu-1$]\label{gu}  Let $K$ be a natural number independent of $N$ (but which can depend on $d,r,D,D_*,\kappa$).  Let $q \leq A^{KD_*}$ be a natural number. Then we have
\begin{equation}\label{gu-1}
\E_{\vec h, \vec h' \in [M]^r} \|\nu-1\|_{\Box^{D_*+2D}_{q[A^{-3KD_*}M^{d_0}], \dots, q[A^{-3KD_*}M^{d_0}], P_1(\vec h)[-M,M],\dots,P_D(\vec h)[-M,M],P_1(\vec h')[-M,M],\dots,P_D(\vec h')[-M,M]}}^{2^{D_*+2D}} = o(1).
\end{equation}
In particular, by monotonicity \eqref{mono}, permutation symmetry, and H\"older's inequality one has the estimates
$$
\|\nu-1\|_{\Box^{D_*}_{q[A^{-3KD_*}M^{d_0}], \dots, q[A^{-3KD_*}M^{d_0}]}} = o(1),$$
$$
\E_{\vec h \in [M]^r} \|\nu-1\|_{\Box^{D_*+D}_{q[A^{-3KD_*}M^{d_0}], \dots, q[A^{-3KD_*}M^{d_0}], P_1(\vec h)[-M,M],\dots,P_D(\vec h)[-M,M]}}^{2^{D_*+D}} = o(1),$$
and
$$
\E_{\vec h, \vec h' \in [M]^r} \|\nu-1\|_{\Box^{2D}_{P_1(\vec h)[-M,M],\dots,P_D(\vec h)[-M,M],P_1(\vec h')[-M,M],\dots,P_D(\vec h')[-M,M]}}^{2^{2D}} = o(1).$$
\end{proposition}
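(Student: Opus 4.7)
The plan is to reduce \eqref{gu-1} to the standard linear forms condition for the Goldston--Yıldırım majorant $\nu$ from \cite{gt-primes}. Expanding the $2^{D_*+2D}$th power of the box norm by its definition, the left-hand side of \eqref{gu-1} equals
$$
\E_{\vec h, \vec h' \in [M]^r}\ \E_{x \in \Z/N\Z}\ \E_{h'_1,\dots,h'_{D_*+2D}}\ \prod_{\omega \in \{0,1\}^{D_*+2D}} (\nu-1)\!\left(x + \sum_i \omega_i h'_i\right),
$$
where $h'_i$ ranges in $Q_i - Q_i$ with $Q_i$ being either the small-scale progression $q[A^{-3KD_*}M^{d_0}]$ (for $i \leq D_*$) or one of the polynomial-scale progressions $P_j(\vec h)[-M,M]$ or $P_j(\vec h')[-M,M]$ (for $i > D_*$). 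Writing each factor as $\nu - 1$ and multiplying out gives a signed sum, indexed by subsets $S \subseteq \{0,1\}^{D_*+2D}$, of averaged correlations of the form $(-1)^{|S^c|}\,\E \prod_{\omega \in S} \nu(\psi_\omega)$.

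Next I would show that each such correlation equals $1 + o(1)$. Writing $h'_i = a_i - b_i$ with $a_i, b_i \in Q_i$ and factoring out the scalings ($q$, or $P_j(\vec h)$, or $P_j(\vec h')$), the functions $\nu(\psi_\omega)$ become evaluations of $\nu$ at a system of $|S| \leq 2^{D_*+2D}$ affine-linear forms in $x$ and the rescaled integer parameters, with integer coefficients of size $O(A^{O(1)} M^{O(d)}) = N^{O(1)}$. For $\omega \neq \omega'$ the difference $\psi_\omega - \psi_{\omega'}$ contains at least one nonzero contribution of the form $\pm q (a_i - b_i)$ or $\pm P_j(\cdot)(a_i - b_i)$, so the forms are pairwise non-proportional whenever the relevant polynomial evaluations are nonzero. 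Under these conditions the linear forms condition of \cite{gt-primes} (as adapted to the $W$-tricked, $\chi$-weighted majorant in \cite{tz-pattern}) yields the desired estimate $1 + o(1)$; the complexity $2^{D_*+2D}$ is tolerated because $D_*$ is fixed while $1/\kappa$ is taken much larger.

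The main obstacle is the exceptional set of $(\vec h, \vec h') \in [M]^{2r}$ on which some $P_j(\vec h)$ or $P_j(\vec h')$ vanishes or two of the forms $\psi_\omega, \psi_{\omega'}$ degenerate. Since each $P_j$ is a fixed nonzero polynomial of degree at most $d-1$ with integer coefficients of magnitude at most $A$, a Schwartz--Zippel bound gives its zero set in $[M]^r$ relative density $O(d/M) = o(1)$. On this small set one absorbs the contribution using the crude diagonal moment estimate $\E_x \nu(x)^{O(1)} = O(1)$, itself another instance of the linear forms condition. Summing the signed decomposition and using the combinatorial identity $\sum_{S \subseteq \{0,1\}^{D_*+2D}} (-1)^{|S^c|} = 0$, the leading constants cancel exactly, leaving the bound $o(1)$ claimed in \eqref{gu-1}. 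The three auxiliary inequalities then follow immediately from \eqref{mono}, the permutation symmetry of the box-norm indices, and H\"older's inequality applied to the averaged Gowers inner product.
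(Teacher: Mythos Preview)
Your overall strategy---expand the box norm, write the result as an alternating sum of $2^{2^{D_*+2D}}$ correlations of $\nu$, show each correlation is $1+o(1)$, and let the signs cancel---is exactly what the paper does. The gap is in how you justify that each correlation is $1+o(1)$.

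You freeze $\vec h,\vec h'$ and try to invoke the linear forms condition of \cite{gt-primes} with coefficients of size $O(A^{O(1)}M^{O(d)})=N^{O(1)}$. That condition does not tolerate coefficients growing with $N$; it is stated for systems whose coefficients are bounded by a constant $L$ independent of $N$, and the error terms in its proof degrade badly otherwise. The paper sidesteps this by \emph{not} freezing $\vec h,\vec h'$: it treats $\vec h,\vec h',$ and all the box-norm parameters simultaneously as averaging variables, so that each shift $\sum_i \omega_i h'_i$ becomes a genuine polynomial of degree $\leq d$ in $s=2r+D_*+2D$ variables with coefficients bounded by $A^{O_{K,D_*}(1)}$. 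This is precisely the input for the polynomial forms condition (Theorem~\ref{poly}, drawn from \cite[Corollary~11.2]{tz-pattern} and \cite[Proposition~3]{tz-narrow}), which then gives $1+o(1)$ for each of the $2^{2^{D_*+2D}}$ terms directly. No exceptional set for degenerate $\vec h$ is needed, because the differences $P_i-P_j$ are non-constant as polynomials in the full variable set.

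Your handling of the exceptional set also contains an error: the moment bound $\E_x \nu(x)^{O(1)}=O(1)$ is false, since $\nu$ is pointwise of order $\log N$ and one has $\E_x \nu(x)^k \asymp (\log N)^{k-1}$. One can rescue this step using the crude pointwise bound $\nu\ll\log N$ together with $M\geq \log^{1/\kappa}N$ and $\kappa$ small relative to $2^{D_*+2D}$, but with the paper's approach the issue simply does not arise.
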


\begin{proof}  See Appendix \ref{polyapp}.
\end{proof}

Next, we give a variant of the estimate in \cite[Proposition 6.2]{gt-primes}.

\begin{proposition}[$\nu-1$ orthogonal to dual functions]\label{vari}  Let the notation and hypotheses be as in Theorem \ref{l1}.  Let $K,K'$ be natural numbers independent of $N$ (but which can depend on $d,r,D,D_*,\kappa$).   
For each $j=1,\dots,K$, let $\vec f_j = (f_{j,\omega})_{\omega \in \{0,1\}^{D_*} \backslash \{0\}^{D_*}}$ be a tuple of functions $f_{j,\omega} \colon \Z/N\Z \to [-1,1]$, and $q_j \leq A^{D_*}$ be a natural number, and write
$$ F_j \coloneqq  {\mathcal D}^{D_*}_{q_{j}[A^{-2D_*}M^{d_0}]}( f_{j} ).$$
For each $j'=1,\dots,K'$, let $\vec g_{j'} = (g_{j',\omega})_{\omega \in \{0,1\}^D \backslash \{0\}^D}$ be a tuple of functions $g_{j',\omega} \colon \Z/N\Z \to [-1,1]$, and write
$$ G_{j'} \coloneqq  \E_{\vec h \in [M]^r} {\mathcal D}^D_{P_1(\vec h)[-M,M],\dots,P_D(\vec h)[-M,M]}(\vec g_{j'}).$$
Then
\begin{equation}\label{xfa}
 \E_{x \in \Z/N\Z} (\nu(x)-1) \left(\prod_{j=1}^{K} F_j(x)\right) \left(\prod_{j'=1}^{K'} G_{j'}(x)\right) = o(1).
\end{equation}
\end{proposition}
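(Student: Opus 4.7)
My plan is to follow the Green--Tao approach from \cite[Proposition 6.2]{gt-primes}: I would unfold all the dual functions as expectations over shifts, apply the Cauchy--Schwarz-Gowers inequality iteratively to absorb the bounded factors $f_{j,\omega}$ and $g_{j',\omega}$, and end up bounding \eqref{xfa} by a mixed Gowers box norm of $\nu - 1$ that is directly controlled by Proposition \ref{gu}.

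Concretely, I would first write
$$
\E_x (\nu(x) - 1) \prod_j F_j(x) \prod_{j'} G_{j'}(x) = \E_{\text{shifts}} \E_x (\nu(x) - 1) \prod_{j,\omega} f_{j,\omega}(x + \omega \cdot \vec h^{(j)}) \prod_{j',\omega} g_{j',\omega}(x + \omega \cdot \vec y^{(j')}),
$$
where, for each $j$, $\vec h^{(j)} = (h_1^{(j)}, \dots, h_{D_*}^{(j)})$ has coordinates in $q_j[A^{-2D_*}M^{d_0}] - q_j[A^{-2D_*}M^{d_0}]$, and, for each $j'$, the shifts $y_i^{(j')}$ lie in $P_i(\vec h_{j'})[-M,M] - P_i(\vec h_{j'})[-M,M]$ for an auxiliary parameter $\vec h_{j'} \in [M]^r$. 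Next I would Cauchy--Schwarz in each of the $D_*$ coordinate directions of $\vec h^{(j)}$ (for every $j$), each time doubling up the variable and using $\|f_{j,\omega}\|_\infty \leq 1$ to absorb a factor; the $KD_*$ shifts coming from the different $F_j$'s should collapse into $D_*$ shifts along a common progression $q[A^{-3KD_*}M^{d_0}]$ with $q = \operatorname{lcm}(q_1,\dots,q_K) \leq A^{KD_*}$. The larger exponent $3KD_*$ in Proposition \ref{gu}, compared with the $2D_*$ appearing in the definition of the $F_j$'s, is precisely what lets this common smaller progression embed into each $q_j[A^{-2D_*}M^{d_0}]$. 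I would then apply analogous Cauchy--Schwarz steps in each of the $\vec y^{(j')}$ directions to absorb the $g_{j',\omega}$'s, and in doing so pair the separate parameters $\vec h_{j'}$ into a single doubled configuration $(\vec h, \vec h') \in [M]^r \times [M]^r$. What remains is a fractional power of the mixed box norm appearing in \eqref{gu-1}, and the conclusion follows from Proposition \ref{gu}.

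The main obstacle will be the Cauchy--Schwarz bookkeeping: I need to schedule the reductions so that each bounded factor is absorbed exactly once, the $K$ different progressions $q_j[A^{-2D_*}M^{d_0}]$ get simultaneously replaced by a single progression $q[A^{-3KD_*}M^{d_0}]$ while remaining within the cap $q \leq A^{KD_*}$ allowed by Proposition \ref{gu}, and the $K'$ independent averaging parameters $\vec h_{j'}$ of the $G_{j'}$'s consolidate into the single pair $(\vec h, \vec h')$ appearing in the target norm. Both the safety margin $A^{-3KD_*}$ versus $A^{-2D_*}$ in the progression length, and the permissive bound $q \leq A^{KD_*}$ in Proposition \ref{gu}, are engineered precisely to make these reductions feasible. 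Once the bookkeeping is arranged correctly, the remainder of the argument is routine Cauchy--Schwarz-Gowers manipulation along the lines of \cite[Appendix B]{gt-linear}.
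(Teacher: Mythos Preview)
Your treatment of the $F_j$ factors via ``clearing denominators'' is correct and matches the paper's $K'=0$ argument: because the step sizes $q_j \leq A^{D_*}$ are uniformly bounded, their product $q \leq A^{KD_*}$ is still small, and the common progression $q[A^{-3KD_*}M^{d_0}]$ sits inside each $q_j[A^{-2D_*}M^{d_0}]$ up to negligible total-variation error. This is exactly how the $KD_*$ directions collapse to $D_*$.

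The gap is in your handling of the $G_{j'}$ factors. There is no analogous clearing-denominators step for them: the relevant step sizes are $P_i(\vec h_{j'})$, which are generic integers of size up to $A M^{d-1}$, and for distinct $j'$ they have no reason to share a small common multiple. Consequently the $K'$ independent averaging parameters $\vec h_1,\dots,\vec h_{K'}$ cannot be ``paired into a single doubled configuration $(\vec h,\vec h')$'' as you claim; after Cauchy--Schwarz you would be left with a box norm of $\nu-1$ in $D_* + K'D$ directions, averaged over all $K'$ parameters $\vec h_{j'}$. Proposition \ref{gu} only supplies $D_*+2D$ directions and two parameters $(\vec h,\vec h')$, and it cannot be upgraded: its proof rests on the polynomial forms condition (Theorem \ref{poly}), which requires $\kappa$ to be chosen small depending on the number of forms, whereas $K'$ is permitted to depend on $\kappa$ and hence can be arbitrarily large relative to $1/\kappa$. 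This is precisely the obstruction flagged in the paragraph preceding the paper's proof.

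The paper circumvents this by a genuinely different mechanism. It first establishes the cases $K'=0,1,2$ directly (your approach does work there, and indeed the $K'=2$ case is what forces the $2D$ directions in Proposition \ref{gu}). For general $K'$ it invokes Theorem \ref{dfa}, the dual-function approximation theorem built on the concatenation machinery: each $G_{j'}$ is approximated in $L^2$ by a polynomial combination of $F$-type dual functions, so the product $\prod_{j'} G_{j'}$ splits into a main term handled by the $K'=0$ case (with a larger $K$) plus error terms that are controlled using the $K'\leq 2$ cases together with $\E_x \nu(x)=1+o(1)$ and Cauchy--Schwarz. You will need this two-stage structure; the single-pass Cauchy--Schwarz--Gowers scheme cannot close on its own.
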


A key technical point here (as in \cite{gt-primes}) is that the parameters $K,K'$ are allowed to be large compared to $1/\kappa$ (though $K, K'$ will still be small compared to $W,A,R,M,N$).  The potentially large nature of $K,K'$ requires one to proceed carefully, as the pseudorandomness properties of $\nu$ do not allow us to directly control averages involving a number of forms that are comparable or larger to $1/\kappa$.  On the other hand, Proposition \ref{vari} is easier to prove in one respect than \cite[Proposition 6.2]{gt-primes}, because the functions $f_{j,\omega}, g_{j',\omega}$ are assumed to be bounded in magnitude by $1$ rather than $\nu+1$.  We are able to use this stronger hypothesis due to our use of the Conlon-Fox-Zhao densification machinery (which was not available at the time that \cite{gt-primes} was written) from \cite{cfz} later in this paper.

\begin{proof}  We will first prove \eqref{xfa} for small values of $K'$, specifically $K'=0,1,2$, and then use Theorem \ref{dfa} to conclude the case of general $K'$. 

We begin with the $K'=0$ case.  In this case, we may adapt the arguments used to prove \cite[Proposition 6.2]{gt-primes}.  First, we write the left-hand side of \eqref{xfa} as
$$
\E_{x \in \Z/N\Z} (\nu(x)-1) \prod_{j=1}^K \prod_{\omega \in \{0,1\}^{D_*}\backslash \{0\}^{D_*}}
\E_{\vec h_j \in Q_j^{D_*}} f_{j,\omega}( x + \omega \cdot \vec h_j ) $$
where $Q_j$ is the multiset
$$ Q_j \coloneqq  q_j [A^{-2D_*}M^{d_0}] - q_j [A^{-2D_*}M^{d_0}]$$
and $\cdot$ denotes the usual dot product:
$$ (\omega_1,\dots,\omega_{D_*}) \cdot (h_{j,1},\dots,h_{j,D_*}) \coloneqq  \sum_{i=1}^{D_*} \omega_i h_{j,i}.$$
We now ``clear denominators'' by writing $q \coloneqq  \prod_{j=1}^K q_j$, and introducing the multiset
$$ Q \coloneqq  q [A^{-3KD_*}M^{d_0}] - q [A^{-3KD_*}M^{d_0}].$$
Note that $q \leq A^{KD_*}$, which implies that
$$ d_{TV}(Q_j, Q_j+h) = o(1)$$
for all $h \in Q$, thus
$$
\E_{\vec h_j \in Q_j^{D_*}} f_{j,\omega}( x + \omega \cdot \vec h_j )
= \E_{\vec H_j \in Q_j^{D_*}} f_{j,\omega}( x + \omega \cdot \vec H_j + \omega \cdot \vec h) + o(1)$$
for all $j \in Q$ and $\vec h \in Q^{D_*}$.
From \eqref{noo} we have $\E_{x \in \Z/N\Z} \nu(x)+1 = 2+o(1)$, so we can write the left-hand side of \eqref{xfa} as
$$
\E_{x \in \Z/N\Z} (\nu(x)-1) \prod_{j=1}^K \prod_{\omega \in \{0,1\}^{D_*}\backslash \{0\}^{D_*}}
\E_{\vec H_j \in Q_j^{D_*}} f_{j,\omega}( x + \omega \cdot \vec H_j + \omega \cdot \vec h) + o(1);$$
averaging over $\vec h$, we can also write this left-hand side as
$$
\E_{x \in \Z/N\Z} \E_{\vec h \in Q^{D_*}} (\nu(x)-1) \prod_{j=1}^K \prod_{\omega \in \{0,1\}^{D_*}\backslash \{0\}^{D_*}}
\E_{\vec H_j \in Q_j^{D_*}} f_{j,\omega}( x + \omega \cdot \vec H_j + \omega \cdot \vec h) + o(1).$$
We can rewrite this as
$$
\E_{\vec H \in \prod_{j=1}^K Q_j^{D_*}}  \langle (f_{\omega; \vec H})_{\omega \in \{0,1\}^{D_*}} \rangle_{U^{D_*}_{q[A^{-3KD_*}M^{d_0}]}} + o(1)$$
where
\begin{equation}\label{f0h}
 f_{\{0\}^{D_*}; \vec H}(x) \coloneqq  \nu(x)-1
\end{equation}
and
\begin{equation}\label{foh}
 f_{\omega;\vec H}(x) \coloneqq  \prod_{j=1}^K f_{j,\omega}( x + \omega \cdot \vec H_j)
\end{equation}
for $\omega \in \{0,1\}^{D_*} \backslash \{0\}^{D_*}$ and $\vec H = (\vec H_1,\dots,\vec H_k)$.  By the Cauchy-Schwarz-Gowers inequality \eqref{csg}, we can bound this by
$$
\E_{\vec H \in \prod_{j=1}^K Q_j^{D_*}}  \prod_{\omega \in \{0,1\}^{D_*}} \| f_{\omega; \vec H} \|_{U^{D_*}_{q[A^{-3KD_*}M^{d_0}]}} + o(1).$$
From Proposition \ref{gu} we have
$$ \| f_{\{0\}^{D_*}; \vec H} \|_{U^{D_*}_{q[A^{-3KD_*}M^{d_0}]}} = \| \nu - 1 \|_{U^{D_*}_{q[A^{-3KD_*}M^{d_0}]}} = o(1)$$
while for $\omega \in \{0,1\}^{D_*} \backslash \{0\}^{D_*}$ we have $|f_{\omega;\vec H}| \leq 1$ and hence
$$ \| f_{\omega; \vec H} \|_{U^{D_*}_{q[A^{-3KD_*}M^{d_0}]}}  \leq 1.$$
The claim follows.

Now we turn to the $K'=1$ case.  This is effectively the same as the $K'=0$ case, except that $\nu-1$ is replaced by $(\nu-1) G_1$.  Thus, by repeating the above arguments, we reduce to showing that
$$ \| (\nu - 1) G_1 \|_{U^{D_*}_{q[A^{-3KD_*}M^{d_0}]}} = o(1).$$
Expanding out $G_1$ and using the triangle inequality for Gowers norms, it suffices to show that
$$
\E_{\vec h \in [M]^r} \| (\nu-1) {\mathcal D}^D_{P_1(\vec h)[-M,M],\dots,P_D(\vec h)[-M,M]}(\vec g_{1}) \|_{U^{D_*}_{q[A^{-3KD_*}M^{d_0}]}} = o(1)$$
so by H\"older's inequality it suffices to show that
$$
\E_{\vec h \in [M]^r} \| (\nu-1) {\mathcal D}^D_{P_1(\vec h)[-M,M],\dots,P_D(\vec h)[-M,M]}(\vec g_{1}) \|_{U^{D_*}_{q[A^{-3KD_*}M^{d_0}]}}^{2^{D_*}} = o(1).$$
We can expand the left-hand side as
$$
\E_{\vec h \in [M]^r} \langle g_{\omega,\vec h} \rangle_{\Box^{D_*+D}_{q[A^{-3KD_*}M^{d_0}], \dots, q[A^{-3KD_*}M^{d_0}], P_1(\vec h)[-M,M],\dots,P_D(\vec h)[-M,M]}}$$
where $q[A^{-3KD_*}M^{d_0}]$ appears  $D_*$ times in the Gowers norm, and
$$ g_{\omega, \{0\}^D} \coloneqq  \nu-1$$
and
$$ g_{\omega, \omega'} \coloneqq  g_{1,\omega'}$$
for all $\omega \in \{0,1\}^{D_*}$ and $\omega' \in \{0,1\}^D \backslash \{0\}^D$.  By the Gowers-Cauchy-Schwarz inequality, we can bound the above expression by
$$
\E_{\vec h \in [M]^r} \|\nu-1\|_{\Box^{D_*+D}_{q[A^{-3KD_*}M^{d_0}], \dots, q[A^{-3KD_*}M^{d_0}], P_1(\vec h)[-M,M],\dots,P_D(\vec h)[-M,M]}}^{2^{D_*}}$$
so by H\"older's inequality it suffices to show that
$$
\E_{\vec h \in [M]^r} \|\nu-1\|_{\Box^{D_*+D}_{q[A^{-3KD_*}M^{d_0}], \dots, q[A^{-3KD_*}M^{d_0}], P_1(\vec h)[-M,M],\dots,P_D(\vec h)[-M,M]}}^{2^{D_*+D}}=o(1).$$
But this follows from Proposition \ref{gu}.

Next, we turn to the $K'=2$ case.  Repeating the previous arguments, we reduce to showing that
$$ \| (\nu - 1) G_1 G_2 \|_{U^{D_*}_{q[A^{-3KD_*}M^{d_0}]}} = o(1).$$
But the product $G_1 G_2$ can be written as a dual function
$$ G_1 G_2 = {\mathcal D}^{2D}_{P_1(\vec h)[-M,M],\dots,P_D(\vec h)[-M,M],P_1(\vec h)[-M,M],\dots,P_D(\vec h)[-M,M]}(\vec g_{12})$$
where $\vec g_{12} = (g_{12,\omega})_{\omega \in \{0,1\}^{2D} \backslash \{0\}^{2D}}$ is defined by setting
$$ g_{12,(\omega, \{0\}^D)} \coloneqq  g_{1,\omega}$$
and
$$ g_{12,(\{0\}^D, \omega')} \coloneqq  g_{2,\omega'}$$
and
$$ g_{12,(\omega,\omega')} \coloneqq  1$$
for $\omega,\omega' \in \{0,1\}^D \backslash \{0\}^D$.  One can then repeat the $K'=1$ arguments (replacing $D$ with $2D$, and duplicating the polynomials $P_1,\dots,P_D$) to conclude this case.

Now we turn to the general case when $K'$ is allowed to be large.  Let $\eps>0$ be a parameter (which we will initially take to be independent of $N$) to be chosen later.  It will suffice to show that the left-hand side of \eqref{xfa} is $O_{K'}(\eps) + o(1)$ for each fixed $\eps>0$, since the claim then follows by using a diagonalisation argument to send $\eps$ slowly to zero.

Applying Theorem \ref{dfa} to each $G_{j'}$, we see that for any $j'=1,\dots,K'$, one can find an approximation $\tilde G_{j'} \colon \Z/N\Z \to [-2,2]$ to $G_{j'}$ with
$$ \| G_{j'} - \tilde G_{j'} \|_{L^2(\Z/N\Z)} \leq \eps$$
such that $\tilde G_{j'}$ has a representation of the form \eqref{fkcf} for some $k,k_i = O_{d,r,D,D_*,\eps}(1)$ (depending on $j$).  We then write
$$ G_{j'} = \tilde G_{j'} + E_{j'}$$
and
$$ G_1 \dots G_{K'} = \tilde G_1 \dots \tilde G_{K'} + E$$
for some error functions $E_{j'}, E$ with
\begin{equation}\label{dan}
 \|E_{j'} \|_{L^2(\Z/N\Z)} \leq \eps
\end{equation}
and
\begin{equation}\label{lak}
 E \ll_{K'} |E_1| + \dots + |E_{K'}|.
\end{equation}
We may split the left-hand side of \eqref{xfa} as the sum of
\begin{equation}\label{lok}
 \E_{x \in \Z/N\Z} (\nu(x)-1) F_1 \dots F_K(x) \tilde G_1 \dots \tilde G_{K'}(x)
\end{equation}
and
\begin{equation}\label{nad}
 \E_{x \in \Z/N\Z} (\nu(x)-1) F_1 \dots F_K(x) E(x).
\end{equation}
By \eqref{fkcf}, the expression \eqref{lok} is a bounded linear combination of $O_{d,r,D,D_*,\eps,K}(1)$ terms, each of which is $o(1)$ by the $K'=0$ case of this proposition (with $K$ replaced by various quantities of size $O_{d,r,D,D_*,\eps,K}(1)$).  Thus it suffices to show that the expression \eqref{nad} is $O_{K'}(\eps)+o(1)$.  Since the $F_j$ are bounded in magnitude by $1$, we can use \eqref{lak} bound \eqref{nad} in magnitude by
$$
\ll_{K'} \sum_{j'=1}^{K'} \E_{x \in \Z/N\Z} (\nu(x)+1) |E_{j'}(x)|.$$
By \eqref{noo} we have $ \E_{x \in \Z/N\Z} (\nu(x)+1) = 2+o(1)$, so by Cauchy-Schwarz it suffices to show that
$$
 \E_{x \in \Z/N\Z} (\nu(x)+1) E_{j'}(x)^2 \ll_{K'} \eps^2 + o(1)$$
for each $j'$.  From \eqref{dan} it suffices to show that
$$
 \E_{x \in \Z/N\Z} (\nu(x)-1) E_{j'}(x)^2 = o(1).$$
From the definition of $E$, we can expand the left-hand side as a bounded linear combination of the expressions
$$
 \E_{x \in \Z/N\Z} (\nu(x)-1) \tilde G_{j'}(x) \tilde G_{j'}(x),
$$
$$
 \E_{x \in \Z/N\Z} (\nu(x)-1) \tilde G_{j'}(x) G_{j'}(x),
$$
and
$$
 \E_{x \in \Z/N\Z} (\nu(x)-1) G_{j'}(x) G_{j'}(x).
$$
But these are all equal to $o(1)$ thanks to the $K'=0,1,2$ cases of this proposition respectively.  The claim follows.
\end{proof}

To use this proposition, we recall the \emph{dense model theorem}:

\begin{theorem}[Dense model theorem]  Let $\nu \colon \Z/N\Z \to \R^+$ be a function with $\E_{x \in \Z/N\Z} \nu(x) = 1+o(1)$, and let ${\mathcal F}$ be a collection of functions $g \colon \Z/N\Z \to \R^+$ bounded in magnitude by $1+o(1)$.  Suppose that for any fixed $k$ (independent of $N$) and any $g_1,\dots,g_k \in {\mathcal F}$, one has
$$ \E_{x \in \Z/N\Z} g_1(x) \dots g_k(x) (\nu(x)-1) = o(1)$$
as $N \to \infty$ uniformly in the choice of $f_1,\dots,f_k$.
Then for any function $f \colon \Z/N\Z \to \R$ with $0 \leq g(x) \leq \nu(x)$ for all $x$, there exists a function $f' \colon \Z/N\Z \to \R$ with $0 \leq f'(x) \leq 1+o(1)$ for all $x$ such that
$$ \E_{x \in \Z/N\Z} (f(x)-f'(x))g(x) = o(1)$$
as $N \to\infty$ uniformly for all $g \in {\mathcal F}$.
\end{theorem}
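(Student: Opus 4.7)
My plan is to prove the dense model theorem by a Hahn-Banach separation combined with a Weierstrass polynomial approximation, following the general strategy pioneered by Gowers and made abstract by Reingold-Trevisan-Tulsiani-Vadhan and Tao-Zhao.

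Let $\sigma = o(1)$ be a small tolerance and consider the closed convex set $K \coloneqq \{f' \colon \Z/N\Z \to \R : 0 \leq f'(x) \leq 1+\sigma \text{ for all } x\}$. The conclusion to prove reduces to showing that $f$ lies at $o(1)$-distance from $K$ in the seminorm $\|h\|_{*} \coloneqq \sup_{g \in \mathcal{F}} |\langle h, g\rangle|$. If this failed for some fixed $\delta > 0$, the Hahn-Banach separation theorem (applied in this finite-dimensional space) would produce a signed combination $\phi = \sum_{i=1}^m c_i g_i$ with $g_i \in \mathcal{F}$, $\sum_i |c_i| \leq 1$, such that
\begin{equation*}
\langle f, \phi\rangle \geq \sup_{f' \in K}\langle f',\phi\rangle + \delta.
\end{equation*}
The elements of $\mathcal{F}$ being bounded by $1+o(1)$, $\phi$ takes values in $[-1-o(1),\,1+o(1)]$; the supremum on the right then equals $(1+\sigma)\langle 1,\phi_+\rangle$, where $\phi_+ \coloneqq \max(\phi, 0)$.

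The key step is to approximate $\phi_+$ by a polynomial in the test functions. By the Weierstrass approximation theorem, there is a polynomial $P$ of degree and coefficients depending only on $\delta$ such that $|P(t) - \max(t,0)| \leq \delta/10$ for all $t \in [-2,2]$, so that $\|P(\phi) - \phi_+\|_{L^\infty} \leq \delta/10 + o(1)$. Expanding $P\bigl(\sum_i c_i g_i\bigr)$ gives a bounded linear combination of $O_\delta(1)$ products of the form $g_{i_1}\cdots g_{i_k}$ with each factor in $\mathcal{F}$ and $k \leq \deg P$. Applying the hypothesis to each such product yields $\langle \nu-1, P(\phi)\rangle = o(1)$, and since $\E_{x \in \Z/N\Z}(\nu(x)+1) = 2+o(1)$, we conclude $|\langle \nu-1, \phi_+\rangle| \leq \delta/4 + o(1)$.

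Combining $0 \leq f \leq \nu$ with the decomposition $\phi = \phi_+ - \phi_-$ and discarding the nonnegative term $\langle f, \phi_-\rangle$, we obtain
\begin{equation*}
\langle f, \phi\rangle \;\leq\; \langle \nu, \phi_+\rangle \;=\; \langle 1,\phi_+\rangle + \langle \nu-1,\phi_+\rangle \;\leq\; (1+\sigma)\langle 1,\phi_+\rangle + \tfrac{\delta}{4} + o(1) \;=\; \sup_{f' \in K}\langle f',\phi\rangle + \tfrac{\delta}{2}
\end{equation*}
for $N$ large, contradicting the separation gap of $\delta$. The main obstacle is uniformity: the Weierstrass polynomial $P$ must have degree and coefficients depending only on $\delta$ (and not on $N$), since the qualitative hypothesis on $\nu-1$ yields an $o(1)$ error only for each fixed product length $k$. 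The a priori uniform $L^\infty$-bound on $\phi$ inherited from the boundedness of $\mathcal{F}$ is precisely what makes the approximation step legitimate and lets the hypothesis be invoked uniformly in $N$.
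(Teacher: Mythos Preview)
Your argument is correct and is precisely the Hahn--Banach separation plus Weierstrass approximation proof that appears in the references \cite{rttv}, \cite{gow} which the paper cites in lieu of a self-contained proof. The key point you correctly isolate---that the approximating polynomial $P$ has degree and coefficients depending only on $\delta$, so that the hypothesis on products $g_{i_1}\cdots g_{i_k}$ with $k\leq \deg P$ may be invoked uniformly in $N$---is exactly the crux of those proofs.
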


\begin{proof} See \cite[Theorem 1.1]{rttv}, \cite[Theorem 7.1]{tz-pattern}, or \cite[Theorem 4.8]{gow}; the formulation here is closest to that in \cite{rttv}.  (This theorem also appeared implicitly in \cite{gt-primes}.)
\end{proof}

\begin{proposition}[A dense model for $f$]\label{dense-model}
Let the notation and hypotheses be as in Theorem \ref{l1}.  Then there exists a function $f' \colon \Z/N\Z \to \R$ with the pointwise bounds $0 \leq f' \ll_\kappa 1$ such that
$$ \E_{x \in \Z/N\Z} (f-f')(x) {\mathcal D}^{D_*}_{q[A^{-2D_*}M^{d_0}]}( \vec g )(x) = o(1)$$
and
$$\E_{x \in \Z/N\Z} (f-f')(x)  \E_{\vec h \in [M]^r} {\mathcal D}^D_{P_1(\vec h)[-M,M],\dots,P_D(\vec h)[-M,M]}(\vec g')(x) = o(1)$$
for any $q \leq A^{D_*}$ and any tuples $\vec g = (g_\omega)_{\omega \in \{0,1\}^{D_*} \backslash \{0\}^{D_*}}$ and $\vec g' = (g'_\omega)_{\omega \in \{0,1\}^D \backslash \{0\}^D}$ of functions $g_\omega, g'_\omega \colon \Z/N\Z \to [-1,1]$.
\end{proposition}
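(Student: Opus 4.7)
The plan is to derive this proposition as a direct application of the Dense Model Theorem stated immediately above, using Proposition \ref{vari} to supply the required pseudorandomness hypothesis. Since \eqref{kap} gives $0 \le f \le C_\kappa \nu$ for some constant $C_\kappa = O_\kappa(1)$, I would first rescale by setting $\tilde f \coloneqq f/C_\kappa$ so that $0 \le \tilde f \le \nu$; producing a dense model $\tilde f' \colon \Z/N\Z \to \R$ with $0 \le \tilde f' \le 1+o(1)$ for $\tilde f$ then yields $f' \coloneqq C_\kappa \tilde f'$ with $0 \le f' \ll_\kappa 1$ as required.

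The test family $\mathcal{F}$ for the Dense Model Theorem should consist of the nonnegative, $[0,1]$-valued functions $g_F \coloneqq \frac{1+F}{2}$ where $F$ ranges over the two classes of dual functions appearing in the conclusion of the proposition: (i) $F = {\mathcal D}^{D_*}_{q[A^{-2D_*}M^{d_0}]}(\vec g)$ with $q \le A^{D_*}$ and $\vec g$ a tuple of $[-1,1]$-valued functions, and (ii) $F = \E_{\vec h \in [M]^r} {\mathcal D}^D_{P_1(\vec h)[-M,M],\dots,P_D(\vec h)[-M,M]}(\vec g')$ with $\vec g'$ similarly bounded. Each such $F$ is bounded in magnitude by $1$ by the trivial bound on dual functions of bounded data, so $g_F$ takes values in $[0,1]$. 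I would also adjoin the constant function $1$ to $\mathcal{F}$, whose correlation with $\nu - 1$ is $o(1)$ by \eqref{noo}.

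To verify the correlation hypothesis of the Dense Model Theorem for any fixed $k$, I would expand
\[ g_1 \cdots g_k = \prod_{i=1}^k \frac{1 + F_i}{2} = 2^{-k} \sum_{S \subseteq \{1,\dots,k\}} \prod_{i \in S} F_i, \]
and observe that each summand is a product of at most $k$ dual functions of type (i) and at most $k$ of type (ii); Proposition \ref{vari} gives $\E_{x \in \Z/N\Z}(\nu(x) - 1) \prod_{i \in S} F_i(x) = o(1)$ uniformly in the choice of inputs for each fixed $S$, and summing the $2^k = O_k(1)$ contributions yields $\E(\nu-1) g_1 \cdots g_k = o(1)$ uniformly in $g_1,\dots,g_k \in \mathcal{F}$. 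The Dense Model Theorem then produces $\tilde f'$, and writing each $F$ as $2 g_F - 1$ gives
\[ \E_{x \in \Z/N\Z} (f - f')(x) F(x) = C_\kappa \bigl(2\, \E(\tilde f - \tilde f') g_F - \E(\tilde f - \tilde f')\bigr) = o(1), \]
both terms being $o(1)$ by the conclusion of the Dense Model Theorem applied with $g = g_F$ and with $g = 1 \in \mathcal{F}$ respectively. The only substantive ingredient is the verification of the correlation hypothesis, and this is essentially handed to us by Proposition \ref{vari}; the main technical point I would flag is that Proposition \ref{vari} must be read uniformly over the choices of tuples $\vec f_j, \vec g_{j'}$ and arithmetic parameters $q_j$ (with the $o(1)$ depending only on $K, K'$ and the fixed parameters $d, r, D, D_*, \kappa$), but this uniformity is already built into its statement, so the deduction here is essentially formal.
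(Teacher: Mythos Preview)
Your proof is correct and follows essentially the same route as the paper: rescale $f$ by a constant depending on $\kappa$ so that it is dominated by $\nu$, apply the Dense Model Theorem with the test family built from the two types of dual functions, and invoke Proposition~\ref{vari} to verify the correlation hypothesis. The paper simply places the dual functions $F$ themselves into $\mathcal{F}$ (glossing over the $\R^+$ requirement in the stated Dense Model Theorem), whereas you more carefully use the affine map $F \mapsto (1+F)/2$ to force the test functions into $[0,1]$ and then unwind at the end; this is a harmless technical refinement rather than a different argument.
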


\begin{proof}
If we let $c>0$ be a sufficiently small quantity depending on $\kappa$, then by \eqref{kap} we can bound $c f$ pointwise in magnitude by $\nu$.  Applying the dense model theorem with ${\mathcal F}$ consisting of those functions of the form
$$ 
{\mathcal D}^{D_*}_{q[A^{-2D_*}M^{d_0}]}( \vec g ) $$
or
$$ \E_{\vec h \in [M]^r} {\mathcal D}^D_{P_1(\vec h)[-M,M],\dots,P_D(\vec h)[-M,M]}(\vec g')$$
for any $q \leq A^{D_*}$ and any tuples $\vec g = (g_\omega)_{\omega \in \{0,1\}^{D_*} \backslash \{0\}^{D_*}}$ and $\vec g' = (g'_\omega)_{\omega \in \{0,1\}^D \backslash \{0\}^D}$ of functions $g_\omega, g'_\omega \colon \Z/N\Z \to [-1,1]$, and using Proposition \ref{vari}, we obtain the claim with $f$ replaced by $cf$.  Dividing by $c$, the proposition follows.
\end{proof}

The above properties of $f-f'$ are not directly useful in applications because of the requirement that the functions $g_\omega, g'_\omega$ take values in $[-1,1]$.  However, we can use \emph{densification} argument of Conlon, Fox, and Zhao \cite{cfz} to relax this hypothesis, to a hypothesis that each $g_\omega$ or $g'_\omega$ is bounded by either $1$ or $\nu$:

\begin{proposition}[Densification]  Let the notation and hypotheses be as in Theorem \ref{l1}.  Let $f'$ be the function in Proposition \ref{dense-model}.  Then one has
\begin{equation}\label{sax}
 \E_{x \in \Z/N\Z} (f-f')(x) {\mathcal D}^{D_*}_{q[A^{-2D_*}M^{d_0}]}( \vec g )(x) = o(1)
\end{equation}
and
\begin{equation}\label{xas}
\E_{x \in \Z/N\Z} (f-f')(x)  \E_{\vec h \in [M]^r} {\mathcal D}^D_{P_1(\vec h)[-M,M],\dots,P_D(\vec h)[-M,M]}(\vec g')(x) = o(1)
\end{equation}
for any $q \leq A^{D_*}$ and any tuples $\vec g = (g_\omega)_{\omega \in \{0,1\}^{D_*} \backslash \{0\}^{D_*}}$ and $\vec g' = (g'_\omega)_{\omega \in \{0,1\}^D \backslash \{0\}^D}$ of functions $g_\omega, g'_\omega \colon \Z/N\Z \to \R$, each of which are bounded in magnitude either by $1$ or by $\nu$.  (Thus, for instance, for $\omega \in \{0,1\}^{D_*}$, one either has $|g_\omega(x)| \leq 1$ for all $x \in \Z/N\Z$, or $|g_\omega(x)| \leq \nu(x)$ for all $x \in \Z/N\Z$.)
\end{proposition}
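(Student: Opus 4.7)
The strategy is the Conlon--Fox--Zhao densification method from \cite{cfz}, implemented by induction on the total number $s$ of components among $(g_\omega)$ (respectively $(g'_\omega)$) that are bounded in magnitude by $\nu$ rather than by $1$. The base case $s=0$ is precisely Proposition \ref{dense-model}.

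For the inductive step, fix some $\omega_0$ with $|g_{\omega_0}| \leq \nu$ and expand the dual function appearing in \eqref{sax} so that, after the change of variable $y = x + \omega_0 \cdot \vec h$, the left-hand side takes the form $\E_y g_{\omega_0}(y) H(y)$; here $H$ collects the factor $f - f'$, the residual averaging over the remaining Gowers-norm shift variables, and the remaining $g_\omega$'s (only $s-1$ of which are still $\nu$-bounded). Let ${\mathcal F}$ denote the class of all such $H$. We then apply the dense model theorem to approximate $g_{\omega_0}$ (pointwise bounded by $\nu$) by a function $\tilde g_{\omega_0} \colon \Z/N\Z \to \R$ bounded in magnitude by $1+o(1)$, truncated harmlessly to $[-1,1]$, such that $\E_y (g_{\omega_0} - \tilde g_{\omega_0})(y) H(y) = o(1)$ for every $H \in {\mathcal F}$. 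The hypothesis required for this application---that $\E_y (\nu(y) - 1) H_1(y) \cdots H_k(y) = o(1)$ for any fixed $k$ and any $H_1,\dots,H_k \in {\mathcal F}$---is established by unfolding each $H_i$ into its defining product, clearing denominators in the shift parameters, and applying the Cauchy--Schwarz--Gowers inequality \eqref{csg} together with the bounds on $\nu - 1$ from Proposition \ref{gu}, in the spirit of the proof of Proposition \ref{vari}.

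Writing $g_{\omega_0} = \tilde g_{\omega_0} + (g_{\omega_0} - \tilde g_{\omega_0})$, the correlation $\E_y g_{\omega_0}(y) H(y)$ splits into two parts: the part involving $g_{\omega_0} - \tilde g_{\omega_0}$ is $o(1)$ by construction, while the part involving $\tilde g_{\omega_0}$ corresponds to the same correlation with the number of $\nu$-bounded components reduced by one, and is $o(1)$ by the inductive hypothesis. Iterating $s$ times reduces \eqref{sax} to the base case. The estimate \eqref{xas} is handled identically, with the averaged dual function $\E_{\vec h \in [M]^r} {\mathcal D}^D_{P_1(\vec h)[-M,M],\dots,P_D(\vec h)[-M,M]}$ replacing ${\mathcal D}^{D_*}_{q[A^{-2D_*}M^{d_0}]}$ throughout and the corresponding bound of Proposition \ref{gu} controlling the relevant pseudorandom correlations; one can also interleave the two reductions, densifying $\nu$-bounded $g_\omega$'s and $g'_\omega$'s in any order, since each step only decreases $s$.

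The main obstacle is the pseudorandomness verification for the class ${\mathcal F}$: each $H \in {\mathcal F}$ is itself a multi-step average involving shifted copies of $f - f'$ and the remaining $g_\omega$'s (themselves possibly bounded only by $\nu$), so a product $H_1 \cdots H_k$ unfolds into a rather intricate expression in which the ``short'' shifts $q[A^{-2D_*}M^{d_0}]$ and the ``long'' polynomial shifts $P_i(\vec h)[-M,M]$ are interleaved. The bookkeeping required to reorganise this into a form to which the single global bound \eqref{gu-1} of Proposition \ref{gu} applies---and in particular to absorb the factors of $f$ using the pointwise majorisation $f \leq \Lambda'_{b,W} \ll_\kappa \nu$ without losing in the final $o(1)$ bound---is the most delicate part of the argument, but proceeds by exactly the same pattern as Proposition \ref{vari} (namely clearing denominators, applying Cauchy--Schwarz--Gowers, and reducing to Proposition \ref{gu}) with a larger number of factors to keep track of.
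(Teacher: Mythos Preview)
Your proposal departs from the paper's argument in a crucial way, and the departure creates a genuine gap.

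The paper does \emph{not} re-apply the dense model theorem at the inductive step. Instead, after writing the correlation as $\E_x g'_{\omega_0}(x) F(x)$ with $|g'_{\omega_0}| \leq \nu$, it applies Cauchy--Schwarz with weight $\nu$ (using $\E \nu = 1+o(1)$) to reduce to showing $\E_x \nu(x) F(x)^2 = o(1)$. This splits as $\E_x (\nu-1) F^2$ (handled by Gowers--Cauchy--Schwarz and Proposition~\ref{gu}) plus $\E_x F^2$. The latter is controlled via H\"older between $\E_x F^4$ (bounded by Lemma~\ref{ps2}) and $\E_x |F|$; and $\E_x |F| = o(1)$ is \emph{precisely} the inductive hypothesis, since it amounts to $\E_x g(x) F(x) = o(1)$ for an arbitrary bounded $g$ in place of $g'_{\omega_0}$. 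That is the actual Conlon--Fox--Zhao trick: one Cauchy--Schwarz, one $L^4$--$L^1$ interpolation, and the induction closes.

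Your route instead invokes the dense model theorem to replace $g_{\omega_0}$ by a bounded $\tilde g_{\omega_0}$, with test class ${\mathcal F} = \{H\}$. There are two obstructions. First, the dense model theorem as stated requires the functions in ${\mathcal F}$ to be bounded in magnitude by $1+o(1)$; but each $H$ contains the factor $f - f'$, which is only bounded by $O_\kappa(\nu+1)$, and in later inductive steps also other $\nu$-bounded $g_\omega$'s. So $H$ is not pointwise bounded and the hypothesis of the theorem is not met.

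Second, and more fundamentally, the pseudorandomness verification $\E_y (\nu(y)-1) H_1(y)\cdots H_k(y) = o(1)$ does \emph{not} go through ``by exactly the same pattern as Proposition~\ref{vari}''. That proposition succeeds with $K,K'$ arbitrarily large relative to $1/\kappa$ precisely because the inputs $f_{j,\omega}, g_{j',\omega}$ there are bounded by $1$: after clearing denominators, the product $f_{\omega;\vec H} = \prod_j f_{j,\omega}(\cdot + \omega\cdot\vec H_j)$ in \eqref{foh} still satisfies $|f_{\omega;\vec H}| \leq 1$, so Cauchy--Schwarz--Gowers reduces everything to a \emph{single} Gowers norm $\|\nu-1\|_{U^{D_*}}$ regardless of $K$. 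In your setting the $H_i$ carry factors bounded only by $\nu$; the analogous product is then a product of $k$ shifted copies of $\nu$-bounded functions, and bounding its Gowers norm requires the polynomial forms condition with on the order of $k\cdot 2^{D_*}$ many $\nu$-factors. Since $k$ is the iteration count of the dense model theorem and is not bounded in terms of $\kappa$, this forces $\kappa$ to depend on $k$, a circularity the paper's architecture was specifically designed to avoid (see the remark immediately following the statement of Proposition~\ref{vari}).
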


\begin{proof}  We just establish the claim \eqref{xas}, as the claim \eqref{sax} is proven by an essentially identical argument.

Let $J$ denote the number of indices $\omega' \in \{0,1\}^D \backslash \{0\}^D$ for which one is assuming the bound $|g'_{\omega}(x)| \leq \nu(x)$ (rather than $|g'_{\omega}(x)| \leq 1$), thus $0 \leq J \leq 2^D-1$.  We prove the claim by induction on $J$.  For $J=0$, the claim is immediate from Proposition \ref{dense-model}.  Now suppose inductively that $J \geq 1$, and that the claim has already been proven for $J=1$.

By hypothesis, we have a tuple $\omega_0$ in $\{0,1\}^D \backslash \{0\}^D$ such that $g'_{\omega_0}$ is bounded in magnitude by $\nu$, with $g_\omega$ being bounded by $\nu$ for $J-1$ of the other indices $\{0,1\}^D \backslash \{0\}^D$ and by $1$ for all remaining indices.  We also adopt the convention $g'_{\{0\}^D} \coloneqq  f-f'$, so that we may write the left-hand side of \eqref{xas} as
$$ \E_{\vec h \in [M]^r} \E_{x \in \Z/N\Z} \E_{\vec k \in \prod_{i=1}^D P_i(\vec h)[-M,M]} \prod_{\omega \in \{0,1\}^D} g'_{\omega}( x + \omega \cdot \vec k ).$$
Making the change of variables $x \mapsto x - \omega_0 \cdot \vec k$, we may write this as
$$ \E_{x \in \Z/N\Z} g_{\omega_0}(x) F(x)$$
where
$$ F(x) \coloneqq  \E_{\vec h \in [M]^r} \E_{\vec k \in \prod_{i=1}^D P_i(\vec h)[-M,M]} \prod_{\omega \in \{0,1\}^D \backslash \{\omega_0\}} g'_{\omega}( x + (\omega-\omega_0) \cdot \vec k ).$$
Since $g_{\omega_0}$ is bounded in magnitude by $\nu$, we see from \eqref{noo} and Cauchy-Schwarz that it suffices to show that
$$ \E_{x \in \Z/N\Z} \nu(x) F(x)^2 = o(1).$$
We split this into two claims
\begin{equation}\label{cl-1}
\E_{x \in \Z/N\Z} (\nu(x)-1) F(x)^2 = o(1)
\end{equation}
and
\begin{equation}\label{cl-2}
\E_{x \in \Z/N\Z} F(x)^2 = o(1).
\end{equation}
We begin with \eqref{cl-1}.  We can expand the left-hand side as
\begin{align*} & \E_{\vec h, \vec h' \in [M]^r} \E_{x \in \Z/N\Z} \E_{\vec k \in \prod_{i=1}^D P_i(\vec h)[-M,M] \times \prod_{i=1}^D P_i(\vec h')[-M,M]} 
(\nu(x)-1) \\
&\quad \prod_{\omega \in \{0,1\}^D \backslash \{\omega_0\}} g'_{\omega}( x + (\omega-\omega_0,0) \cdot \vec k ) g'_{\omega}( x + (0,\omega-\omega_0) \cdot \vec k ).
\end{align*}
By the Gowers-Cauchy-Schwarz inequality, we can bound this by
\begin{align*}
&\E_{\vec h, \vec h' \in [M]^r} \| \nu(x)-1\|_{\Box^{2D}_{P_1(\vec h)[-M,M],\dots,P_D(\vec h)[-M,M],P_1(\vec h')[-M,M],\dots,P_D(\vec h')[-M,M]}} \\
&\quad
\prod_{\omega \in \{0,1\}^D \backslash \{\omega_0\}} \| g'_\omega \|_{\Box^{2D}_{P_1(\vec h)[-M,M],\dots,P_D(\vec h)[-M,M],P_1(\vec h')[-M,M],\dots,P_D(\vec h')[-M,M]}}.
\end{align*}
Since $g'_\omega$ is bounded in magnitude by $\nu+1 = (\nu-1)+2$, it thus suffices by H\"older's inequality to show that
$$
\E_{\vec h, \vec h' \in [M]^r} \| \nu-1\|_{\Box^{2D}_{P_1(\vec h)[-M,M],\dots,P_D(\vec h)[-M,M],P_1(\vec h')[-M,M],\dots,P_D(\vec h')[-M,M]}}^{2^{2D}} = o(1).$$
But this follows from Proposition \ref{gu}.

Now we show \eqref{cl-2}.  By H\"older's inequality, it suffices to show that
\begin{equation}\label{cl-4}
\E_{x \in \Z/N\Z} F(x)^4 \ll_{r,D} 1 + o(1)
\end{equation}
and
\begin{equation}\label{cl-abs}
\E_{x \in \Z/N\Z} |F(x)| = o(1).
\end{equation}
To prove \eqref{cl-4}, we bound $g'_\omega$ by $\nu+1$.  The estimate then follows from the following claim, proven in Appendix \ref{polyapp}:

\begin{lemma}\label{ps2}  
We have
$$ \E_{x \in \Z/N\Z} \left(\E_{\vec h \in [M]^r} \E_{\vec k \in \prod_{i=1}^D P_i(\vec h)[-M,M]} \prod_{\omega \in \{0,1\}^D \backslash \{\omega_0\}} \left(1+\nu( x + (\omega-\omega_0) \cdot \vec k )\right)\right)^4 \ll_{r,D} 1 + o(1).$$
\end{lemma}

Finally, we show \eqref{cl-abs}.  It suffices to show that
$$\E_{x \in \Z/N\Z} g(x) F(x) = o(1)$$
whenever $g \colon \Z/N\Z \to [-1,1]$ is a function.  But this follows from the induction hypothesis, since the left-hand side is simply \eqref{xas} with $g'_{\omega_0}$ replaced with $g$.  This proves \eqref{xas}; the estimate \eqref{sax} is proven similarly and is left to the reader.
\end{proof}

\begin{corollary}\label{Coro} Let the notation and hypotheses be as in Theorem \ref{l1}.  Let $f'$ be the function in Proposition \ref{dense-model}.  Then one has
\begin{equation}\label{sax-1}
 \| f - f' \|_{U^{D_*}_{q[A^{-2D_*}M^{d_0}]}} = o(1)
\end{equation}
for all $q \leq A^{D_*}$, and also
\begin{equation}\label{xas-1}
 \E_{\vec h \in [M]^r} \| f - f' \|_{\Box^D_{P_1(\vec h)[-M,M], \dots, P_D(\vec h)[-M,M]}} = o(1).
\end{equation}
\end{corollary}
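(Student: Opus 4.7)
The plan is to reduce both bounds to the orthogonality estimates \eqref{sax} and \eqref{xas} supplied by the densification proposition. The essential preliminary is that by \eqref{kap} we have $|f| \leq \Lambda'_{b,W} \ll_\kappa \nu$, while Proposition \ref{dense-model} gives $|f'| \ll_\kappa 1$. Hence, after rescaling by a fixed constant $C_\kappa$, each of $f$ and $f'$ is a legitimate function as appears in the densification hypothesis, dominated respectively by $\nu$ and by $1$.

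To prove \eqref{sax-1} I would start from the identity
$$\|f-f'\|_{U^{D_*}_{q[A^{-2D_*}M^{d_0}]}}^{2^{D_*}} = \E_{x \in \Z/N\Z} (f-f')(x)\, {\mathcal D}^{D_*}_{q[A^{-2D_*}M^{d_0}]}(f-f')(x),$$
in which on the right the single function $f-f'$ occupies all $2^{D_*}-1$ nontrivial slots of the dual. Multilinearity lets me expand $(f-f')$ slot by slot, writing the right-hand side as a signed sum of $2^{2^{D_*}-1}$ terms of the shape
$$\pm\, \E_{x \in \Z/N\Z} (f-f')(x)\, {\mathcal D}^{D_*}_{q[A^{-2D_*}M^{d_0}]}(\vec g)(x),$$
where each $g_\omega$ equals either $f$ or $f'$. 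Every such $g_\omega$ is, up to the uniform constant $C_\kappa$, either $\nu$-dominated or $1$-dominated, so after scaling out $C_\kappa^{2^{D_*}-1}$ the estimate \eqref{sax} contributes $o(1)$ for each term. The number of terms depends only on the fixed parameter $D_*$, so summing and taking $2^{D_*}$-th roots gives \eqref{sax-1}.

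For \eqref{xas-1}, H\"older's inequality reduces matters to
$$\E_{\vec h \in [M]^r} \|f-f'\|^{2^D}_{\Box^D_{P_1(\vec h)[-M,M], \dots, P_D(\vec h)[-M,M]}} = o(1).$$
Expanding the box norm as an inner product and interchanging the $\vec h$-average with the $x$-integration rewrites this as
$$\E_{x \in \Z/N\Z} (f-f')(x)\, \E_{\vec h \in [M]^r} {\mathcal D}^D_{P_1(\vec h)[-M,M],\dots, P_D(\vec h)[-M,M]}(f-f')(x),$$
and exactly the same multilinear expansion of the innermost $(f-f')$ factors reduces this to $2^{2^D-1}$ applications of \eqref{xas}, each contributing $o(1)$.

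The argument is essentially a mechanical repackaging of the densification proposition, which contains all of the genuine content; the only point that requires any care is tracking the constants $C_\kappa$ arising from the normalisations of $f$ and $f'$, but since $\kappa$ is fixed they are harmlessly absorbed into the $o(1)$ terms.
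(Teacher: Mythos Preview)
Your proof is correct and is essentially the same as the paper's. The paper compresses your multilinear expansion into the phrase ``by linearity, the bounds \eqref{sax}, \eqref{xas} continue to hold if the hypotheses that $|g_\omega|, |g'_\omega|$ are bounded by $1$ or by $\nu$ are replaced by $|g_\omega|, |g'_\omega| \ll_\kappa \nu+1$'' and then sets each $g_\omega$, $g'_\omega$ equal to $f-f'$ directly; your explicit slot-by-slot expansion into $f$ and $f'$ is precisely what that linearity step amounts to.
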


\begin{proof}
By linearity, the bounds \eqref{sax}, \eqref{xas} continue to hold if the hypotheses that $|g_\omega|, |g'_\omega|$ are bounded by $1$ or by $\nu$ are replaced by $|g_\omega|, |g'_\omega| \ll_\kappa \nu+1$.  In particular, we can set all of the $g_\omega, g'_\omega$ equal to $\Lambda'_{b,W}-f$.  The bound \eqref{sax} then gives \eqref{sax-1}, while \eqref{xas} gives
$$
 \E_{\vec h \in [M]^r} \| f - f' \|_{\Box^D_{P_1(\vec h)[-M,M], \dots, P_D(\vec h)[-M,M]}}^{2^D} = o(1)
$$
which implies \eqref{xas-1} by H\"older's ineqality.
\end{proof}

The bound \eqref{xas-1} already gives the first claim of Theorem \ref{l1}.  It remains to prove the second claim. Thus we may now assume that $d_0=d$, $M \geq (A^{-1} N)^{1/d}$, and $f = \Lambda'_{b,W}$.

We now invoke the following estimates on global uniformity norms of the von Mangoldt function, arising from the results in \cite{gt-primes} (using the inverse Gowers and Mobius-nilsequences conjectures proven in \cite{gtz-uk}, \cite{gt-mobius} respectively):

\begin{proposition}[Global Gowers uniformity]\label{l0}  Let the notation and hypotheses be as in Theorem \ref{l1}, with $d_0=d$, $M \geq (A^{-1} N)^{1/d}$, and $f = \Lambda'_{b,W}$.  Then one has
$$ \| \Lambda'_{b,W} - 1 \|_{U^{D_*}_{q[A^{-2D_*}M^{d_0}]}} = o(1)$$
for all $q \leq A^{D_*}$.
\end{proposition}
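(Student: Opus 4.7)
My plan is to deduce this from the global Gowers uniformity of $\Lambda'_{b,W} - 1$ on $\Z/N\Z$, namely $\|\Lambda'_{b,W}-1\|_{U^{D_*}(\Z/N\Z)} = o(1)$ for any fixed $D_*$, which is the unconditional form of the Green-Tao estimate from \cite{gt-primes} obtained by combining that paper with the inverse Gowers theorem \cite{gtz-uk} and the M\"obius-nilsequences theorem \cite{gt-mobius}. The first step is to unpack what the hypotheses do. Since $d_0 = d$ and $M \geq (A^{-1}N)^{1/d}$, one has $M^{d_0} = M^d \geq A^{-1}N$, so the progression $Q := q[A^{-2D_*}M^{d_0}]$ has length $|Q| \geq A^{-2D_*-1}N$ and common difference $q \leq A^{D_*}$. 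Since $A = W^{1/\kappa} = N^{o(1)}$, both $q$ and $N/|Q|$ are of size $N^{o(1)}$, so in particular $|Q| \geq N^{1-o(1)}$: the progression $Q$ is a polynomial-in-$A$ (hence $N^{-o(1)}$) fraction of $\Z/N\Z$.

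There are two natural routes from the global estimate to the required local one. The direct approach is via a change of variables. After embedding into a cyclic group of size $N'' = O(N)$ chosen coprime to $q$, the substitution $y \mapsto qy$ identifies $\|\Lambda'_{b,W}-1\|_{U^{D_*}_Q}$ with $\|\tilde\Lambda - 1\|_{U^{D_*}_{[K]}}$ for $K = A^{-2D_*}M^{d_0}$, where $\tilde\Lambda$ is the normalization of the von Mangoldt function restricted to the residue class $b \pmod{Wq}$ (i.e., the $W$-trick but with modulus $Wq$ in place of $W$). Since $Wq = N^{o(1)}$ and $K \geq N^{1-o(1)}$, the Green-Tao estimate applies verbatim to $\tilde\Lambda$ and yields $\|\tilde\Lambda - 1\|_{U^{D_*}_{[K]}} = o(1)$.

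The alternative route is via the inverse theorem: if $\|\Lambda'_{b,W}-1\|_{U^{D_*}_Q} \geq \delta$ for some fixed $\delta>0$, then a progression-valued version of the Green-Tao-Ziegler inverse theorem \cite{gtz-uk} produces a nilsequence $\Phi$ of complexity $O_{\delta, D_*}(1)$ correlating with $\Lambda'_{b,W}-1$ on a translate of $Q$ with correlation $\gg_\delta 1$. Reparametrizing $n \mapsto qn$ and using $|Q| \geq N^{1-o(1)}$ produces a correlation of $\Lambda'_{b,W}-1$ with a nilsequence of comparable complexity on an interval of length $\geq N^{1-o(1)}$; the M\"obius-nilsequences theorem \cite{gt-mobius} (transferred from $\mu$ to $\Lambda$ by summation by parts) then forces this correlation to be $o(1)$, a contradiction.

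The main technical obstacle, common to both approaches, is handling the common difference $q$ and its compatibility with the $W$-trick. This turns out to be essentially automatic: shifts by $q$ in the $\Lambda'_{b,W}$ coordinate correspond to shifts by $Wq$ in the underlying prime variable, which preserve the residue class $b \pmod{W}$ on which $\Lambda'_{b,W}$ is concentrated, so no new arithmetic obstruction is introduced. The polynomial-in-$\log N$ quantitative decay furnished by the M\"obius-nilsequences theorem is strong enough to absorb the $N^{o(1)}$ losses incurred from $q$, from enlarging $W$ to $Wq$, and from passing to $|Q|$. This sharp use of polynomial decay is also why the hypothesis $M \geq (A^{-1}N)^{1/d}$ is essential here, in contrast to the first half of Theorem \ref{l1}, where $M$ may be as small as $\log^{1/\kappa} N$ and one must instead route through the concatenation and transference machinery developed in the earlier sections.
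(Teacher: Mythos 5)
Your overall strategy — deduce the local estimate from global Gowers uniformity of a $W$-tricked von Mangoldt function, absorbing the common difference $q$ — is the right idea, and the paper's own proof is in the same spirit (expand the $2^{D_*}$-th power, apply the linear forms result of \cite{gt-linear} together with \cite{gtz-uk}, \cite{gt-mobius}, and analyze the resulting local factors). However, your write-up contains a genuine gap precisely where you assert that things are ``essentially automatic'': the claim that ``no new arithmetic obstruction is introduced'' by the common difference $q$ is false, and the discrepancy is exactly what the paper's local factor analysis is there to quantify.

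To see the issue concretely: if you decompose the norm over residue classes $s \in [q]$ by writing $x = qy + s$, you obtain (up to boundary errors) $\E_{s \in [q]} \|g_s - 1\|_{U^{D_*}_{[K]}}^{2^{D_*}}$ where $g_s(y) = \frac{\phi(W)}{W}\Lambda'(Wqy + Ws + b)$. Writing $q = q_1 q_2$ with $q_1 \mid W^\infty$ and $\gcd(q_2,W)=1$, one finds that for $s$ with $\gcd(Ws+b,q)>1$ (a set of density $1-\phi(q_2)/q_2$ in $[q]$), $g_s$ is essentially zero and $\|g_s-1\|_{U^{D_*}} \approx 1$; and for the remaining ``good'' $s$, the normalization $\frac{\phi(W)}{W}$ is off from the correct $Wq$-tricked normalization $\frac{\phi(Wq)}{Wq}$ by a multiplicative factor $\frac{q_2}{\phi(q_2)}$, so $g_s - 1 = \frac{q_2}{\phi(q_2)}(\tilde\Lambda - 1) + (\frac{q_2}{\phi(q_2)} - 1)$. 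Neither the bad-residue density $1-\phi(q_2)/q_2$ nor the normalization error $\frac{q_2}{\phi(q_2)}-1$ is identically zero; both are $O_{D_*,\kappa}(1/\log w)$ only because $q \leq A^{D_*}=W^{D_*/\kappa}$ has at most $O_{D_*,\kappa}(w/\log w)$ prime factors exceeding $w$. This bound is exactly the content of the paper's computation of $\beta_p = 1 + O_{D_*}(1/p)$ for primes $p>w$ dividing $q$, and its absence from your argument is a real gap — without it, you cannot conclude that the aggregate contribution of $q$ is $o(1)$. Similarly, your ``polynomial-in-$\log N$ decay absorbs the $N^{o(1)}$ losses'' framing is not what the paper uses and does not obviously substitute for this bookkeeping: the paper instead fixes $w$, invokes the asymptotics of \cite{gt-linear}, obtains a main term $(1+O(1/A))\exp(O_{D_*,\kappa}(1/\log w))$, and only then lets $w$ tend to infinity slowly. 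Your second (inverse-theorem) route has the analogous difficulty: after reparametrizing $n \mapsto qn$ one still needs to control $\Lambda'$ on the progression $b + Ws \bmod Wq$ for $s$ ranging over residues that are not all admissible, which again forces the $q$-local analysis. In short, the conclusion you want is true, and your first route can be pushed through, but the handling of the primes dividing $q$ is a load-bearing step that must be made explicit rather than waved away.
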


\begin{proof}  Raising both sides to the power $2^{D_*}$ and expanding, it suffices to show that
$$ \E_{\vec h \in [A^{-2D_*}M^{d_0}]^{D_*}} \E_{x \in \Z/N\Z} \prod_{\omega \in I}  \Lambda'_{b,W}( x + q \omega \cdot \vec h) = 1 + o(1) $$
for all $I \subset \{0,1\}^{D_*}$.   The contribution of those $x$ for which $N - q D_* A^{-2D_*} M^{d_0} \leq x \leq N$ can be easily verified by standard upper bound sieves to be $o(1)$; replacing $\theta$ in \eqref{bawn} by the von Mangoldt function and removing the restriction $x \geq R$ also contributes an error of $o(1)$.  Thus it suffices to show that
\begin{equation}\label{wow}
 \E_{\vec h \in [A^{-2D_*}M^{d_0}]^{D_*}} \E_{x \in [N]} \prod_{\omega \in I} \frac{\phi(W)}{W} \Lambda( W(x + q \omega \cdot \vec h) + b) = 1 + o(1) 
\end{equation}
Suppose first that $w$ (and hence $W$ and $A$) is a fixed quantity independent of $N$.  Since 
$$ A^{-2D_*-1} N \leq A^{-2D_*} M^{d_0} \leq A^{-2D_*+1} N $$
we see that the quantity $A^{-2D_*} M^{d_0}$ is now comparable to $N$.  The expression \eqref{wow} is now of a form that can be handled by the results in \cite{gt-primes} (note that a modification of Example 2 from that paper shows that the linear forms here have finite complexity).  Using the results of that paper (and of \cite{gtz-uk}, \cite{gt-mobius}), we see that the left-hand side of \eqref{wow} is equal to
$$ (1+O(1/A)) \left(\frac{\phi(W)}{W}\right)^{|I|} \prod_p \beta_p + o(1)$$
where for each prime $p$, $\beta_p$ is the quantity
$$ \beta_p \coloneqq  \E_{\vec h \in (\Z/p\Z)^{D_*}} \E_{x \in \Z/p\Z} \prod_{\omega \in I} \frac{p}{p-1} 1_{p \nmid W(x + q \omega \cdot \vec h) + b}.$$
For $p \leq w$, the quantity $\beta_p$ simplifies to $(\frac{p}{p-1})^{|I|}$, and hence the left-hand side of \eqref{wow} simplifies to
$$ (1+O(1/A)) \prod_{p>w} \beta_p + o(1).$$
For $p>w$ not dividing $q$, the linear forms $(x,\vec h) \mapsto W(x + q \omega \cdot \vec h)$ are not scalar multiples of each other over $\Z/p\Z$, and one can then easily verify that $\beta_p = 1 + O_{D_*}(1/p^2) = \exp( O_{D_*}(1/p^2) )$ in these cases, leading to a net multiplicative contribution of $\exp( O_{D_*}( 1/w ) )$ to the product $\prod_{p>w} \beta_p$.  For $p>w$ dividing $q$, we can crudely estimate $\beta_p$ as $1 + O_{D_*}(1/p) = \exp( O_{D_*}(1/w) )$; since $q \leq A^{D_*} = W^{D_*/\kappa} = \exp( O_{D_*,\kappa}( w ) )$, the number of such primes $p$ is at most $O_{D_*,\kappa}(w / \log w)$.  We conclude that the net contribution of these primes to $\prod_{p>w} \beta_p$ is $\exp( O_{D_*,\kappa}(1 / \log w) )$.  We conclude that the left-hand side of \eqref{wow} is
$$ (1+O(1/A)) \exp( O_{D_*,\kappa}(1/\log w) ) + o(1) $$
for fixed $w$.  Letting $w$ grow sufficiently slowly to infinity, we obtain the claim.
\end{proof}

Combining Proposition \ref{l0} with Corollary \ref{Coro} and Theorem \ref{avg-norm}, we can now finally establish the second part of Theorem \ref{l1}.  Let $f'$ be the function in Proposition \ref{dense-model}.  From \eqref{sax-1}, Proposition \ref{l0}, and the triangle inequality we have
$$ \|f'-1\|_{U^{D_*}_{q[A^{-2D_*}M^{d_0}]}} = o(1)$$
for all $q \leq A^{D_*}$. Since $f'-1 = O_\kappa(1)$, we can divide by a constant depending only on $\kappa$ and invoke Theorem \ref{avg-norm} to conclude that
$$ \E_{\vec h \in [M]^r} \| f'-1 \|_{\Box^D_{P_1(\vec h)[-M,M], \dots, P_D(\vec h)[-M,M]}} = o(1).$$
Combining this with \eqref{xas-1} and the triangle inequality, we obtain the second claim of Theorem \ref{l1}.

\section{Applying a generalized von Neumann inequality}

We continue to use the parameters $d,r,d_0,D,\kappa,W,A,R,N$ from the previous section.
The objective of this section is to establish the following bound:

\begin{theorem}\label{l2}  For each $i=1,\dots,D$, let $b_i \in [W]$ be coprime to $W$.  Let $M_0$ be a quantity with $\log^{1/\kappa} N \leq M_0 \leq A N^{1/d}$, and set $M \coloneqq  A^{-1} M_0$.
For $i=1,\dots,D$, we set $f_i$ to be a function with the pointwise bounds $|f_i| \ll \Lambda'_{b_i,W}+1$.  For each $i=1,\dots,d$, let $R_i \in \Z[\m_1,\dots,\m_r]$ be a polynomial of degree at most $d$, with all coefficients bounded in magnitude by $A$.  Assume also that for $i=2,\dots,D$, $R_i - R_1$ has degree at least $d_0$.  Then
$$ \E_{x \in \Z/N\Z} \E_{\vec m \in [M_0]^r} \prod_{i=1}^D f_i( x + R_i( \vec m ) ) \ll \|f_1\|^c + o(1)$$
for some $1 \ll_{d,r,D} c \ll_{d,r,D} 1$, where $\|f_1\|$ is short for a norm of the form
\begin{equation}\label{sof}
\|f_1\| \coloneqq  \E_{\vec h \in [M]^r} \| f_1 \|_{\Box^{D'}_{P_1(\vec h)[-M,M], \dots, P_{D'}(\vec h)[-M,M]}} 
\end{equation}
for some natural number $D' = O_{d,r,D}(1)$, and some polynomials $P_1,\dots,P_{D'} \in \Z[\h_1,\dots,\h_r]$ of degree between $d_0-1$ and $d-1$ and coefficients $O( A^{O_{d,r,D}(1)} )$.
\end{theorem}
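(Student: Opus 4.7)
The plan is to prove Theorem \ref{l2} via an iterated Cauchy--Schwarz / van der Corput (``PET'') argument, in the spirit of the generalised von Neumann theorems of \cite{gt-primes} and \cite{tz-pattern}, but tuned so that the output is precisely an averaged Gowers box norm of $f_1$ of the form \eqref{sof}, suitable for feeding directly into Theorem \ref{avg-norm} (and so, ultimately, into Theorem \ref{l1}).

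First, by translation invariance of $\E_x$ on $\Z/N\Z$, substitute $x \mapsto x - R_1(\vec m)$ to replace each $R_i$ by $S_i \coloneqq R_i - R_1$; this makes $f_1$ evaluated at $x$ while each $f_i$ ($i \geq 2$) is evaluated at $x + S_i(\vec m)$, with $\deg S_i \in [d_0, d]$ for $i \geq 2$. Then iterate Cauchy--Schwarz in the averaging variable $\vec m$ a total of $D-1$ times, once per factor $f_i$ with $i \geq 2$: at the $j$-th step use $M = A^{-1} M_0 \ll M_0$ to freely replace $\vec m$ by $\vec m + \vec h_j$ for a new shift $\vec h_j \in [M]^r$ up to $o(1)$ error, then apply Cauchy--Schwarz to double the system and eliminate one of the factors $f_i$. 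The resulting ``shift'' polynomials are differences of the form $S_i(\vec m + \sum_{j \in \omega}\vec h_j) - S_i(\vec m + \sum_{j \in \omega'}\vec h_j)$; regarded as polynomials in a single $\vec h_j$ with the remaining variables fixed, these have degree exactly $\deg S_i - 1 \in [d_0-1, d-1]$ and coefficients of size $O(A^{O_{d,r,D}(1)})$.

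The ``eliminated'' factors are controlled via the pointwise majorant $|f_i| \ll \Lambda'_{b_i,W} + 1 \ll \nu_{b_i} + 1$. Writing $\nu + 1 = 2 + (\nu - 1)$ and expanding, all cross-correlations involving $\nu-1$ must be shown to contribute only $o(1)$; \textbf{this is the main obstacle}, and is handled by Proposition \ref{gu} after verifying, in the style of the ``linear forms condition'' of \cite[\S 5]{gt-primes}, that the multilinear forms in $(\vec m, \vec h_1, \dots, \vec h_{D-1})$ arising from the shifts $x + S_i(\vec m + \sum_{j \in \omega} \vec h_j)$ are genuinely non-degenerate, so that the pseudorandomness estimates of Proposition \ref{gu} are applicable. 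This non-degeneracy follows from the hypothesis $\deg(R_i - R_1) \geq d_0$ together with a careful PET-style ordering of the Cauchy--Schwarz steps, so that at each stage the leading-degree component of the polynomial being eliminated is genuinely present and linearly independent of the surviving ones.

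After all $D-1$ Cauchy--Schwarz steps, $f_1$ appears in a $2^{D-1}$-fold box configuration whose box sides are of the form $P_j(\vec h)[-M,M]$ for polynomials $P_j$ (in an appropriate consolidation of the shift variables $\vec h_1, \dots, \vec h_{D-1}$ into a single vector $\vec h$) with degrees in $[d_0-1,d-1]$ and coefficients $O(A^{O_{d,r,D}(1)})$. Applying Fubini and then H\"older's inequality to convert the $2^{D-1}$-th power of the resulting averaged box norm into its first power yields the stated bound $\ll \|f_1\|^c + o(1)$ for some $c = 2^{-O_{d,r,D}(1)}$ and some $D' = O_{d,r,D}(1)$, as required.
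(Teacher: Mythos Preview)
Your high-level strategy---van der Corput differencing plus iterated Cauchy--Schwarz, with the discarded factors controlled by pseudorandomness of the majorant---is the right one, and matches the paper in spirit. But there is a genuine gap: the claim that exactly $D-1$ Cauchy--Schwarz steps suffice, ``once per factor $f_i$ with $i\geq 2$'', is only valid when all the $S_i = R_i - R_1$ are \emph{linear} in $\vec m$. When $\deg S_i \geq 2$, a single van der Corput step does not eliminate a factor; it \emph{doubles} all the $\vec m$-dependent factors while reducing the $\vec m$-degree of their pairwise differences by one. For example, with $D=2$, $R_1=0$, $R_2=\m^d$, one step produces two copies of $f_2$ at shifts $(m+h')^d$ and $(m+h'')^d$, whose difference has $\vec m$-degree $d-1$; a further $d-2$ van der Corput steps are needed before the system becomes linear in $\vec m$, and the number of surviving factors grows geometrically along the way. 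The paper handles this by passing to a more general notion of ``polynomial system'' (Theorem~\ref{l3}) and running PET induction on a lexicographic weight vector encoding the degree structure (Proposition~\ref{induct}), reducing first to the linear-in-$\vec m$ case; only then does a weighted Cauchy--Schwarz--Gowers ``parallelopipedization'' step produce the averaged box norm of $f_{\alpha_0}$. Your sketch conflates these two phases.

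Two related inaccuracies: first, the difference $S_i(\vec m + \sum_{j\in\omega}\vec h_j) - S_i(\vec m + \sum_{j\in\omega'}\vec h_j)$ has degree $\deg S_i$, not $\deg S_i - 1$, as a polynomial in a single $\vec h_j$ (what drops by one is the $\vec m$-degree). The correct degree bounds $[d_0-1,d-1]$ on the $P_j$ in \eqref{sof} arise instead because, after PET reduction to a system linear in $\vec m$, the coefficient vectors $\vec a_\alpha(\vec h)$ of $\vec m$ carry total degree one less than the original $R_\alpha$. Second, the pseudorandomness input you need at each step is not Proposition~\ref{gu} (which concerns specific global Gowers norms of $\nu-1$) but the general polynomial forms condition of Theorem~\ref{poly}, invoked via Lemmas~\ref{st-lemma} and~\ref{gopw} to show that the $\nu$-weights introduced at each Cauchy--Schwarz step contribute $1+o(1)$.
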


The arguments in \cite[\S 5]{tz-pattern} (based on van der Corput's method, the Cauchy-Schwarz inequality, PET induction, and a ``polynomial forms'' condition on the pseudorandom majorant $\nu$) ``morally'' permit one to deduce Theorem \ref{l2} from Theorem \ref{l1}.  However, the setup here differs from that in \cite{tz-pattern} in several minor technical aspects, most notably the multidimensional nature of the parameter $\m$, the non-constancy of the $b_i$ parameter in $i$, and the much larger value of the scale parameter $M_0$.  As such, we will need to adapt the argument from \cite{tz-pattern} to the current setting.

For inductive purposes it is convenient to prove a more general form of Theorem \ref{l2}. 
We need the following definitions (inspired by, though not absolutely identical to, analogous definitions in \cite{tz-pattern}):

\begin{definition}[Polynomial system]  A \emph{polynomial system} ${\mathcal S}$ consists of the following objects:
\begin{itemize}
\item An integer $D_{\mathcal S} \geq 0$, which we will call the \emph{number of fine degrees of freedom};
\item A non-empty finite index set ${\mathcal A}$ (the elements of which we will call the \emph{nodes} of the system);
\item A polynomial $R_\alpha \in \Z[\m_1,\dots,\m_r,\h_1,\dots,\h_{D_{\mathcal S}}]$ of degree at most $d$ attached to each node $\alpha \in {\mathcal A}$;
\item A distinguished node $\alpha_0 \in {\mathcal A}$;
\item A (possibly empty) collection ${\mathcal A}' \subset {\mathcal A} \backslash \{\alpha_0\}$ of \emph{inactive nodes}.	The nodes in ${\mathcal A} \backslash {\mathcal A}'$ will be referred to as \emph{active}, thus for instance the distinguished node $\alpha_0$ is always active.
\end{itemize}
We say that a node $\alpha$ is \emph{linear} if $R_\alpha - R_{\alpha_0}$ is at most linear in $\m_1,\dots,\m_r$ (i.e. it has degree at most $1$ when viewed as a polynomial in $\m_1,\dots,\m_r$ with coefficients in $\Z[\h_1,\dots,\h_{D_{\mathcal S}}]$), thus for instance the distinguished mode $\alpha_0$ is always linear.  We say that ${\mathcal S}$ is linear if all active nodes are linear.  We require polynomial systems to obey three additional axioms:
\begin{itemize}
\item If $\alpha,\beta$ are distinct nodes in ${\mathcal A}$, then $R_\alpha - R_{\beta}$ is not constant in $\m_1,\dots,\m_r,\h_1,\dots,\h_{D_{\mathcal S}}$ (i.e. it does not lie in $\Z$).
\item If $\alpha \in {\mathcal A} \backslash \{\alpha_0\}$, then $R_\alpha - R_{\alpha_0}$ has degree at least $d_0$ in $\m_1,\dots,\m_r,\h_1,\dots,\h_{D_{\mathcal S}}$.
\item If $\alpha,\beta$ are distinct \emph{linear} nodes in ${\mathcal A}$, then $R_\alpha - R_\beta$ is not constant in $\m_1,\dots,\m_r$ (i.e. it does not lie in $\Z[\h_1,\dots,\h_{D_{\mathcal S}}]$).
\end{itemize}
\end{definition}

\begin{definition}[Realizations and averages]  Let ${\mathcal S}$ be a polynomial system.  A \emph{realization} $\vec f = (f_\alpha)_{\alpha \in {\mathcal A}}$ of ${\mathcal S}$ is an assignment of functions $f_\alpha \colon \Z/N\Z \to \R$ to each node $\alpha$ with the following properties:
\begin{itemize}
\item For any node $\alpha$, one has the pointwise bound $|f_\alpha| \ll \nu_{b_\alpha}+1$ for some $b_\alpha \in [W]$ coprime to $W$.
\item For any inactive node $\alpha$, one has $f_\alpha = \nu_{b_\alpha}+1$ for some $b_\alpha \in [W]$ coprime to $W$.
\end{itemize}
We define the \emph{average} $\Lambda_{\vec S}(\vec f)$ to be the quantity
\begin{equation}\label{lvs-def}
\Lambda_{\vec S}(\vec f) \coloneqq  \E_{x \in \Z/N\Z} \E_{\vec m \in [M_0]^r} \E_{\vec h \in [M]^{D_{\mathcal S}}} \prod_{\alpha \in {\mathcal A}} f_\alpha( x + R_\alpha( \vec m, \vec h ) ).
\end{equation}
\end{definition}

Theorem \ref{l2} is then a special case of the following more general statement.

\begin{theorem}\label{l3} Let $C_0$ be a quantity depending only on $d,r,D$, and assume $\kappa$ sufficiently large depending on $C_0$.  Let $C_1$ be a quantity depending only on $d,r,D,C_0,\kappa$ (in particular, $C_0,C_1$ are independent of $N$).  Let ${\mathcal S}$ be a system with at most $C_0$ nodes and at most $C_0$ fine degrees of freedom, with all polynomials $R_\alpha$ associated to the system having coefficients bounded in magnitude by $A^{C_1}$.  Let $\vec f$ be a realization of ${\mathcal S}$.  Then
\begin{equation}\label{lsf}
 \Lambda_{\vec S}(\vec f) \ll \|f_{\alpha_0}\|^c + o(1)
\end{equation}
for some $c>0$ depending on $d,r,D,C_0,C_1$, and $\|f_1\|$ is a norm of the form \eqref{sof} with $D' = O_{d,r,D,C_0,C_1}(1)$, and $P_1,\dots,P_{D'} \in \Z[\h_1,\dots,\h_r]$ of degree between $d_0-1$ and $d-1$ with coefficients $A^{O_{d,r,D,C_0,C_1}(1)}$.
\end{theorem}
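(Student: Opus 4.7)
The plan is to prove Theorem \ref{l3} by PET induction on a complexity measure of the polynomial system $\mathcal{S}$, following the template of the generalized von Neumann theorem in \cite{tz-pattern} but adapted to the multidimensional parameter $\vec m$ and to node-dependent residue classes $b_\alpha$. Assign to $\mathcal{S}$ a weight vector $(w_d, w_{d-1}, \ldots, w_1)$ in which $w_s$ counts the number of active nodes $\alpha \neq \alpha_0$ for which $R_\alpha - R_{\alpha_0}$ has total degree exactly $s$ in the $\m$-variables, and order polynomial systems by reverse lexicographic order on these vectors.

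The base case is when every active node is linear, i.e.\ $R_\alpha - R_{\alpha_0}$ has degree at most $1$ in $\m_1,\dots,\m_r$. In this case, one majorizes each inactive factor by $\nu_{b_\alpha}+1$ and performs $|\mathcal{A}|-1$ successive Cauchy--Schwarz steps in the $\m$-variables to isolate $f_{\alpha_0}$. The linear differences $R_\alpha - R_{\alpha_0}$, which by the system axioms are non-constant in $\vec m$, produce the shifts for a Gowers box norm of $f_{\alpha_0}$ of the form \eqref{sof}, with $D' = |\mathcal{A}|-1$ and with $P_1,\dots,P_{D'}$ the coefficients of these differences viewed as polynomials in $\h$; their degrees lie between $d_0-1$ and $d-1$ thanks to the second system axiom combined with the overall degree bound $d$. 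The remaining ``dual'' factors involving $\nu_{b_\alpha}+1$ contribute $1+o(1)$ by the multidimensional polynomial linear-forms estimates of Proposition \ref{gu} and Appendix \ref{polyapp}.

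For the inductive step, pick $\m_j$ and an active node $\alpha$ such that the $\m_j$-degree of $R_\alpha - R_{\alpha_0}$ is maximal, say equal to $s \geq 2$. Perform van der Corput / Cauchy--Schwarz in $\m_j$: after squaring $\Lambda_{\vec S}(\vec f)$ and translating $\m_j \mapsto \m_j + \tau$ with $\tau \in [M]$, one obtains a new system $\mathcal{S}'$ in which each original node produces a $\tau=0$ copy and a $\tau$-shifted copy; differences between the two copies lose one degree in $\m_j$, while differences within a single copy are unchanged. Redesignate one copy of $\alpha_0$ as the new distinguished node, merge or delete nodes whose shifted differences have collapsed to enforce the three system axioms, and check that the weight vector of $\mathcal{S}'$ is strictly smaller than that of $\mathcal{S}$. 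The number of nodes and fine degrees of freedom at most doubles, and the coefficients are enlarged by an $A^{O(1)}$ factor; after $O_{d,r,D,C_0}(1)$ PET steps the weight becomes zero and the base case applies, which accounts for the replacement of $C_0,C_1$ by larger bounds and explains the quantifier order in the statement (and the need to take $\kappa$ small at the end).

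The main obstacle is bookkeeping: verifying that after each van der Corput step the three axioms of a polynomial system persist, in particular that the newly distinguished node retains an active neighbour whose difference with it still has degree at least $d_0$. This reduces to a careful analysis of how the $d_0$-lower bound transforms under differentiation in $\m_j$, and follows from the maximal-degree choice of pivot variable together with the initial hypothesis (inherited from Theorem \ref{l2}) that $R_i - R_1$ has degree at least $d_0$. A secondary subtlety is that the node-dependent residues $b_\alpha$ force a separate majorant $\nu_{b_\alpha}+1$ at each node throughout the induction, but this is precisely the setting covered by the polynomial forms estimates of Appendix \ref{polyapp}, so no new analytic input is required beyond checking that the number of forms and the coefficient bounds remain admissible at each stage; these bounds are what force $\kappa$ (and hence the permitted size of $A$) to shrink with $C_0$.
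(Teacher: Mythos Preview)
Your proposal follows the same three-stage architecture as the paper: a PET induction (van der Corput + Cauchy--Schwarz) to reduce to the linear case, then Cauchy--Schwarz in the linear case to extract an averaged Gowers box norm of the form \eqref{sof}, with the polynomial forms estimates from Appendix \ref{polyapp} disposing of the $\nu$-weighted error terms. So the skeleton is right, and you correctly identify the two places where care is needed (the $d_0$ lower bound surviving the induction, and the node-dependent majorants $\nu_{b_\alpha}$).

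However, the PET step as you describe it has a genuine gap. Your weight vector counts active nodes by the $\m$-degree of $R_\alpha - R_{\alpha_0}$, and your pivot is simply $\m_j \mapsto \m_j + \tau$ followed by duplication. This weight does not decrease under that operation: take $R_{\alpha_0}=0$, $R_2=\m^2$, $R_3=2\m^2$, so $w_2=2$. After the shift, each of the four non-distinguished copies has polynomial $\m^2$, $(\m+\tau)^2$, $2\m^2$, or $2(\m+\tau)^2$, all of $\m$-degree $2$ relative to $R_{\alpha_0}=0$; hence $w_2=4$, and the induction stalls. What the paper does differently is essential: it first picks an \emph{extreme} node $\alpha$ (maximal distance from $\alpha_0$), subtracts $R_\alpha$ so that the factor at $\alpha$ becomes independent of $\vec m$ and can be pulled outside the Cauchy--Schwarz, and then measures complexity by the number of \emph{equivalence classes} (nodes are identified when their mutual $\m$-degree is strictly smaller than their distance to the reference node) relative to a newly selected extreme node $\tilde\alpha$ in the doubled system. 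It is this combination---subtracting the extreme polynomial before squaring, and counting leading-term classes rather than raw nodes---that forces the weight to drop; neither ingredient appears in your sketch.

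A second, smaller point: in the linear base case the paper does not perform $|\mathcal A|-1$ successive Cauchy--Schwarz steps but rather a single application of the weighted Cauchy--Schwarz--Gowers inequality (the ``parallelopipedization'' of \cite[Proposition A.2]{tz-pattern}), followed by one further Cauchy--Schwarz to remove the $\nu$-weight and compare with the unweighted box norm \eqref{sof}. Your description would need to be tightened here as well to track the interaction between the majorants $\nu_{b_\alpha}$ and the linear shifts $\vec a_\alpha(\vec h)\cdot\vec m$.
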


Indeed, Theorem \ref{l2} is the special case in which the system ${\mathcal S}$ consists of the nodes ${\mathcal A} = \{1,\dots,D\}$ with distinguished node $\alpha_0 = 1$ and all nodes active, with $D_{\mathcal S}=0$, and $R_i$ and $b_i$ as indicated by Theorem \ref{l2}.

It remains to establish Theorem \ref{l3}.  This will follow the same three-step procedure used in \cite{tz-pattern}.

\subsection{Reduction to the linear case}

The first step is to use the van der Corput method and PET induction to reduce matters to the linear case.  We need some further definitions, again essentially from \cite{tz-pattern}.

Given two nodes $\alpha,\beta$ in a polynomial system ${\mathcal S}$, we define the \emph{distance} $d(\alpha,\beta)$ between the nodes to be the degree in $\m_1,\dots,\m_r$ of the polynomial $R_\alpha - R_\beta$.  This distance is symmetric, reflexive, and obeys the ultrametric triangle inequality
$$ d(\alpha,\gamma) \leq \max( d(\alpha,\beta), d(\beta,\gamma))$$
for all nodes $\alpha,\beta,\gamma$.  We define the \emph{diameter} of the system to be the maximal value of $d(\alpha,\beta)$ for $\alpha,\beta$ ranging over active nodes, and define an \emph{extreme} node to be an active node $\alpha$ such that $d(\alpha,\alpha_0)$ is equal to the diameter of ${\mathcal S}$; note from the ultrametric triangle inequality that there is always at least one such node.

Given a node $\alpha$, we then call two nodes $\beta,\gamma$ \emph{equivalent} relative to $\alpha$ if $d(\beta,\gamma) < d(\beta,\alpha)$; this is an equivalence relation on nodes, and every equivalence class has a well-defined distance to $\alpha$.  We then define the \emph{weight} $\vec w_\alpha({\mathcal S})$ of ${\mathcal S}$ relative to $\alpha$ to be the vector $(w_1,\dots,w_d) \in \Z_+^d$, where $w_i$ is the number of equivalence classes relative to $\alpha$ at a distance $i$ from $\alpha$.  Thus for instance ${\mathcal S}$ will be linear if and only if the weight $\vec w_\alpha({\mathcal S})$ relative to a node $\alpha$ takes the form $(w_1,0,\dots,0)$.  We order weights lexicographically, thus $(w_1,\dots,w_d) < (w'_1,\dots,w'_d)$ if there is $1 \leq i \leq d$ such that $w_i < w'_i$ and $w_j = w'_j$ for all $i < j \leq d$.

For a given choice of constants $C_0,C_1$ and a weight vector $\vec w$, let $P(C_0,C_1,\vec w)$ denote the assertion that Theorem \ref{l3} holds for the given choice of $C_0,C_1$ and for all polynomial systems of weight $\vec w$ relative to some extreme\footnote{The requirement that $\alpha$ be extreme was mistakenly omitted in our previous paper \cite{tz-pattern}; it is needed to force the PET induction to terminate at a linear system.} node $\alpha$.  The key inductive claim is then

\begin{proposition}[PET induction step]\label{induct}  For any $C_0,C_1,\vec w = (w_1,\dots,w_d)$ which is nonlinear in the sense that $w_i \neq 0$ for some $i=2,\dots,d$, there exist a finite collection of triples $(C'_0,C'_1,\vec w')$ with $\vec w' < \vec w$, such that if $P(C'_0,C'_1,\vec w')$ holds for all triples in this collection, then $P(C_0,C_1,\vec w)$ holds.
\end{proposition}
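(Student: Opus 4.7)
The plan is to execute one step of the Bergelson--Leibman PET induction scheme, adapted to handle a pseudorandom (rather than $L^\infty$) majorant. Let $\mathcal S$ be a nonlinear polynomial system with weight $\vec w = \vec w_\alpha(\mathcal S)$ relative to an extreme active node $\alpha$, so that $d(\alpha,\alpha_0)\geq 2$. First I shift $x \mapsto x - R_{\alpha_0}(\vec m,\vec h)$, which is harmless since it preserves both $\Lambda_{\mathcal S}(\vec f)$ and the box norms \eqref{sof}; this allows one to assume $R_{\alpha_0} = 0$, so that the distances $d(\cdot,\alpha_0)$ coincide with the $\m$-degrees of the $R_\gamma$.

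The core step is a van der Corput plus Cauchy--Schwarz manoeuvre. Pick an index $i$ such that $R_\alpha$ has $\m_i$-degree at least $2$ (which exists because $\alpha$ is extreme and $d(\alpha,\alpha_0)\geq 2$), and view the average \eqref{lvs-def} as an outer average over the other variables of $\E_{m_i \in [M_0]} G(m_i)$ for an appropriate vector-valued $G$. A van der Corput inequality bounds the square of this by an average $\E_s \E_{m_i} G(m_i+s)\overline{G(m_i)}$ over a shift $s$ in an interval of length $H \ll M_0$; expanding, and then applying Cauchy--Schwarz in $x$ to peel off the $f_\alpha$ factor (using $|f_\alpha|^2 \ll \nu_{b_\alpha}+1$), yields a new polynomial system $\mathcal S'$ with one extra fine degree of freedom (the variable $s$), at most $2|\mathcal A|-1$ active nodes coming from the ``shadow'' pairs $R_\gamma(\vec m,\vec h), R_\gamma(\vec m+s\vec e_i,\vec h)$ for $\gamma \neq \alpha$, the node $\alpha$ absorbed into the majorant $\nu_{b_\alpha}+1$ as an inactive node, and polynomial coefficients of size at most $A^{C'_1}$ for a $C'_1$ depending only on $C_1$ and $d$.

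The crucial point is the weight-decrement: by the classical PET computation, the equivalence class at the top distance $d(\alpha,\alpha_0)$ is emptied, because the differences between the shifted and unshifted copies, $R_\gamma(\vec m+s\vec e_i,\vec h) - R_\gamma(\vec m,\vec h)$, have strictly lower $\m$-degree than $R_\gamma$. Hence the weight of $\mathcal S'$ relative to an appropriate extreme node is strictly smaller than $\vec w$ in the lexicographic order. The remaining polynomial-system axioms (no two distinct nodes have constant difference, and nontrivial differences from $\alpha_0$ still have $\m$-degree at least $d_0$) are preserved since the invertible substitution $\vec m \mapsto \vec m+s\vec e_i$ does not destroy these. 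Invoking the inductive hypothesis $P(C'_0,C'_1,\vec w')$ on $\mathcal S'$, taking square roots, and then using the Cauchy--Schwarz--Gowers inequality \eqref{csg} together with monotonicity \eqref{mono} to re-express the resulting box norm in the canonical form \eqref{sof} gives the desired bound on $|\Lambda_{\mathcal S}(\vec f)|$.

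The main obstacle will be the careful combinatorial bookkeeping: tracking the coefficient sizes through iterated Cauchy--Schwarz, verifying that the shifts $s\vec e_i$ of size $O(H)$ produce only negligible boundary effects in the averaging window $[M_0]^r$ (where the lower bound $M_0 \geq \log^{1/\kappa} N$ is used to dominate $A^{C_1}$-sized quantities), and invoking the polynomial-forms pseudorandomness of $\nu$ from Proposition \ref{gu} to handle the additional moments introduced by the majorant $\nu_{b_\alpha}+1$ during expansion.
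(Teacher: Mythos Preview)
Your outline captures the correct PET induction shape, but there are two concrete gaps in the execution.

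First, the inequality $|f_\alpha|^2 \ll \nu_{b_\alpha}+1$ is false: the majorant $\nu$ is unbounded (of order $\log N$ on its support), so $\nu^2$ is not dominated by $\nu+1$. This is precisely the obstacle that separates the pseudorandom PET from the bounded one, and it means your ``Cauchy--Schwarz in $x$ to peel off $f_\alpha$'' step does not close as written. The paper circumvents this by a different order of operations: it normalises at the \emph{extreme} node $\alpha$ (setting $R_\alpha=0$, not $R_{\alpha_0}=0$), so that all nodes $\beta$ with $d(\alpha,\beta)=0$ form a set $\mathcal A_0$ whose polynomials are independent of $\vec m$; it then performs a \emph{weighted} Cauchy--Schwarz first, with weight $H_{\vec h}(x)=\prod_{\beta\in\mathcal A_0}(\nu_{b_\beta}+1)(x+R_\beta(0,\vec h))$ and with the polynomial forms condition (Lemma~\ref{st-lemma}) supplying $\E_x H_{\vec h}(x)=2^{|\mathcal A_0|}+o(1)$; only after this does it apply van der Corput, shifting all of $\vec m$ by $\vec h'\in[M]^r$ and doubling the remaining set $\mathcal A_1$ into $\mathcal A_1\cup\mathcal A'_1$. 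No $f_\gamma$ is ever squared. (Incidentally, your choice of a single coordinate $m_i$ with $\m_i$-degree $\geq 2$ need not exist: $R_\alpha=\m_1\m_2$ has total $\m$-degree $2$ but each coordinate degree $1$.)

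Second, your weight-decrement claim (``the equivalence class at the top distance $d(\alpha,\alpha_0)$ is emptied'') is not the mechanism. In the paper's new system $\mathcal S'$, one selects a new extreme node $\tilde\alpha$ among the active nodes of $\mathcal A_1\cup\mathcal A'_1$ at the \emph{minimal} positive distance $d_*$ from $\alpha$ (subject to maximising distance to $\alpha_0$). The decrement then comes from the observation that for each $\beta\in\mathcal A_1$ the difference $R'_\beta-R'_{\beta'}=R_\beta(\vec m+\vec h',\vec h)-R_\beta(\vec m+\vec h'',\vec h)$ has strictly lower $\m$-degree than $R_\beta$, so $\beta$ and $\beta'$ lie in the same equivalence class relative to $\tilde\alpha$; this makes the count of classes at distance exactly $d_*$ drop by one while the counts at distances $>d_*$ are unchanged. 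Nothing is emptied in a single step, and the weight is computed relative to $\tilde\alpha$, not relative to $\alpha$ or $\alpha_0$.
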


Since the number of weight vectors $\vec w$ that can arise from systems ${\mathcal S}$ of at most $C_0$ nodes is finite, and the collection of all weight vectors is well-ordered,  we conclude from Proposition \ref{induct} that if $P(C_0,C_1,\vec w)$ holds for all linear $\vec w$, then it holds for all $\vec w$.  This implies that to prove Theorem \ref{l3}, it suffices to do so in the case when ${\mathcal S}$ is linear.

We now establish Proposition \ref{induct}.  Let ${\mathcal S}$ be a polynomial system with at most $C_0$ nodes and at most $C_0$ fine degrees of freedom, and of weight $\vec w$ relative to some extreme node $\alpha$, and with all polynomials having coefficients bounded in magnitude by $A^{C_1}$.  Since $\vec w$ is nonlinear, the diameter of ${\mathcal S}$ is at least two.  By subtracting $R_\alpha$ from each of the other $R_\beta$ (noting that this does not affect the metric $d$ or the average $\Lambda_{\vec S}(\vec f)$, we may assume that $R_\alpha = 0$ (at the cost of increasing the coefficient bound from $A^{C_1}$ to $2A^{C_1}$).  As $\alpha$ was extreme, we now note that the polynomial $R_{\alpha_0}$ associated to $\alpha_0$ has maximal $\m$-degree amongst all the polynomials associated to active nodes.

We split ${\mathcal A} = {\mathcal A}_0 \cup {\mathcal A}_1$, where ${\mathcal A}_0$ is the set of nodes $\beta$ with $d(\alpha,\beta)=0$, and ${\mathcal A}_1$ is the set of nodes $\beta$ with $d(\alpha,\beta) \geq 1$; note that the distinguished node $\alpha_0$ lies in ${\mathcal A}_1$.  We can then factor
$$ \Lambda_{\vec S}(\vec f) = \E_{\vec h \in [M]^{D_{\mathcal S}}} \E_{x \in \Z/N\Z} F_{\vec h}(x) \E_{\vec m \in [M_0]^r} G_{\vec m, \vec h}(x) $$
where
$$ F_{\vec h}(x) \coloneqq  \prod_{\beta \in {\mathcal A}_0} f_\beta( x + R_\beta( 0, \vec h ) ) $$
and
$$ G_{\vec m, \vec h}(x) \coloneqq  \prod_{\beta \in {\mathcal A}_1} f_\beta( x + R_\beta( \vec m, \vec h ) ).$$
By hypothesis, each $f_\beta$ is bounded in magnitude by $\nu_{b_\beta}+1$ for some $b_\beta \in [W]$ coprime to $W$.  Thus we have the pointwise bound
$$ | F_{\vec h}(x)| \leq H_{\vec h}(x)$$
where
$$ H_{\vec h}(x) \coloneqq  \prod_{\beta \in {\mathcal A}_0} (\nu_{b_\beta}+1)( x + R_\beta( 0, \vec h ) ).$$
 We can pointwise bound $|G_{\vec m,\vec h}(x)| \leq K_{\vec m, \vec h}(x)$ where
$$ K_{\vec m, \vec h}(x) \coloneqq  \prod_{\beta \in {\mathcal A}_1} (\nu_\beta + 1)( x + R_\beta( \vec m, \vec h ) ).$$

In Appendix \ref{polyapp} we will establish the following bounds:

\begin{lemma}\label{st-lemma}  With the notation as above, we have
\begin{equation}\label{st1}
\E_{\vec h \in [M]^{D_{\mathcal S}}} \E_{x \in \Z/N\Z} H_{\vec h}(x)  = 2^{|{\mathcal A}_0|} + o(1).
\end{equation}
and
\begin{equation}\label{st2}
\E_{\vec h \in [M]^{D_{\mathcal S}}} \E_{x \in \Z/N\Z} H_{\vec h}(x) (\E_{\vec m \in [M]^r + \vec a} K_{\vec m, \vec h}(x))^2 = 2^{|{\mathcal A}_0|+2|{\mathcal A}_1|} + o(1)
\end{equation}
uniformly for all $\vec a \in \Z^r$.  
\end{lemma}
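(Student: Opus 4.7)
The plan is to prove both estimates by expanding each factor $(\nu_{b_\beta}+1)$ as $\nu_{b_\beta} + 1$ and reducing everything to a collection of polynomial linear-forms correlations of $\nu$ of the shape already controlled by the pseudorandomness machinery of \cite{gt-primes}, \cite{tz-pattern} (and whose polynomial generalisation underlies Proposition \ref{gu}). Each such correlation should evaluate to $1 + o(1)$; summing over the $2^{|{\mathcal A}_0|}$ (resp.\ $2^{|{\mathcal A}_0|+2|{\mathcal A}_1|}$) terms produced by the expansion then gives the stated constants.

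First I would handle \eqref{st1}. Writing
$$H_{\vec h}(x) = \sum_{S \subseteq {\mathcal A}_0} \prod_{\beta \in S} \nu_{b_\beta}\bigl(x + R_\beta(0,\vec h)\bigr),$$
it suffices to show, for each $S \subseteq {\mathcal A}_0$, that the average of $\prod_{\beta \in S} \nu_{b_\beta}(x + R_\beta(0,\vec h))$ over $x \in \Z/N\Z$ and $\vec h \in [M]^{D_{\mathcal S}}$ equals $1+o(1)$. The underlying polynomial forms $(x,\vec h) \mapsto x + R_\beta(0,\vec h)$ for $\beta \in S$ are pairwise distinct as polynomials (by the axiom that $R_\beta - R_\gamma \notin \Z$ for distinct $\beta,\gamma \in {\mathcal A}$), and each is non-constant in $x$; combined with coefficient bounds $A^{C_1}$ and the hypothesis $M \geq \log^{1/\kappa} N$, this places the correlation in the polynomial forms regime treated in Appendix \ref{polyapp} (where $1/\kappa$ is taken large relative to $C_0$), yielding $1+o(1)$ uniformly.

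Next I would do \eqref{st2} by essentially the same method, but applied to the $(1+2+2)$-parameter family indexed by $(x,\vec h,\vec m,\vec m')$. Expanding both $H_{\vec h}(x)$ and
$$(\E_{\vec m \in [M]^r + \vec a} K_{\vec m,\vec h}(x))^2 = \E_{\vec m,\vec m' \in [M]^r+\vec a} \prod_{\beta \in {\mathcal A}_1} (\nu_{b_\beta}+1)\bigl(x + R_\beta(\vec m,\vec h)\bigr) (\nu_{b_\beta}+1)\bigl(x + R_\beta(\vec m',\vec h)\bigr)$$
gives a sum of $2^{|{\mathcal A}_0| + 2|{\mathcal A}_1|}$ terms, each of which is a polynomial correlation of $\nu$'s at the linear forms $x + R_\beta(0,\vec h)$ (for $\beta$ in a subset of ${\mathcal A}_0$) together with $x + R_\beta(\vec m,\vec h)$ and $x + R_\beta(\vec m',\vec h)$ (for $\beta$ in subsets of ${\mathcal A}_1$). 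Translation-invariance in $\vec m$ lets us replace the shifted box $[M]^r + \vec a$ by $[M]^r$ without changing the asymptotic, after which the correlation is again in the regime of the polynomial forms condition and contributes $1 + o(1)$.

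The main technical obstacle is verifying the non-degeneracy of the resulting family of forms in the $(x,\vec h,\vec m,\vec m')$ variables, which is what allows the polynomial forms condition to be invoked. Pairwise distinctness within a single group (the ${\mathcal A}_0$ block, or each copy of an ${\mathcal A}_1$ block) follows from the axiom $R_\beta - R_\gamma \notin \Z$ for distinct nodes. A form associated to $\beta \in {\mathcal A}_1$ in the $\vec m$-copy cannot equal a form from the ${\mathcal A}_0$ block, because $R_\beta$ for $\beta \in {\mathcal A}_1$ is non-constant in $\vec m$ by definition of ${\mathcal A}_0$ versus ${\mathcal A}_1$ (recall $\mathcal{A}_0 = \{\beta : d(\alpha,\beta) = 0\}$ while $\mathcal{A}_1$ consists of nodes at positive $\vec m$-distance from $\alpha$). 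And a form in the $\vec m$-copy cannot equal one in the $\vec m'$-copy since they live in disjoint variables. Once this non-degeneracy is established, the estimates \eqref{st1}--\eqref{st2} follow immediately from the pseudorandomness of $\nu$, in the same manner as the polynomial forms bounds used in \cite[\S 6]{tz-pattern} and Proposition \ref{gu} above.
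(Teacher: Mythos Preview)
Your proposal is correct and follows essentially the same route as the paper: expand each $(\nu_{b_\beta}+1)$ factor, obtain $2^{|\mathcal{A}_0|}$ (resp.\ $2^{|\mathcal{A}_0|+2|\mathcal{A}_1|}$) terms, and show each is $1+o(1)$ via the polynomial forms condition (Theorem~\ref{poly}), after checking that the relevant differences $P_i - P_j$ are non-constant using the axioms on $R_\alpha - R_\beta$. The only cosmetic difference is in handling the shift $\vec a$: you invoke translation-invariance to reduce to $[M]^r$, whereas the paper instead absorbs $\vec a$ into the polynomials and notes that Theorem~\ref{poly} must then be applied with possibly large constant coefficients (which is why that theorem is stated with a bound only on the \emph{non-constant} coefficients).
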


By \eqref{st1} and the Cauchy-Schwarz inequality, we see that to show \eqref{lsf}, it suffices to show that
\begin{equation}\label{pre-vdc}
 \E_{\vec h \in [M]^{D_{\mathcal S}}} \E_{x \in \Z/N\Z} H_{\vec h}(x) (\E_{\vec m \in [M_0]^r} G_{\vec m, \vec h}(x))^2 \ll \|f_{\alpha_0}\|^{2c} + o(1).
\end{equation}

We now apply the van der Corput method.  We can pointwise bound $|G_{\vec m,\vec h}(x)| \leq K_{\vec m, \vec h}(x)$ where
$$ K_{\vec m, \vec h}(x) \coloneqq  \prod_{\beta \in {\mathcal A}_1} (\nu_\beta + 1)( x + R_\beta( \vec m, \vec h ) ).$$
By covering the boundary of $[M_0]^r$ by about $O_r(A^{r-1})$ translates of $[M]^r$, we see that there is a collection $\Sigma$ of $O_r(A^{r-1})$ elements $\vec a$ of $\Z^r$ such that
$$ \left|\E_{\vec m \in [M_0]^r + [M]^r} G_{\vec m, \vec h}(x) - \E_{\vec m \in [M_0]^r} G_{\vec m, \vec h}(x)\right| \ll_r A^{-r} \sum_{\vec a \in \Sigma} \E_{\vec m \in [M]^r + \vec a} K_{\vec m, \vec h}(x) $$
and hence by \eqref{st2} and the triangle inequality (and noting that $A^{-r} \times A^{r-1} = o(1)$)
$$ \E_{\vec h \in [M]^{D_{\mathcal S}}} \E_{x \in \Z/N\Z} H_{\vec h}(x) \left(\E_{\vec m \in [M_0]^r + [M]^r} G_{\vec m, \vec h}(x) - \E_{\vec m \in [M_0]^r} G_{\vec m, \vec h}(x)\right)^2 = o(1)$$
for any $h_1 \in [M]^r$.  By the triangle inequality, to prove \eqref{pre-vdc} it suffices to show that
$$
 \E_{\vec h \in [M]^{D_{\mathcal S}}} \E_{x \in \Z/N\Z} H_{\vec h}(x) (\E_{\vec m \in [M_0]^r + [M]^r} G_{\vec m, \vec h}(x))^2 \ll \|f_{\alpha_0}\|^{2c} + o(1).
$$
By Cauchy-Schwarz, we can bound the left-hand side by
$$
 \E_{\vec h \in [M]^{D_{\mathcal S}}} \E_{x \in \Z/N\Z} H_{\vec h}(x) \E_{\vec m \in [M_0]^r} (\E_{\vec h' \in [M]^r} G_{\vec m + \vec h', \vec h}(x))^2
$$
which we may expand as
$$
 \E_{\vec h \in [M]^{D_{\mathcal S}}} \E_{h', h'' \in [M]^r} \E_{x \in \Z/N\Z} \E_{\vec m \in [M_0]^r}  H_{\vec h}(x) G_{\vec m + \vec h', \vec h}(x) G_{\vec m+\vec h'',\vec h}(x).
$$
Comparing this with \eqref{lvs-def}, we see that this expression can be written as
$$ \Lambda_{{\mathcal S}'}(\vec f')$$
where the polynomial system ${\mathcal S}'$ and the realization $\vec f' = (f'_\beta)_{\beta \in {\mathcal A}'}$ are defined as follows.

\begin{itemize}
\item The number of fine degrees of freedom is $D_{{\mathcal S}'} \coloneqq  D_{\mathcal S} + 2r$;
\item The set of nodes ${\mathcal A}'$ consists of the disjoint union of ${\mathcal A}_0$, ${\mathcal A}_1$, and another copy ${\mathcal A}'_1$ of ${\mathcal A}_1$;
\item The polynomials
$$R'_\beta \in \Z[\m_1,\dots,\m_r.\h_1,\dots,\h_{D_{\mathcal S}}, \h'_1,\dots,\h'_r,\h''_1,\dots,\h''_r] = \Z[ \vec \m, \vec \h, \vec \h', \vec \h'']$$
for $\beta \in {\mathcal A}' = {\mathcal A}_0 \cup {\mathcal A}_1 \cup {\mathcal A}'_1$ are defined by setting
$$ R'_\beta \coloneqq  R_\beta(0, \vec \h)$$
for $\beta \in {\mathcal A}_0$,
$$ R'_\beta \coloneqq  R_\beta(\vec \m + \vec \h', \vec \h)$$
for $\beta \in {\mathcal A}_1$, and
$$ R'_{\beta'} \coloneqq  R_\beta(\vec \m + \vec \h'', \vec \h)$$
for $\beta' \in {\mathcal A}'_1$ the copy of an element $\beta \in {\mathcal A}_1$.
\item The distinguished node stays at $\alpha_0$.
\item The inactive nodes consist of all the nodes in ${\mathcal A}_0$, together with all the previously inactive nodes $\beta$ of ${\mathcal A}_1$, as well as their copies $\beta'$ in ${\mathcal A}'_1$.
\item The realisations $f'_\beta$ for $\beta \in {\mathcal A}' = {\mathcal A}_0 \cup {\mathcal A}_1 \cup {\mathcal A}'_1$ are defined by setting
$$ f'_\beta \coloneqq  \nu_{b_\beta} + 1$$
for $\beta \in {\mathcal A}_0$, and
$$ f'_\beta = f'_{\beta'} = f_\beta$$
for $\beta \in {\mathcal A}_1$, where $\beta'$ is the copy of $\beta$ in ${\mathcal A}'_1$.
\end{itemize}

It is a routine matter to check that ${\mathcal S}'$ obeys the axioms required for a polynomial system; its number of nodes and fine degrees of freedom are bounded by a constant $C'_0$ depending only on $C_0, r$, and the polynomials have coefficients bounded by $A^{C'_1}$ for some $C'_1$ depending only on $C_1$.  Similarly one verifies that $\vec f'$ is indeed a realization of ${\mathcal S}'$.

Let $d_*$ be the minimal distance of an active node of ${\mathcal A}_1 \cup {\mathcal A}'_1$ to $\alpha$ (or equivalently, the minimal $\m$-degree of $R'_\beta$ as $\beta$ ranges over active nodes in ${\mathcal A}_1 \cup {\mathcal A}'_1$); this is well-defined since $\alpha_0 \in {\mathcal A}_1$ is active.  Amongst all the active nodes in ${\mathcal A}_1 \cup {\mathcal A}'_1$ at distance $d_*$ from $\alpha$, let $\tilde \alpha$ be a node that maximizes its distance from $\alpha_0$.  From the ultratriangle inequality (and the fact that $\alpha_0$ is already at a maximal distance from $\alpha$ amongst all active nodes) we see that any active node in ${\mathcal A}_1 \cup {\mathcal A}'_1$ that is further than distance $d_*$ from $\alpha$ will be no further from $\alpha_0$ than $\tilde \alpha$.  Thus $\tilde \alpha$ is an extreme node in ${\mathcal S}'$.  

Observe that if $\beta$ is a node in ${\mathcal A}_1$ and $\beta'$ its copy in ${\mathcal A}'_1$, then $R'_\beta - R'_{\beta'}$ has $\m$-degree strictly less than that of $R'_\beta$, thus the distance between $\beta$ and $\beta'$ is less than that between $\beta$ and $\alpha$.  From this and the ultratriangle inequality we see that for $i > d_*$, the number of equivalence classes in ${\mathcal S}'$ relative to $\tilde \alpha$ at distance $i$ is equal to the number of equivalence classes in ${\mathcal S}$ relative to $\alpha$, while for $i = d_*$, ${\mathcal S}'$ has one fewer equivalence class relative to $\tilde \alpha$ at distance $d_*$ than ${\mathcal S}$ relative to $\alpha$.  Thus the weight vector $\vec w'$ of ${\mathcal S}'$ relative to $\tilde \alpha$ is less than the weight vector $\vec w$ of ${\mathcal S}$ relative to $\alpha$.  Furthermore, given the bounds on the number of nodes and fine degrees of freedom, the weight vector $\vec w'$ ranges in a finite set that depends on $d,r,C_0,C_1,\vec w$.  Applying the hypothesis $P( C_0, C_1, \vec w')$, we obtain the claim.

\subsection{Parallelopipedization}

It remains to establish the linear case of Theorem \ref{l3}.  As in \cite{tz-pattern}, the next step is ``parallelopipedization'', in which one repeatedly uses the Cauchy-Schwarz inequality to reduce matters to controlling a weighted averaged local Gowers norm.

Let the notation be as in Theorem \ref{l3}, with ${\mathcal S}$ linear.  We abbreviate $(\h_1,\dots,\h_{D_{\mathcal S}})$ as $\vec h$.  By subtracting $R_{\alpha_0}$ from all of the other polynomials $R_\alpha$, we may assume that $R_{\alpha_0}=0$, so that all the active $R_\alpha$ have degree at most one in $\m$.  We write ${\mathcal A}_l$ for those nodes at distance one from $\alpha_0$, and ${\mathcal A}_{nl}$ for all nodes at distance greater than one, thus ${\mathcal A}$ is partitioned into $\{\alpha_0\}$, ${\mathcal A}_l$, ${\mathcal A}_{nl}$, with ${\mathcal A}_{nl}$ consisting entirely of inactive nodes, so that for each $\alpha \in {\mathcal A}_{nl}$, $f_\alpha$ is equal either to $1$ or to $\nu_{b_\alpha}$ for some $b_\alpha \in [W]$ coprime to $W$.  By deleting all nodes with $f_\alpha=1$ we can assume that only the latter case $f_\alpha = \nu_{b_\alpha}$ occurs for $\alpha \in {\mathcal A}_{nl}$.

For each $\alpha \in {\mathcal A}_l$ one has
$$ R_\alpha = \vec a_\alpha \cdot \vec \m + c_\alpha$$
for some non-zero $\vec a_\alpha \in \Z[\vec h]^r$ and some $c_\alpha \in \Z[\vec h]$, with $\cdot$ denoting the usual dot product; furthermore from the axioms of a linear system we see that the $\vec a_\alpha$ are distinct as $\alpha$ varies.  We can then write $\Lambda_{\vec S}(\vec f)$ as
$$\E_{\vec h \in [M]^{D_{\mathcal S}}} \E_{\vec m \in [M_0]^r} \E_{x \in \Z/N\Z} f_{\alpha_0}(x) \left(\prod_{\alpha \in {\mathcal A}_{nl}} \nu_{b_\alpha}(x + R_\alpha(\vec m, \vec h))\right) \left(\prod_{\alpha \in {\mathcal A}_l} f_\alpha( x + \vec a_\alpha(\vec h) \cdot \vec \m + c_\alpha(\vec h) \right).$$
We need to show that this expression is $o(1)$.  Arguing as in the previous section, we may replace the set $[M_0]^r$ that $\vec m$ is being averaged over by the multiset
$$ [M_0]^r + \sum_{\alpha \in {\mathcal A}_l} [M]^r $$
(that is to say, the sum of $[M_0]^r$ and $|{\mathcal A}_l|$ copies of $[M]^r$, counting multiplicity.  Thus it suffices to show that the expression
\begin{equation}\label{exd}
\begin{split}
&\E_{\vec h \in [M]^{D_{\mathcal S}}} \E_{\vec m \in [M_0]^r} \E_{\vec k_\alpha \in [M]^r \forall \alpha \in {\mathcal A}_l} \E_{x \in \Z/N\Z} f_{\alpha_0}(x) \left(\prod_{\alpha \in {\mathcal A}_{nl}} \nu_{b_\alpha}\left(x + R_\alpha(\vec m + \sum_{\beta \in {\mathcal A}_l} \vec k_\beta, \vec h\right)\right)\\
&\quad \left (\prod_{\alpha \in {\mathcal A}_l} f_\alpha\left( x + \vec a_\alpha(\vec h) \cdot \vec \m + \sum_{\beta \in {\mathcal A}_l} \vec a_\alpha(\vec h) \cdot \vec k_\beta + c_\alpha(\vec h) \right) \right)
\end{split}
\end{equation}
is $O( \|f_{\alpha_0}\|^c + o(1))$.  We shift $x$ by $-\sum_{\beta \in {\mathcal A}_l} \vec a_\beta(\vec h) \cdot \vec k_\beta$ to write this expression as
$$\E_{\vec h \in [M]^{D_{\mathcal S}}} \E_{\vec m \in [M_0]^r} \E_{\vec k_\alpha \in [M]^r \forall \alpha \in {\mathcal A}_l} \E_{x \in \Z/N\Z} f_{\alpha_0,\vec m, \vec h, \vec k}(x) \prod_{\alpha \in {\mathcal A}_l} f_{\alpha,\vec m, \vec h, \vec k}(x) $$
where $\vec k \coloneqq  (\vec k_\alpha)_{\alpha \in {\mathcal A}_l}$, with
$$  f_{\alpha_0,\vec m, \vec h, \vec k}(x) \coloneqq  f_{\alpha_0}\left(x - \sum_{\beta \in {\mathcal A}_l} \vec a_\beta(\vec h) \cdot \vec k_\beta\right)
\prod_{\alpha \in {\mathcal A}_{nl}} \nu_{b_\alpha}\left(x + R_\alpha\left(\vec m + \sum_{\beta \in {\mathcal A}_l} \vec m_\beta, \vec h\right)-\sum_{\beta \in {\mathcal A}_l} \vec a_\beta(\vec h) \cdot \vec k_\beta\right)$$
and
$$ f_{\alpha,\vec m, \vec h, \vec k}(x) \coloneqq  f_\alpha\left( x + \vec a_\alpha(\vec h) \cdot \vec \m + \sum_{\beta \in {\mathcal A}_l} (\vec a_\alpha(\vec h) - \vec a_\beta(\vec h)) \cdot \vec k_\beta + c_\alpha(\vec h) \right)
$$
for $\alpha \in {\mathcal A}_l$.  The key point here is that $f_{\alpha,\vec m, \vec h, \vec k}$ does not depend on the $\alpha$ component $\vec k_\alpha$ of $\vec k$.  We also have the pointwise bound
$$ |f_{\alpha,\vec m, \vec h, \vec k}(x)| \leq \nu_{\alpha,\vec m, \vec h, \vec k}(x)$$
for all $\alpha \in {\mathcal A}_l$, where $\nu_{\alpha,\vec m, \vec h, \vec k}(x)$ is either identically equal to $1$, or is given by the formula
$$ \nu_{\alpha,\vec m, \vec h, \vec k}(x) = \nu_{b_\alpha}\left( x + \vec a_\alpha(\vec h) \cdot \vec \m + \sum_{\beta \in {\mathcal A}_l} (\vec a_\alpha(\vec h) - \vec a_\beta(\vec h)) \cdot \vec k_\beta + c_\alpha(\vec h) \right).
$$
For sake of exposition we will assume that the latter holds for all $\alpha$, as this is the most difficult case.  As with $f_{\alpha,\vec m, \vec h, \vec k}$, the quantity $\nu_{\alpha,\vec m,\vec h, \vec k}(x)$ does not depend on the $\alpha$ component of $\vec k$.

Applying the weighted Cauchy-Schwarz-Gowers inequality (see \cite[Proposition A.2]{tz-pattern} or \cite[Corollary B.4]{gt-linear}), we can thus bound upper bound the absolute value of \eqref{exd} by
$$ \E_{\vec h \in [M]^{D_{\mathcal S}}} \E_{\vec m \in [M_0]^r} \E_{x \in \Z/N\Z} \| f_{\alpha_0,\vec m, \vec h, \cdot}(x) \|_{\Box^{{\mathcal A}_l}(\nu)} \prod_{\alpha \in {\mathcal A}_l} \| \nu_{\alpha,\vec m, \vec h, \cdot}(x) \|_{\Box^{{\mathcal A}_l \backslash \alpha}}^{1/2} $$
where
\begin{align*}
 \| f_{\alpha_0,\vec m, \vec h, \cdot}(x) \|_{\Box^{{\mathcal A}_l}(\nu)}^{2^{|{\mathcal A}_l|}}
&\coloneqq  \E_{\vec k^{(0)}, \vec k^{(1)} \in ([M]^r)^{{\mathcal A}_l}} \left[ \prod_{\omega \in \{0,1\}^{{\mathcal A}_l}} f_{\alpha_0,\vec m, \vec h, \vec k^{(\omega)}}(x) \right] \\
&\quad \times \prod_{\alpha \in {\mathcal A}_l} \prod_{\omega \in \{0,1\}^{{\mathcal A}_l \backslash \{\alpha\}}} \nu_{\alpha, \vec m, \vec h, \vec k^{(\omega)}}(x)
\end{align*}
and
\begin{align*}
\| \nu_{\alpha,\vec m, \vec h, \cdot}(x) \|_{\Box^{{\mathcal A}_l \backslash \alpha}}^{2^{|{\mathcal A}_l|-1}}
&\coloneqq  \E_{\vec k^{(0)}, \vec k^{(1)} \in ([M]^r)^{{\mathcal A}_l \backslash \{\alpha\}}}
\prod_{\omega \in \{0,1\}^{{\mathcal A}_l \backslash \{\alpha\}}}  \nu_{\alpha,\vec m, \vec h, \vec k^{(\omega)}}(x), 
\end{align*}
where for $\vec k^{(0)} = (\vec k^{(0)}_\beta)_{\beta \in {\mathcal A}_l}$, $\vec k^{(1)} = (\vec k^{(1)}_\beta)_{\beta \in {\mathcal A}_l}$, and $\omega = (\omega_\beta)_{\beta \in {\mathcal A}_l}$ one has
$$ \vec k^{(\omega)} \coloneqq  (\vec k^{(\omega_\beta)}_\beta)_{\beta \in {\mathcal A}_l};$$
if $\vec k^{(0)}, \vec k^{(1)}$ is only given in $([M]^r)^{{\mathcal A}_l \backslash \{\alpha\}}$, we extend it arbitrarily to $([M]^r)^{{\mathcal A}_l}$ for the purposes of defining $\vec k^{(\omega)}$, and similarly if $\omega$ is only given in $\{0,1\}^{{\mathcal A}_l \backslash \{\alpha\}}$ instead of $\{0,1\}^{{\mathcal A}_l}$.  This leaves the $\alpha$ component of $\vec k^{(\omega)}$ undefined, but this is irrelevant for the purposes of evaluating $\nu_{\alpha, \vec m, \vec h, \vec k^{(\omega)}}(x)$ because (as mentioned previously) this quantity does not depend on the $\alpha$ component of $\vec k^{(\omega)}$.

In Appendix \ref{polyapp} we will establish the following estimate:

\begin{lemma}\label{gopw}  With the notation as above, we have
\begin{equation}\label{gopw-eq}
\E_{\vec h \in [M]^{D_{\mathcal S}}} \E_{\vec m \in [M_0]^r} \E_{x \in \Z/N\Z} \| \nu_{\alpha,\vec m, \vec h, \cdot}(x) \|_{\Box^{{\mathcal A}_l \backslash \alpha}}^{2^{|{\mathcal A}_l|-1}} = 1 + o(1).
\end{equation}
\end{lemma}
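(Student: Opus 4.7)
\textbf{Proof plan for Lemma \ref{gopw}.}  The plan is to expand the Gowers box norm and reduce the statement to a polynomial linear forms condition on the pseudorandom majorant $\nu$, of the same type that underlies Proposition \ref{gu}.

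First, I would write out the norm directly from its definition:
$$\|\nu_{\alpha,\vec m, \vec h, \cdot}(x)\|_{\Box^{{\mathcal A}_l \backslash \alpha}}^{2^{|{\mathcal A}_l|-1}} = \E_{\vec k^{(0)}, \vec k^{(1)} \in ([M]^r)^{{\mathcal A}_l \backslash \{\alpha\}}} \prod_{\omega \in \{0,1\}^{{\mathcal A}_l \backslash \{\alpha\}}} \nu_{\alpha, \vec m, \vec h, \vec k^{(\omega)}}(x).$$
Each factor is $\nu_{b_\alpha}$ evaluated at the common shift $x + \vec a_\alpha(\vec h) \cdot \vec m + c_\alpha(\vec h)$ plus the $\omega$-dependent term $\sum_{\beta \in {\mathcal A}_l \backslash \{\alpha\}} \psi_\beta(\vec h) \cdot \vec k^{(\omega_\beta)}_\beta$, where $\psi_\beta(\vec h) \coloneqq \vec a_\alpha(\vec h) - \vec a_\beta(\vec h) \in \Z[\vec h]^r$. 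Since the common shift is independent of $\omega$ and $x$ is averaged over all of $\Z/N\Z$, translating $x$ eliminates the $\vec m$, $c_\alpha$ dependence entirely, and the target estimate becomes
$$\E_{\vec h \in [M]^{D_{\mathcal S}}} \E_{\vec k^{(0)}, \vec k^{(1)}} \E_{x \in \Z/N\Z} \prod_{\omega} \nu_{b_\alpha}\!\left(x + \sum_{\beta \neq \alpha} \psi_\beta(\vec h) \cdot \vec k^{(\omega_\beta)}_\beta\right) = 1 + o(1).$$

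Next I would verify that the $2^{|{\mathcal A}_l|-1}$ linear forms in the $2r|{\mathcal A}_l \backslash \{\alpha\}|$ variables $(\vec k^{(0)}_\beta, \vec k^{(1)}_\beta)_{\beta \neq \alpha}$ are of finite complexity for a typical $\vec h$. The linear-system axiom forces the vectors $\vec a_\alpha$ and $\vec a_\beta$ to be distinct as polynomials in $\vec h$ for each $\beta \in {\mathcal A}_l$, $\beta \neq \alpha$, so each $\psi_\beta$ is a nonzero polynomial and hence $\psi_\beta(\vec h) \neq 0$ outside a proper subvariety; moreover the forms indexed by distinct $\omega$ are genuinely distinct because the variables $\vec k^{(0)}_\beta$ and $\vec k^{(1)}_\beta$ are independent. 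For such generic $\vec h$, the desired inner average equals $1 + o(1)$ by the polynomial linear forms condition for $\nu$ established in Appendix \ref{polyapp} (the same apparatus that yields Proposition \ref{gu}).

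The main obstacle will be controlling the contribution of the ``bad'' set of $\vec h \in [M]^{D_{\mathcal S}}$ where the vectors $\psi_\beta(\vec h)$ degenerate in a way that produces collisions among the linear forms. Since the polynomials $\psi_\beta$ have degree $O(1)$ and coefficients $O(A^{O(1)})$ with $A = W^{1/\kappa}$, the polynomial equidistribution estimate of Lemma \ref{pop} (or an elementary application of the Schwartz--Zippel lemma) shows that this bad set has density $o(1)$ in $[M]^{D_{\mathcal S}}$, using $M \geq \log^{1/\kappa} N$. On this bad set I would bound the inner average crudely by a constant, using a baseline pseudorandomness bound for $\nu$ against products of $2^{|{\mathcal A}_l|-1}$ translates (which follows from the standard Green--Tao linear forms estimates). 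Combining the main term from good $\vec h$ with the negligible bad contribution yields the claimed asymptotic $1 + o(1)$.
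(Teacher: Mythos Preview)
Your expansion of the box norm and the translation in $x$ to remove the common shift $\vec a_\alpha(\vec h)\cdot\vec m + c_\alpha(\vec h)$ are correct, but the plan breaks down at the point where you fix $\vec h$ and try to apply the polynomial forms condition to the inner average over $x$ and the $\vec k$-variables. After freezing $\vec h\in[M]^{D_{\mathcal S}}$, the linear forms in $\vec k^{(0)},\vec k^{(1)}$ have coefficients $\psi_\beta(\vec h)=\vec a_\alpha(\vec h)-\vec a_\beta(\vec h)$, which are integers of size up to $A^{O(1)}M^{d-1}$; since $M$ may be as large as $N^{1/d}$, these are far outside the $A^{C}$ coefficient bound required by the polynomial forms condition (Theorem~\ref{poly}). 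That hypothesis is not cosmetic: the treatment of the local densities and of the ``terrible'' primes in the proof of Theorem~\ref{poly} genuinely uses that the non-constant coefficients are at most a fixed power of $W$. So the inner $1+o(1)$ you want for generic $\vec h$ is not delivered by the apparatus you cite.

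There is also a problem with the bad-$\vec h$ argument: when some $\psi_\beta(\vec h)$ vanishes, distinct $\omega$'s give coincident arguments and the inner product contains factors of $\nu_{b_\alpha}^{2}$ (or higher powers), whose average over $x$ is of order $\log R$, not $O(1)$. So the crude ``bounded by a constant'' estimate on the bad set is false as stated.

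The paper's proof sidesteps both issues by not freezing $\vec h$ at all. It applies Theorem~\ref{poly} \emph{once}, with all of $\vec h,\vec m,\vec k^{(0)},\vec k^{(1)}$ treated as averaging variables (so $s=D_{\mathcal S}+r+2r(|{\mathcal A}_l|-1)$ and $k=2^{|{\mathcal A}_l|-1}$). Viewed as polynomials in this full set of variables, the forms have coefficients bounded by $A^{C_1}$ (inherited from the system ${\mathcal S}$), and the non-constancy of the differences follows directly from the axiom that the $\vec a_\beta$ are distinct elements of $\Z[\vec h]^r$. No case analysis on $\vec h$ is needed, and no degenerate-$\vec h$ set arises.
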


Thus, by H\"older's inequality (and modifying $c$ as necessary), to show that the expression in \eqref{exd} is $o(1)$, it suffices to establish the bound
\begin{equation}\label{scl}
 \E_{\vec h \in [M]^{D_{\mathcal S}}} \E_{\vec m \in [M_0]^r} \E_{x \in \Z/N\Z} \| f_{\alpha_0,\vec m, \vec h, \cdot}(x) \|_{\Box^{{\mathcal A}_l}(\nu)}^{2^{|{\mathcal A}_l|}} \ll \| f_{\alpha_0} \|^c + o(1).
\end{equation}

This is a weighted version of Theorem \ref{l1}, and will be deduced from that theorem by one final application of \eqref{scl}, to which we now turn.

\subsection{Final Cauchy-Schwarz}

By definition, the left-hand side of \eqref{scl} expands as
$$
 \E_{\vec h \in [M]^{D_{\mathcal S}}} \E_{x \in \Z/N\Z} \E_{\vec k^{(0)}, \vec k^{(1)} \in ([M]^r)^{{\mathcal A}_l}} w(\vec h, \vec k^{(0)}, \vec k^{(1)}, x) \prod_{\omega \in \{0,1\}^{{\mathcal A}_l}} 
f_{\alpha_0}(x - \sum_{\beta \in {\mathcal A}_l} \vec a_\beta(\vec h) \cdot \vec k^{(\omega_\beta)}_\beta) $$
where\footnote{In the analogous expansion in \cite[\S 5.19]{tz-pattern}, the terms arising from $\alpha \in {\mathcal A}_l$ were mistakenly omitted.}
\begin{align*}
 &w(\vec h, \vec k^{(0)}, \vec k^{(1)}, x) \coloneqq 
 \E_{\vec m \in [M_0]^r}  
\left(\prod_{\alpha \in {\mathcal A}_{nl}} \prod_{\omega \in \{0,1\}^{{\mathcal A}_l}}  \nu_{b_\alpha}(x + R_\alpha(\vec m + \sum_{\beta \in {\mathcal A}_l} \vec k^{(\omega_\beta)}_\beta, \vec h))-\sum_{\beta \in {\mathcal A}_l} \vec a_\beta(\vec h) \cdot \vec k_\beta) \right)\\
& \times \left(\prod_{\alpha \in {\mathcal A}_l} \prod_{\omega \in \{0,1\}^{{\mathcal A}_l \backslash \{\alpha\}}} \nu_{\alpha, \vec m, \vec h, \vec k^{(\omega)}}(x)\right).
\end{align*}
On the other hand, if we identify ${\mathcal A}_l$ with $\{1,\dots,D\}$, then $D \leq C_0$ and the expression
$$
 \E_{\vec h \in [M]^{D_{\mathcal S}}} \E_{x \in \Z/N\Z} \E_{\vec k^{(0)}, \vec k^{(1)} \in ([M]^r)^{{\mathcal A}_l}} w(\vec h, \vec k^{(0)}, \vec k^{(1)}, x) \prod_{\omega \in \{0,1\}^{{\mathcal A}_l}} 
f_{\alpha_0}\left(x - \sum_{\beta \in {\mathcal A}_l} \vec a_\beta(\vec h) \cdot \vec k^{(\omega_\beta)}_\beta\right) $$
can be bounded in magnitude using the Cauchy-Schwarz-Gowers inequality \eqref{csg} by
$$  \E_{\vec h \in [M]^{D_{\mathcal S}}} \| f_{\alpha_0} \|_{\Box^{rD}_{(a_{ij}(\vec h) [M])_{1 \leq i \leq D; 1 \leq j \leq r}}}^{2^{rD}} $$
where $a_{i1},\dots,a_{ir} \in \Z[\vec \h]$ are the components of $a_i$.  By H\"older's inequality, this is bounded by
$$  (\E_{\vec h \in [M]^{D_{\mathcal S}}} \| f_{\alpha_0} \|_{\Box^{rD}_{(a_{ij}(\vec h) [M])_{1 \leq i \leq D; 1 \leq j \leq r}}})^{2^{rD}} $$
which is an expression of the form $\|f_{\alpha} \|^{2^{rD}}$.  Thus, by the triangle inequality, it suffices to show that
$$
 \E_{\vec h \in [M]^{D_{\mathcal S}}} \E_{x \in \Z/N\Z} \E_{\vec k^{(0)}, \vec k^{(1)} \in ([M]^r)^{{\mathcal A}_l}} (w(\vec h, \vec k^{(0)}, \vec k^{(1)}, x)-1) \prod_{\omega \in \{0,1\}^{{\mathcal A}_l}} 
f_{\alpha_0}\left(x - \sum_{\beta \in {\mathcal A}_l} \vec a_\beta(\vec h) \cdot \vec k^{(\omega_\beta)}_\beta\right)= o(1).$$
 Bounding $f_{\alpha_0}$ in magnitude by $\nu_{b_{\alpha_0}}+1$ and using Cauchy-Schwarz, it suffices to show that
\begin{align*}
& \E_{\vec h \in [M]^{D_{\mathcal S}}} \E_{x \in \Z/N\Z} \E_{\vec k^{(0)}, \vec k^{(1)} \in ([M]^r)^{{\mathcal A}_l}} (w(\vec h, \vec k^{(0)}, \vec k^{(1)}, x)-1)^2 \prod_{\omega \in \{0,1\}^{{\mathcal A}_l}} \\
&\left(\nu_{b_{\alpha_0}}(x - \sum_{\beta \in {\mathcal A}_l} \vec a_\beta(\vec h) \cdot \vec k^{(\omega_\beta)}_\beta)+1\right)= o(1).
\end{align*}
Expanding out the square, it suffices to show that
\begin{align*}
& \E_{\vec h \in [M]^{D_{\mathcal S}}} \E_{x \in \Z/N\Z} \E_{\vec k^{(0)}, \vec k^{(1)} \in ([M]^r)^{{\mathcal A}_l}} w(\vec h, \vec k^{(0)}, \vec k^{(1)}, x)^j \prod_{\omega \in \{0,1\}^{{\mathcal A}_l}} \\
&\left(\nu_{b_{\alpha_0}}(x - \sum_{\beta \in {\mathcal A}_l} \vec a_\beta(\vec h) \cdot \vec k^{(\omega_\beta)}_\beta)+1\right)= 2^{|{\mathcal A}_l|} + o(1)
\end{align*}
for $j=0,1,2$.

We just treat the most difficult case $j=2$, as the other cases $j=0,1$ are similar (with fewer $\nu$-type factors).  Expanding out the second product, it suffices to show that
$$
 \E_{\vec h \in [M]^{D_{\mathcal S}}} \E_{x \in \Z/N\Z} \E_{\vec k^{(0)}, \vec k^{(1)} \in ([M]^r)^{{\mathcal A}_l}} w(\vec h, \vec k^{(0)}, \vec k^{(1)}, x)^2 \prod_{\omega \in \Omega} 
\nu_{b_{\alpha_0}}(x - \sum_{\beta \in {\mathcal A}_l} \vec a_\beta(\vec h) \cdot \vec k^{(\omega_\beta)}_\beta)= 1 + o(1)$$
for any $\Omega \subset \{0,1\}^{{\mathcal A}_l}$.  The left-hand side may be expanded as
\begin{equation}\label{task}
\begin{split}
 &\E_{\vec h \in [M]^{D_{\mathcal S}}} \E_{x \in \Z/N\Z} \E_{\vec k^{(0)}, \vec k^{(1)} \in ([M]^r)^{{\mathcal A}_l}} 
 \E_{\vec m^{(0)}, \vec m^{(1)} \in [M_0]^r}  \prod_{i=0}^1 \\
&\quad \left(\prod_{\alpha \in {\mathcal A}_{nl}} \prod_{\omega \in \{0,1\}^{{\mathcal A}_l}} \nu_{b_\alpha}(x + R_\alpha(\vec m^{(i)} + \sum_{\beta \in {\mathcal A}_l} \vec m^{(i)}_\beta, \vec h)-\sum_{\beta \in {\mathcal A}_l} \vec a_\beta(\vec h) \cdot \vec k_\beta) \right)\\
& \quad \times \prod_{\alpha \in {\mathcal A}_l} \prod_{\omega \in \{0,1\}^{{\mathcal A}_l \backslash \{\alpha\}}} 
\nu_{b_\alpha}( x + \vec a_\alpha(\vec h) \cdot \vec m^{(i)} + \sum_{\beta \in {\mathcal A}_l} (\vec a_\alpha(\vec h) - \vec a_\beta(\vec h)) \cdot \vec k^{(\omega_\beta)}_\beta + c_\alpha(\vec h) ).
\end{split}
\end{equation}
But this is $1+o(1)$ thanks to the polynomial forms property of the measures $\nu_b$ (see Appendix \ref{polyapp}).  This completes the proof of Theorem \ref{l3} and hence Theorem \ref{l2}.

\section{The $W$-trick}\label{w-trick}

Theorem \ref{l2} has a particularly pleasant consequence in the setting where all polynomials involved are distinct to top order.

\begin{corollary}\label{l2-cor}  Let $d,r,D, W, A, N$ be as in previous sections.  For each $i=1,\dots,D$, let $b_i \in [W]$ be coprime to $W$.   For each $i=1,\dots,d$, let $R_i \in \Z[\m_1,\dots,\m_r]$ be a polynomial of degree at most $d$, with all coefficients bounded in magnitude by $A$.  Assume also that for $1 \leq i < j \leq D$, $R_i - R_j$ has degree exactly $d$.  Let $M_0$ be a quantity with $A^{-1} N^{1/d} \leq M_0 \leq A N^{1/d}$.  Then
$$ \E_{x \in \Z/N\Z} \E_{\vec m \in [M_0]^r} \prod_{i=1}^D \Lambda'_{b_i,W}( x + R_i( \vec m ) ) = 1 + o(1).$$
\end{corollary}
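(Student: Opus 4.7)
The plan is to peel off the factors $\Lambda'_{b_i,W}$ one at a time via a telescoping expansion. Writing
$$ \prod_{i=1}^{D} \Lambda'_{b_i,W}(x+R_i(\vec m)) \;-\; 1 \;=\; \sum_{k=1}^{D} \bigl(\Lambda'_{b_k,W}-1\bigr)(x+R_k(\vec m))\prod_{i<k} \Lambda'_{b_i,W}(x+R_i(\vec m)), $$
I reduce the desired asymptotic to showing that, for each fixed $k\in\{1,\dots,D\}$, the average of the $k$-th summand over $x\in\Z/N\Z$ and $\vec m\in[M_0]^r$ is $o(1)$.

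For each fixed $k$, I will apply Theorem \ref{l2} after relabeling the polynomials so that $R_k$ plays the role of the distinguished polynomial $R_1$ in that theorem, with the distinguished function $f_1 = \Lambda'_{b_k,W}-1$ and the remaining factors $f_i = \Lambda'_{b_i,W}$ for $i<k$. The pointwise bound $|f_i|\ll \Lambda'_{b_i,W}+1$ is immediate for each $i$, and the degree hypothesis of Theorem \ref{l2} holds with $d_0=d$ since by assumption $R_i-R_k$ has degree exactly $d$ for every $i\neq k$. Theorem \ref{l2} then yields
$$ \left|\E_{x\in\Z/N\Z}\E_{\vec m\in[M_0]^r}(\Lambda'_{b_k,W}-1)(x+R_k(\vec m))\prod_{i<k}\Lambda'_{b_i,W}(x+R_i(\vec m))\right| \;\ll\; \|\Lambda'_{b_k,W}-1\|^{c}\,+\,o(1), $$
where $\|\cdot\|$ is an averaged box norm as in \eqref{sof} at scale $M=A^{-1}M_0$, with auxiliary polynomials $P_1,\dots,P_{D'}$ of degree exactly $d-1$ (since the range $[d_0-1,d-1]$ collapses to $\{d-1\}$ when $d_0=d$) and coefficients of size $A^{O(1)}$.

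I will then complete the proof by invoking the second (``special case'') part of Theorem \ref{l1}, applied with $d_0=d$, $f=\Lambda'_{b_k,W}$, and $f'=1$, which asserts precisely that $\|\Lambda'_{b_k,W}-1\|=o(1)$ in this normalization. Summing the resulting $o(1)$ bounds over the $D$ telescoped terms will yield the desired $1+o(1)$ asymptotic. The main technical subtlety I expect is matching scales between the output of Theorem \ref{l2} and the hypotheses of Theorem \ref{l1}: the norm produced by Theorem \ref{l2} lives at scale $M=A^{-1}M_0\geq A^{-2}N^{1/d}$, whereas Theorem \ref{l1}'s special case demands $M\geq (A^{-1}N)^{1/d}$, and the coefficient bound $A^{O(1)}$ on the $P_j$ produced by Theorem \ref{l2} must likewise fit inside the coefficient bound $A$ of Theorem \ref{l1}'s setup. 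Both issues are handled by choosing the free parameter $\kappa$ in Section \ref{agu} sufficiently small (equivalently, enlarging $A=W^{1/\kappa}$ so that powers of $A$ of any fixed order remain admissible), so that all the scale and coefficient requirements of Theorem \ref{l1} are met simultaneously.
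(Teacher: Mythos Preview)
Your proof is correct and follows essentially the same approach as the paper: the paper writes $\Lambda'_{b_i,W} = 1 + (\Lambda'_{b_i,W}-1)$ and expands into $2^D-1$ terms, while you use a telescoping sum into $D$ terms, but in both cases each summand has a distinguished factor $\Lambda'_{b_k,W}-1$ to which Theorem~\ref{l2} (with $d_0=d$) and then the special case of Theorem~\ref{l1} are applied exactly as you do. Your explicit remark about absorbing the $A^{O(1)}$ coefficient bounds and the scale $M=A^{-1}M_0$ by shrinking $\kappa$ is a point the paper leaves implicit.
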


\begin{proof}  The claim is equivalent to
$$ \E_{x \in \Z/N\Z} \E_{\vec m \in [M_0]^r} \left(\prod_{i=1}^D \Lambda'_{b_i,W}( x + R_i( \vec m ) )\right) - 1 = o(1).$$
By writing $\Lambda'_{b_i,W} = 1 + (\Lambda'_{b_i,W}-1)$, one can expand the left-hand side as the sum of $2^D-1$ terms, each of which is bounded in magnitude by $O( \| \Lambda'_{b_i,W}-1\|^c ) + o(1)$ for some $i$ and some fixed $c>0$ by Theorem \ref{l2} (after permuting the indices), where the norm is of the form \eqref{sof} with $d_0=d$.  On the other hand, from Theorem \ref{l1} we have $\| \Lambda'_{b_i,W}-1\| = o(1)$.  The claim follows.
\end{proof}

With this corollary and the ``$W$-trick'' (as in \cite[\S 5]{gt-linear}), we can now prove Theorem \ref{main}.  Let $d,r,k,P_1,\dots,P_r, M()$ be as in Theorem \ref{main}.  We let $d,r,D,W,A,R,N,N'$ be as in previous sections, with $d,r$ as previously chosen and $D$ set equal to $k$.  By replacing $N$ by $N'$, and using the convergence of the infinite product $\prod_p \beta_p$ and the fact that $w$ goes to infinity, it suffices to show that
$$
\E_{n' \in [N']} \E_{m' \in [M']^r} \Lambda(n'+P_1(m')) \dots \Lambda(n'+P_k(m')) = \prod_{p \leq w} \beta_p + o(1)$$
where $M' \coloneqq  M(N')$.  

The contribution to the left-hand side of the case when one of the $n' + P_i(m')$ is a prime power (rather than a prime) can easily be seen to be $o(1)$ (in fact one obtains a power savings in $N'$), so we may replace $\Lambda$ by its restriction $\Lambda'$ to the primes without loss of generality.

We split $n'$ and $m'$ into residue classes $n' = b\ (W)$ and $m' = c\ (W)$ for $b \in [W]$ and $c \in [W]^r$.  Call a pair $(b,c)$ \emph{admissible} if $b + P_i(c)$ is coprime to $W$ for all $i=1,\dots,k$.  From the Chinese remainder theorem, we see that the number of admissible pairs is $W^{r+1} \left(\frac{\phi(W)}{W}\right)^k \prod_{p \leq w} \beta_p$.  For inadmissible $(b,c)$, the quantity $\Lambda'(n'+P_1(m')) \dots \Lambda'(n'+P_k(m'))$ is only non-vanishing when one of the $n'+P_i(m')$ is a prime $p \leq w$.  It is not difficult to see that the total contribution of such a case is $o(1)$ if $w$ is sufficiently slowly growing with respect to $N'$.  Thus we may restrict attention to admissible pairs $(b,c)$.  Approximating the arithmetic progression $\{ n' \in [N']: n' = b\ (W)\}$ by $\{ Wn+b: n \in [N]\}$, and similarly approximating $\{ m' \in [M']^r: m' = c\ (W)\}$ by $\{ Wm + c: m \in [M]^r\}$ with $M \coloneqq  \lfloor M'/W \rfloor$ (using crude estimates to bound the error in this approximation by multiplicative and additive errors of $o(1)$, assuming $w$ sufficiently slowly growing), it will thus suffice to show that
\begin{equation}\label{sad}
\E_{n \in [N]} \E_{m \in [M]^r} \Lambda'(Wn+b+P_1(Wm+c)) \dots \Lambda'(Wn+b+P_k(Wm+c)) = \left(\frac{W}{\phi(W)}\right)^k (1 + o(1))
\end{equation}
uniformly for all admissible pairs $(b,c)$.

As $(b,c)$ is admissible, we can write
$$ Wn + b + P_i(Wm+c) = W(n + R_i(m)) + b_i$$
for some $b_i \in [W]$ coprime to $W$, and some polynomial $R_i \in \Z[\m_1,\dots,\m_r]$ of degree $d$ with all coefficients bounded in magnitude by $A$.  Since the $P_i - P_j$ all had degree $d$, the $R_i - R_j$ do also.  Recalling that $M = o(N^{1/d})$, we see that the quantities $n + R_i(m)$ will lie in the interval $[R,N]$ unless $n = o(N)$ or $n = N - o(N)$.  The contribution of these latter cases to \eqref{sad} can easily be verified to be $o( (\frac{W}{\phi(W)})^k )$ by any standard upper bound sieve (e.g. the Selberg sieve, or the ``fundamental lemma of sieve theory'', see e.g. \cite[Theorem 6.12]{fi}).  Using \eqref{bawn}, we thus see that \eqref{sad} is equivalent to the estimate
$$
\E_{n \in \Z/N\Z} \E_{m \in [M]^r} \Lambda'_{b_1,W}(n+R_1(m)) \dots \Lambda'_{b_k,W}(n+R_k(m)) = 1 + o(1).$$
But this follows from Corollary \ref{l2-cor} (noting that the lower bound on $M'$ will imply that $M \geq A^{-1} N^{1/d}$ if $\omega$ is going to zero sufficiently slowly).  This concludes the proof of Theorem \ref{main}.

We now adapt the above arguments to prove Theorem \ref{nn2}.  Repeating the above arguments all the way up to \eqref{sad}, we arrive at the task of showing that
$$
\E_{n \in [N]} \E_{m \in [M]} \Lambda'(Wn+b) \Lambda'(Wn+b+(Wm+c)) \Lambda'(Wn+b+P_3(Wm+c)) = \left(\frac{W}{\phi(W)}\right)^3 (1 + o(1))
$$
uniformly for $(b,c) \in [W]^2$ with $b, b+c, b+P_3(c)$ coprime to $W$, with $M = o(\sqrt{N})$ and $M \geq \omega(N) N$ for some function $\omega(N)$ that goes to zero sufficiently slowly.  Continuing the above arguments, we then reduce to showing that
$$
\E_{n \in \Z/N\Z} \E_{m \in [M]} \Lambda'_{b_1,W}(n) \Lambda'_{b_2,W}(n + R_2(m)) \Lambda'_{b_3,W}(n + R_3(m)) = 1 + o(1) $$
where $b_1,b_2,b_3 \in [W]$ are given by the congruences
\begin{align*}
b_1 &= b\ (W) \\
b_2 &= b+c\ (W) \\
b_3 &= b+P_3(c)\ (W)
\end{align*}
and $R_2,R_3$ are the polynomials
\begin{align*}
R_2 &\coloneqq  \m + \frac{b+c-b_2}{W} \\
R_3 &\coloneqq  \frac{P_3(W\m+c) - b_3}{W}.
\end{align*}
Using Theorem \ref{l2}, we see that
$$
\E_{n \in \Z/N\Z} \E_{m \in [M]} \Lambda'_{b_1,W}(n) \Lambda'_{b_2,W}(n + R_2(m)) (\Lambda'_{b_3,W}(n + R_3(m))-1) = o(1) $$
so it suffices to show that
$$
\E_{n \in \Z/N\Z} \E_{m \in [M]} \Lambda'_{b_1,W}(n) \Lambda'_{b_2,W}(n + R_2(m)) = 1+o(1) $$
which on reversing some of the arguments following \eqref{sad} is equivalent to
$$ 
\E_{n \in [N]} \E_{m \in [M]} \Lambda'(Wn+b) \Lambda'(Wn+b+(Wm+c)) = \left(\frac{W}{\phi(W)}\right)^2 (1+o(1)).$$
Set $M_0 \coloneqq  \lfloor M \log^{-10} N \rfloor$.  Using crude bounds on $\Lambda'$, we may replace the average $[M]$ by $[M]-[M_0]$ with negligible error, and then by shifting $n$ by an element of $M_0$ and incurring a further negligible error, we may reduce to showing that
$$ 
\E_{n \in [N]} \E_{h \in [M_0]} \E_{m \in [M]} \Lambda'(W(n+h)+b) \Lambda'(Wn+b+(Wm+c)) = \left(\frac{W}{\phi(W)}\right)^2 (1+o(1)).$$
The left-hand side factors as
$$ 
\E_{n \in [N]} (\E_{h \in [M_0]} \Lambda'(W(n+h)+b)) (\E_{m \in [M]} \Lambda'(Wn+b+(Wm+c))).$$
From the prime number theorem in arithmetic progressions we have
$$ 
\E_{n \in [N]} \E_{m \in [M]} \Lambda'(Wn+b+(Wm+c)) = \left(\frac{W}{\phi(W)}\right) (1+o(1))$$
if $w$ is sufficiently slowly growing, so it suffices to show that
$$ \E_{n \in [N]} (\E_{h \in [M_0]} \frac{\phi(W)}{W} \Lambda'(W(n+h)+b)-1) (\frac{\phi(W)}{W} \E_{m \in [M]} \Lambda'(Wn+b+(Wm+c))) = o(1).$$
From the Brun-Titchmarsh inequality, the expression $\frac{\phi(W)}{W} \E_{m \in [M]} \Lambda'(Wn+b+(Wm+c))$ is bounded by $O(1)$, so by the Cauchy-Schwarz inequality it suffices to show that
$$ \E_{n \in [N]} \left(\E_{h \in [M_0]} \frac{\phi(W)}{W} \Lambda'(W(n+h)+b)-1\right)^2 = o(1).$$
Since $d \leq 5$, we have $M_0 \geq N^{1/6+\eps}$, and the above claim then follows from standard zero density estimates (see e.g. \cite[Theorem 1.1]{kou}).  On the generalized Riemann hypothesis\footnote{In fact it suffices to assume the generalized density hypothesis.}, one can obtain this claim for $M_0$ as low as $N^\eps$, and then no restriction on $d$ is necessary; again, see \cite[Theorem 1.1]{kou}.

\begin{remark}  The same method lets us handle a triplet of polynomials $P_1,P_2,P_3 \in \Z[\m]$ in which $P_3-P_1$ has degree $d$ and $P_2-P_1$ has degree $k$ for some $1 \leq k < d$ with $k/d > 1/6$ (and the hypothesis $k/d>1/6$ can be omitted on the generalized Riemann or density hypothesis).  Indeed, the above arguments let us reduce to showing an estimate of the form 
$$ 
\E_{n \in [N]} \E_{m \in [M]} \Lambda'(Wn+b) \Lambda'(Wn+b+(P_2-P_1)(Wm+c)) = \left(\frac{W}{\phi(W)}\right)^2 (1+o(1)),$$
and a standard application of the circle method (using some Fourier restriction theorem for the von Mangoldt function on short intervals) lets us control this expression in turn by averages of $\Lambda$ on arithmetic progressions of spacing $O(W^{O(1)})$ and length roughly $N^{k/d}$, which can again be controlled by zero density estimates as before.  We leave the details to the interested reader.

One may in principle be able to handle some higher complexity patterns of this type, e.g. $P_1 = 0, P_2 = \m^k, P_3 = 2\m^k, P_4 = \m^d$ when $1 \leq k < d$ with $k/d$ sufficiently close to $1$.  Morally speaking, after using some suitable adaptation of the arguments in this paper and the (non-trivial) fact that the pattern $n, n+m^k, n+2m^k$ has ``true complexity'' $1$ in the sense of Gowers and Wolf \cite{gow-wolf}, the average corresponding to this set of polynomials should be controlled by an expression roughly of the form
$$ \E_{n \in [N]} \sum_{\alpha \in \R/\Z} |\E_{m \in [M^k]} (\Lambda'(W(n+m)+b)-1) e(\alpha m)| $$
and one would expect to be able to control this quantity when $k/d$ is large from existing analytic number theory methods; the best result in this direction we currently know of is by Zhan \cite{zhan}, who used moment bounds on $L$-functions to treat the case $k/d > 5/8$.    Again, we will not pursue the details of these arguments further here.
\end{remark}

\appendix

\section{The polynomial forms condition}\label{polyapp}

Throughout this appendix, the parameters $d,r,D,\kappa,W,A,R,N$ are as in Section \ref{agu}, with all quantities below allowed to depend on $d,r$.  We will now prove the various polynomial forms conditions required on the functions $\nu_b$, specifically Proposition \ref{gu}, Lemma \ref{ps2}, Lemma \ref{st-lemma}, Lemma \ref{gopw}, and showing that the quantity \eqref{task} is $1+o(1)$.

Our starting point is the following estimate on the $\nu_b$ from \cite{tz-pattern}, \cite{tz-narrow}:

\begin{theorem}[Polynomial forms condition]\label{poly}  Let $k,s,C$ be natural numbers not depending on $N$, let $b_1,\dots,b_s \in [W]$ be coprime to $W$, and let $P_1,\dots,P_k \in \Z[\m_1,\dots,\m_s]$ be distinct polynomials of degree at most $d$, with all non-constant coefficients of size at most $A^C$ in magnitude.  Assume that $\kappa$ is sufficiently small depending on $k,s$.  Assume also that the $P_i-P_j$ are non-constant for all $1 \leq i < j \leq s$.  Let $M_1,\dots,M_s$ be quantities with $\log^{1/\kappa} N \leq M_1,\dots,M_s \leq N$.  Then
$$ \E_{x \in \Z/N\Z} \E_{m_1 \in [M_1],\dots,m_s \in [M_s]} \prod_{i=1}^k \nu_{b_i}(x + P_i(m_1,\dots,m_s)) = 1+o(1).$$
\end{theorem}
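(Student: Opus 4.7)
The plan is to adapt the polynomial forms condition proofs of \cite{tz-pattern} and \cite{tz-narrow} to the present range $\log^{1/\kappa} N \leq M_j \leq N$ for the averaging scales. First, I would expand the squares in each $\nu_{b_i}$ using \eqref{nu-def} to rewrite the left-hand side as
\begin{equation*}
 \left(\tfrac{\phi(W)}{W}\log R\right)^k \sum_{n_i, n_i' \leq R} \prod_{i=1}^k \mu(n_i)\mu(n_i') \chi\!\left(\tfrac{\log n_i}{\log R}\right)\chi\!\left(\tfrac{\log n_i'}{\log R}\right) \cdot T(\vec D),
\end{equation*}
where $D_i \coloneqq [n_i, n_i']$ and
$$ T(\vec D) \coloneqq \E_{x \in \Z/N\Z} \E_{\vec m \in \prod_j [M_j]} \prod_{i=1}^k 1_{D_i \mid W(x + P_i(\vec m)) + b_i}, $$
and then apply the contour integral representation of $\chi(\log n/\log R)$ from \cite[\S 8]{gt-primes} to convert the $\mu$-weighted sum into a rapidly convergent integral over complex parameters, of Dirichlet-type series that factor as Euler products.

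The second step is to evaluate $T(\vec D)$ for each fixed tuple of moduli. The $x$-average can be completed to $\Z/\operatorname{lcm}(D_i)\Z$ at cost $O(\operatorname{lcm}(D_i)/N) = O(N^{-1 + 2k\kappa})$, and the remaining average over $\vec m$ factors by the Chinese Remainder Theorem over primes $p \mid \prod_i D_i$ into a product of local densities
\begin{equation*}
 \beta_p(S) = \E_{x \in \Z/p\Z} \E_{\vec m \in (\Z/p\Z)^s} \prod_{i \in S} 1_{W(x + P_i(\vec m)) + b_i \equiv 0\,(p)}
\end{equation*}
for $S \coloneqq \{i : p \mid D_i\}$. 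The hypothesis that the differences $P_i - P_j$ are non-constant, combined with standard polynomial congruence counting (Lang--Weil / Weyl-type bounds), gives $\beta_p(S) = p^{-|S|}(1 + O_{k,s,C}(1/p))$ for all but $O_{k,s,C}(1)$ exceptional primes, the exceptional primes contributing only a bounded multiplicative factor. Recombining via the contour integral and using the normalization $\int_0^1 |\chi'(t)|^2\,dt = 1$ (as in \cite[Proposition 9.1]{gt-primes}) collapses the answer to $1 + o(1)$.

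The main obstacle is controlling the completion of the $\vec m$-averages uniformly across the full range of $M_j$, since the $P_i$ can have degree $d$ while $M_j$ may be as small as $\log^{1/\kappa} N$. In this small-$M_j$ regime the moduli $D_i$ can far exceed $M_j^d$, so one cannot just split $[M_j]$ into complete residue classes modulo $D_i$; instead one bounds the incomplete polynomial sums using Lemma \ref{pop} (the polynomial equidistribution lemma established earlier in this paper), together with the divisor bound to absorb the exceptional divisor tuples. This is exactly the step that forces $\kappa$ to be chosen small depending on $k$ and $s$. The large-$M_j$ end is comparatively routine: completion errors are $o(1)$ by direct smoothing once $M_j$ is polynomially large in $N$, so extending beyond the regime of \cite{tz-narrow} costs essentially nothing new, and the estimate follows.
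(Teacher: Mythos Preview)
Your high-level plan (expand each $\nu_{b_i}$, pass to a contour integral, factor into local densities) matches the Goldston--Y{\i}ld{\i}r{\i}m style computation carried out in \cite[\S 10--11]{tz-pattern} and \cite{tz-narrow}, which is exactly what the paper invokes: the paper's proof is essentially a citation of those two results, plus the remark that replacing $WP_j+b$ by $WP_j+b_j$ throughout (and modifying the definition of ``terrible'' prime accordingly) costs nothing. So structurally you are aligned with the intended argument.

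The gap is in your treatment of the small-$M_j$ regime. You propose to control the incomplete $\vec m$-average via Lemma~\ref{pop}, but that lemma gives savings only of size $M_j^{-c}$, which is merely polylogarithmic in $N$ when $M_j \sim \log^{1/\kappa} N$. If one Fourier-expands the congruence conditions modulo the $D_i$ (which is what ``bounding incomplete polynomial sums'' would entail), the number of nonprincipal frequencies one must sum over is as large as $\prod_i D_i \leq R^{2k} = N^{2k\kappa}$, a genuine power of $N$; polylogarithmic Weyl savings cannot absorb this, and the divisor bound does not rescue the situation. So the proposed mechanism does not close.

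The actual route taken in \cite{tz-narrow} (and hence by the paper via citation) does not try to complete the $\vec m$-average at all. Instead, one first performs the $x$-average and the divisor sums for each \emph{fixed} $\vec m$; this yields, for each $\vec m$, an Euler product that deviates from the main term only at the ``terrible'' primes $p>w$ dividing some $W(P_i-P_j)(\vec m)+(b_i-b_j)$. Because these are values of a fixed nonconstant polynomial, the correction factor is bounded on average over $\vec m$ by elementary arithmetic arguments (of correlation--condition type as in \cite{gt-primes,tz-pattern}), and no equidistribution of $\vec m$ modulo primes up to $R$ is ever required. That is the content behind the paper's remark that the terrible primes ``only contribute a multiplicative factor of $O(1)$ to the error.''
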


\begin{proof}  In the case $b_1=\dots=b_s$ (and when the constant coefficients of the $P_i$ are also bounded in magnitude by $A^C$), this follows directly from \cite[Corollary 11.2]{tz-pattern} in the case when the $M_1,\dots,M_s$ are bounded below by (say) $N^\kappa$, and from \cite[Proposition 3]{tz-narrow} in the general case\footnote{Strictly speaking, \cite[Proposition 3]{tz-narrow} only claims the case when $M_1=\dots=M_s = \log^L N$ for some sufficiently large $L$ depending on $k,s$, but the arguments easily extend to larger values of $M_1,\dots,M_S$.}.  The arguments in \cite[\S 10, 11]{tz-pattern} used to prove \cite[Corollary 11.2]{tz-pattern} or \cite[Proposition 3]{tz-narrow} can be easily modified to handle the case when the more general case when the $b_i$ are permitted to be distinct and the constant coefficients are permitted to be large, after replacing every occurrence of $W P_j + b$ with $W P_j + b_j$ in these arguments.  (The notion of a ``terrible'' prime has to then be modified to be a prime $p>w$ that divides $(WP_i + b_i) - (WP_j + b_j)$ for some $1 \leq i < j \leq s$, rather than just dividing $P_i - P_j$; however, these polynomials $(WP_i + b_i) - (WP_j + b_j)$ are non-constant with all non-constant coefficients $O( W A^C )$, and the arguments used to prove \cite[Corollary 11.2]{tz-pattern} or \cite[Proposition 3]{tz-narrow} still show that the total contribution of the terrible primes only contributes a multiplicative factor of $O(1)$ to the error.
\end{proof}

Lemma \ref{ps2} then follows from this theorem (in the $b_1=\dots=b_s=b$ case), as after expanding out the fourth power one obtains a sum of $4^{2^D-1}$ expressions, all of which are $1+o(1)$ thanks to Theorem \ref{poly} (with $s=4(r+D)$, and $k$ at most $4 (2^D-1)$).  Proposition \ref{gu} similarly follows from this theorem (in the $b_1=\dots=b_s=b$ case), since on expanding out the left-hand side of \eqref{gu-1}, one obtains an alternating sum of $2^{2^{D_*+2D}}$ terms, all of which are $1+o(1)$ thanks to Theorem \ref{poly} (with $s = 2r + D_*+2D$, and $k$ at most $2^{D_*+2D}$).  In both of these cases, a direct inspection reveals that the polynomials $P_i$ used in the invocation of Theorem \ref{poly} have non-constant differences $P_i-P_j$.

In a similar vein, Lemma \ref{gopw} follows from Theorem \ref{poly} (now with the $b_i$ all distinct), as the left-hand side of \eqref{gopw} expands as an expression of the form considered by Theorem \ref{poly} (with $s = D_{\mathcal S} + r + 2r (|{\mathcal A}_l|-1)$ and $k = 2^{|{\mathcal A}_l|-1}$).  Similarly, for Lemma \ref{st-lemma}, the left-hand side of \eqref{st1} expands as $2^{|{\mathcal A}_0|}$ terms, all of which are $1+o(1)$ by Theorem \ref{poly} (with $s = D_{\mathcal S}$ and $k$ at most $|{\mathcal A}_0|$), and the left-hand side of \eqref{st2} similarly expands as the sum of $2^{|{\mathcal A}_0|+2|{\mathcal A}_1|}$ terms, which are again $1+o(1)$ by Theorem \ref{poly} (with $s = D_{\mathcal S} + 2r$ and $k$ at most $|{\mathcal A}_0| + 2 |{\mathcal A}_1|$); it is in this latter case that we need to permit the non-constant coefficients of the polynomials $P_i$ in Theorem \ref{poly} to be larger than $A^C$ in magnitude.  As before, an inspection of the polynomials involved (using the fact that the $R_\alpha-R_\beta$ are non-constant) shows that the $P_i-P_j$ are non-constant.

Finally, the expression \eqref{task} is $1+o(1)$ by an application of Theorem \ref{poly} with $s = D_{\mathcal S} + 2r |{\mathcal A}_l| + 2r$ and $k = |{\mathcal A}_{nl}| 2^{|{\mathcal A}_l|} + |{\mathcal A}_l| 2^{|{\mathcal A}_l|-1}$.  (Again, by focusing on the behavior with respect to the $\vec m^{(i)}$ variables, setting all other variables to zero, one can use the hypotheses on the $R_\alpha -R_\beta$ to show that the polynomials $P_i-P_j$ are non-constant.)

\end{document}